\documentclass[a4paper,10pt,english,french]{smfart}
\usepackage[frenchb,english]{babel}
\usepackage[T1]{fontenc}
\usepackage{lmodern}
\usepackage{graphicx}
\usepackage{amsmath,amsthm,amssymb,amsfonts}
\usepackage[a4paper,left=4cm,right=4cm,top=4cm,bottom=4cm]{geometry}
\numberwithin{equation}{section}
\title[Manin's conjecture for a singular cubic surface]{Manin's conjecture for a cubic surface \\ with
$2 \mathbf{A}_2 + \mathbf{A}_1$ singularity type}
\author{Pierre Le Boudec}
\subjclass{$11$D$45$, $14$G$05$}
\keywords{Rational points, Manin's conjecture, cubic surfaces, universal \text{torsors}}
\address{Université Denis Diderot (Paris VII) \\ Institut de Mathématiques de Jussieu \\ UMR 7586 \\ Case $7012$ - Bâtiment Chevaleret \\ Bureau $7$C$14$ \\ $75205$ Paris Cedex 13}
\email{pleboude@math.jussieu.fr}

\begin{document}

\makeatletter
\def\imod#1{\allowbreak\mkern10mu({\operator@font mod}\,\,#1)}
\makeatother

\newtheorem{lemma}{Lemma}
\newtheorem{theorem}{Theorem}
\newtheorem{corollaire}{Corollaire}
\newtheorem{proposition}{Proposition}

\newcommand{\vol}{\operatorname{vol}}
\newcommand{\D}{\mathrm{d}}
\newcommand{\rank}{\operatorname{rank}}
\newcommand{\Pic}{\operatorname{Pic}}
\newcommand{\Gal}{\operatorname{Gal}}
\newcommand{\meas}{\operatorname{meas}}
\newcommand{\Spec}{\operatorname{Spec}}
\newcommand{\eff}{\operatorname{eff}}

\begin{abstract}
We establish Manin's conjecture for a cubic surface split over $\mathbb{Q}$ and whose singularity type is $2 \mathbf{A}_2 + \mathbf{A}_1$. For this, we make use of a deep result about the equidistribution of the values of a certain restricted divisor function in three variables in arithmetic progressions. This result is due to Friedlander and Iwaniec \cite{MR786351} (and was later improved by Heath-Brown \cite{MR866901}) and draws on the work of Deligne \cite{MR0340258}.
\end{abstract}

\maketitle

\tableofcontents

\section{Introduction}

At the end of the eighties, Manin and his collaborators initiated a program aiming to investigate the distribution of rational points on Fano varieties (see \cite{MR974910}) and they gave a precise conjecture concerning the asymptotic behaviour of the number of rational points of bounded height. In this paper we focus on the case of singular del Pezzo surfaces of degree three defined over $\mathbb{Q}$. In order to precisely state the conjecture in this case, we introduce the exponential height
$H : \mathbb{P}^3(\mathbb{Q}) \to \mathbb{R}_{> 0}$ which is defined for a vector $(x_0, x_1, x_2, x_3) \in \mathbb{Z}^{4}$ subject to the condition
$\gcd(x_0, x_1, x_2, x_3) = 1$ by
\begin{eqnarray*}
H(x_0: x_1 : x_2 : x_3) & = & \max \{ |x_0|, |x_1|, |x_2|, |x_3| \}\textrm{.}
\end{eqnarray*}
Let $V \subset \mathbb{P}^3$ be a singular cubic surface defined over $\mathbb{Q}$. The variety $V$ contains a positive number of lines where the rational points accumulate hiding the distribution of the rational points on the complement of the lines. To surpass this phenomenon, we let $U$ be the open subset formed by removing the lines from $V$ and we define the quantity
\begin{eqnarray*}
N_{U,H}(B) & = & \# \{x \in U(\mathbb{Q}), H(x) \leq B \} \textrm{.}
\end{eqnarray*}
It is the number of rational points on $V$ which do not lie on any line and whose height is bounded by a quantity $B$ which has to be thought as tending to infinity. If $\widetilde{V}$ denotes the minimal desingularization of $V$ and $\rho = \rho_{\widetilde{V}}$ the rank of its Picard group, then it is expected that
\begin{eqnarray*}
N_{U,H}(B) & = & c_{V,H} B \log(B)^{\rho - 1} (1+o(1)) \textrm{,}
\end{eqnarray*}
where $c_{V,H}$ is a constant whose value is expected to follow Peyre's prediction \cite{MR1340296}. In comparison, it is easy to see that the number $N_{\mathbb{P}^1,H}(B)$ of rational points of bounded height lying on a line satisfies
$N_{\mathbb{P}^1,H}(B) = c_{\mathbb{P}^1,H} B^2 (1+o(1))$ where $c_{\mathbb{P}^1,H} > 0$. That is why we needed to exclude the rational points lying on the lines of $V$ from the counting function.

The classification of singular cubic surfaces is classical and goes back to Schläfli \cite{Schläfli} and Cayley \cite{Cayley}, they are simply categorized by their singularity types. This classification is described in a modern language in the work of Bruce and Wall \cite{MR533323}. Up to isomorphism over $\overline{\mathbb{Q}}$, there are twenty different singularity types (see
\cite[Table $5$]{D-hyper} for instance). Note that for some types, there can be several isomorphism classes of surfaces (and even infinite families). From now on, we restrict our attention to surfaces which are split over $\mathbb{Q}$ meaning that their singularities and the lines they contain are defined over $\mathbb{Q}$. Despite the growing interest borne to Manin's conjecture for del Pezzo surfaces, the conjecture has only been proved for three cubic surfaces of different types. This is due to the fact that there is no general method to check that a given variety satisfies the conjecture. However, there exist some general results asserting that the conjecture holds for certain large classes of varieties. For instance, Batyrev and Tschinkel have proved it for toric varieties \cite{MR1620682} and Chambert-Loir and Tschinkel for equivariant compactifications of vector groups \cite{MR1906155}. With this end in view, they study the height Zeta function of the variety
\begin{eqnarray*}
Z_{U,H}(s) & = & \sum_{x \in U(\mathbb{Q})} H(x)^{-s} \textrm{,}
\end{eqnarray*}
using harmonic analysis techniques in an adelic setting. It turns out that the surface having singularity type $3 \mathbf{A}_2$ and whose equation is
\begin{eqnarray*}
x_0^3 & = & x_1 x_2 x_3 \textrm{,}
\end{eqnarray*}
is toric and thus the result of Batyrev and Tschinkel covers this case. However, many authors have studied the quantity $N_{U,H}(B)$ for this particular surface (see \cite{MR1679838}, \cite{MR1679841}, \cite{MR1679839}, \cite{MR1656797} and \cite{MR2430199}) and have obtained stronger results. The best of these results is due to la Bretèche \cite{MR1679839} who has proved that the height Zeta function of this surface admits a meromorphic continuation on the left of the line $\Re(s) = 1$ and moreover that there exists a monic polynomial $P$ of degree $6 = \rho - 1$ and a constant $c > 0$ such that
\begin{eqnarray*}
N_{U,H}(B) & = & c_{V,H} B P(\log(B)) + O \left( B^{7/8} \exp \left( -c \mathcal{L}(B) \right) \right) \textrm{,}
\end{eqnarray*}
where $\mathcal{L}(B) = \log(B)^{3/5} \log(\log(B))^{-1/5}$. Manin's conjecture has also been proved for a cubic surface with $\mathbf{E}_6$ singularity type and whose equation is
\begin{eqnarray*}
x_1 x_2^2 + x_2 x_0^2 + x_3^3 & = & 0 \textrm{.}
\end{eqnarray*}
It was first proved by Derenthal in his doctoral thesis \cite{Der-th} (and independently by Joyce, also in his doctoral thesis
\cite{Joyce-th}) and then la Bretèche, Browning and Derenthal \cite{MR2332351} obtained a much stronger result. They proved that the height Zeta function of this surface can also be meromorphically continued on the left of $\Re(s) = 1$ and that there exists a monic polynomial $Q$ of degree $6$ such that for any fixed $\varepsilon > 0$,
\begin{eqnarray}
\label{polynomial}
N_{U,H}(B) & = & c_{V,H} B Q(\log(B)) + O \left( B^{10/11 + \varepsilon} \right) \textrm{.}
\end{eqnarray}
Finally, Browning and Derenthal \cite{MR2520769} have obtained Manin's conjecture for a surface having singularity type $\mathbf{D}_5$ and whose equation is
\begin{eqnarray*}
x_3 x_0^2 + x_0 x_2^2 + x_1^2 x_2 & = & 0 \textrm{.}
\end{eqnarray*}

It is a general expectation that it is easier to get a good understanding of the asymptotic behaviour of $N_{U,H}(B)$ when the surface has a \textit{strong singularity type}. According to this principle, it seems hard to reach a result similar to \eqref{polynomial} for the latter surface or any other having a \textit{weaker singularity type}. To support this heuristic fact, we can note that the only results which are available for certain \textit{less singular} cubic surfaces are lower and upper bounds of the expected order of magnitude. To be more precise, for Cayley's cubic surface which has singularity type $4 \mathbf{A}_1$ and whose equation is
\begin{eqnarray*}
x_0 x_1 x_2 + x_0 x_1 x_3 + x_0 x_2 x_3 + x_1 x_2 x_3 & = & 0 \textrm{,}
\end{eqnarray*}
Heath-Brown \cite{MR2075628} has proved that
\begin{eqnarray*}
N_{U,H}(B) & \asymp & B \log(B)^6 \textrm{,}
\end{eqnarray*}
meaning that the ratio of these two quantities is between two constants. In addition, Browning \cite{MR2250046} has obtained exactly the same result for a surface having a $\mathbf{D}_4$ singularity and which is given by
\begin{eqnarray*}
x_0 (x_1 + x_2 + x_3)^2 - x_1 x_2 x_3 & = & 0 \textrm{.}
\end{eqnarray*}

The proofs of all these results are intrinsically very different from the proof of Batyrev and Tschinkel for toric varieties. They all use a passage to universal torsors. This consists in defining a bijection between the set of rational points to be counted on $U$ and a certain set of integral points of an affine variety of higher dimension, which is equal to nine for cubic surfaces. In the cases for which the universal torsors are hypersurfaces, Derenthal has calculated their equations for all singular cubic surfaces (see
\cite{D-hyper}). This task is achieved determining the total coordinate ring associated to the minimal desingularizations of the surfaces using a method of Hassett and Tschinkel \cite{MR2029868}. However, this step of the proof can also be carried out using elementary techniques (see section \ref{torsor section} for an example).

The aim of this paper is to prove Manin's conjecture for another cubic surface split over $\mathbb{Q}$ and having singularity type $2 \mathbf{A}_2 + \mathbf{A}_1$. This surface $V \subset \mathbb{P}^3$ contains five lines and is defined by
\begin{eqnarray*}
x_3^2 (x_1 + x_3) + x_0 x_1 x_2 & = & 0 \textrm{.}
\end{eqnarray*}
The lines on $V$ are given by $x_i = x_3 = 0$ and $x_j = x_1 + x_3 = 0$ for $i \in \{0, 1, 2 \}$ and $j \in \{0, 2 \}$. Its three singularities are $(0:1:0:0)$, $(1:0:0:0)$ and $(0:0:1:0)$. It is easy to see that the first has type $\mathbf{A}_1$ and the two others have type $\mathbf{A}_2$. We also see that $V$ is actually split over $\mathbb{Q}$ and thus, if $\widetilde{V}$ denotes the minimal desingularization of $V$, the Picard group of $\widetilde{V}$ has rank $\rho = 7$. The open subset $U$ and the quantity $N_{U,H}(B)$ we want to investigate are defined as explained above. As already said, in section \ref{torsor section}, we define a bijection between the set of the points we aim to count on $U$ and a certain set of integral points of an open subset of the affine hypersurface embedded in $\mathbb{A}^{10} \simeq \Spec \left( \mathbb{Q}[\eta_1, \dots, \eta_{10}] \right)$ and defined by
\begin{eqnarray*}
\eta_1 \eta_6 \eta_8 + \eta_3 \eta_5 \eta_7^2 + \eta_9 \eta_{10} & = & 0 \textrm{.}
\end{eqnarray*}

The first step of the proofs of Manin's conjecture for the $\mathbf{E}_6$ and the $\mathbf{D}_5$ cubic surfaces mentioned above consists in summing over two variables seeing the torsor equation as a congruence and counting the number of integers in a prescribed region and subject to this congruence. It seems highly unlikely that this method turns out to be efficient in our case and to overcome this obstacle, we start by summing over four variables at once. Note that starting by summing over more than two variables has already proved to be an efficient strategy to count integral points on universal torsors (for example in \cite{3A1} and far more strikingly in \cite{dP2E7}).

To make our proof work, we need a deep result of Friedlander and Iwaniec \cite[section $3$, proposition $1$]{MR786351} (restated in lemma \ref{Fri-Iwa lemma}) concerning the distribution of the values of a certain restricted divisor function in arithmetic progressions. This function is similar to the divisor function $\tau_3 := \tau \ast 1$ where $\tau$ denotes the usual divisor function, apart from the fact that the divisors counted have to lie in a prescribed region of~$\mathbb{R}^3$. To reach this result, Friedlander and Iwaniec combine the use of the work of Deligne \cite{MR0340258} to deal with some complete exponential sums with some other ideas to analyze incomplete Kloosterman sums, ideas which were first developed by Burgess in the context of sums of multiplicative characters \cite{MR0132732}. It is certainly worth underlining that using a bound for two-dimensional Kloosterman sums consequence of Deligne's work in the most obvious way, as in the previous work of the author \cite[Lemma $1$]{3A1} where Weil's bound for usual Kloosterman sums is used, would not have been sufficient for our purpose. We have decided to use the result of Friedlander and Iwaniec because it was stated as we needed but it is important to note that the work of Heath-Brown \cite{MR866901} might have been used instead. Even slightly better, his result would not have improved our final error term. Our main result is the following.

\begin{theorem}
\label{Manin}
As $B$ tends to $+ \infty$, we have the estimate
\begin{eqnarray*}
N_{U,H}(B) & = & c_{V,H} B \log(B)^{6} \left( 1 + O \left( \frac{\log(\log(B))}{\log(B)} \right) \right) \textrm{,}
\end{eqnarray*}
where $c_{V,H}$ matches Peyre's prediction.
\end{theorem}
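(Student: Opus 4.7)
The plan is to follow the universal torsor approach. Using the bijection set up in section \ref{torsor section}, I would first rewrite $N_{U,H}(B)$ as a sum over ten-tuples of integers $(\eta_1, \dots, \eta_{10})$ satisfying the torsor equation
$$\eta_1 \eta_6 \eta_8 + \eta_3 \eta_5 \eta_7^2 + \eta_9 \eta_{10} = 0,$$
together with suitable coprimality conditions among the $\eta_i$ and lying in a region $\mathcal{R}(B) \subset \mathbb{R}^{10}$ prescribed by the height. The two variables $\eta_2, \eta_4$ which do not appear in the torsor equation are summed over first, contributing clean lattice-point-in-an-interval factors.

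Following the strategy announced in the introduction, I would \emph{not} reduce the torsor equation to a congruence in a single variable (as was done in the $\mathbf{E}_6$ and $\mathbf{D}_5$ cases). Instead, I would fix the outer variables $\eta_3, \eta_5, \eta_7, \eta_9, \eta_{10}$ and view the equation as
$$\eta_1 \eta_6 \eta_8 = -\eta_3 \eta_5 \eta_7^2 - \eta_9 \eta_{10}.$$
For these fixed outer variables, counting the triples $(\eta_1, \eta_6, \eta_8)$ in their prescribed boxes is precisely a restricted triple divisor count $\tau_3^{\mathrm{res}}(N)$, where $N := -\eta_3\eta_5\eta_7^2 - \eta_9\eta_{10}$. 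The crucial point is that, as one of the outer variables (say $\eta_{10}$) varies, $N$ runs through an arithmetic progression whose modulus is a product of the remaining outer variables; combining this with the summation over $\eta_9$ or $\eta_{10}$ gives a genuine sum of $\tau_3^{\mathrm{res}}$ over four variables at once. Coprimality conditions are handled by Möbius inversion, taking care that the resulting moduli remain within the uniformity range required downstream.

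The technical heart of the argument is then the application of the Friedlander--Iwaniec estimate (lemma \ref{Fri-Iwa lemma}) to this four-variable sum: it furnishes a main term of the expected shape together with a power-saving error valid in sufficient uniformity in the AP modulus. After substituting, the remaining outer summations over $\eta_3, \eta_5, \eta_7$ and the surviving one of $\eta_9, \eta_{10}$ are carried out by standard techniques (hyperbola trick, partial summation, manipulation of Dirichlet series with good analytic properties) to convert the accumulated local densities into an Euler product. The error term management at each stage should yield the overall loss $\log\log(B)/\log(B)$ announced in the statement.

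The main obstacle is the Friedlander--Iwaniec step: one must verify the uniformity hypotheses of their proposition in the moduli and in the ranges of the three divisors arising from the geometry of the torsor, and one must propagate the resulting error through the remaining multidimensional summations without overwhelming the main term. The final identification of the leading constant with Peyre's prediction is then a routine, though lengthy, verification using the description of the Picard lattice, the effective cone, and the local densities of $\widetilde{V}$: the arithmetic factor assembled from the Möbius-inverted local conditions matches the product of local $\omega_p$-densities, the real density matches the computed volume of $\mathcal{R}(B)$, and the combinatorial volume of the effective cone matches Peyre's $\alpha(\widetilde{V})$, so that $c_{V,H} = \alpha(\widetilde{V})\,\beta(\widetilde{V})\,\omega_H(\widetilde{V})$ with $\beta(\widetilde{V}) = 1$ since $V$ is split over $\mathbb{Q}$.
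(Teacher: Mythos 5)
Your sketch matches the paper's strategy: pass to the universal torsor, handle the torsor equation by counting triples $(\eta_1,\eta_6,\eta_8)$ satisfying $\eta_1\eta_6\eta_8 \equiv -\eta_3\eta_5\eta_7^2 \pmod{\eta_{10}}$ with $\eta_9$ then determined, apply the Friedlander--Iwaniec estimate for the restricted divisor function $\tau_3$ in arithmetic progressions, carry out the remaining six-variable sum by standard arithmetic-function techniques, and identify the leading constant with Peyre's prediction. Two caveats are worth flagging. First, the modulus of the arithmetic progression you invoke is a \emph{single} one of the outer variables (the paper takes it to be $\eta_{10}$, assuming WLOG $|\eta_{10}|\le|\eta_9|$ so that the Friedlander--Iwaniec uniformity condition $q\le X^{1/2+1/230}$ is met), not ``a product of the remaining outer variables'' as you write; that distinction matters, since a product would immediately violate the uniformity range. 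Second, and more substantially, your sketch glosses over two technical devices that are the heart of making the error summable: (i) a preparatory domain-restriction step (the paper's lemma~\ref{lemmalog} via lemma~\ref{lemma bound}) discarding the contribution where some $|\eta_i|\le(\log B)^A$ for $i\ne 9,10$, without which the secondary Friedlander--Iwaniec error $X\log X/(\varphi(q)L_i)$ overwhelms the main term when accumulated over the outer variables; and (ii) the fact that once $\eta_9$ is eliminated, its height constraint forces $(\eta_1,\eta_6,\eta_8)$ to range over a region cut out by a condition of the form $x y^2 \lvert xyz+T\rvert \le X_1$ rather than a box, so one must first cover this region by hyperrectangles of multiplicative width $1+\delta$ and optimize in $\delta$ before Friedlander--Iwaniec can be applied (this is the content of lemmas~\ref{pre lemma tau} and~\ref{lemma tau}). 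Both would need to be supplied to turn your outline into a proof.
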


Since $\rho = 7$, this estimate proves that $V$ satisfies Manin's conjecture. Derenthal has proved that $V$ is not toric \cite[Proposition 12]{D-hyper} and Derenthal and Loughran have proved that it is not an equivariant compactification of $\mathbb{G}_a^2$ \cite{DL-equi}, so theorem~\ref{Manin} is not a consequence of the general results concerning equivariant compactifications of algebraic groups \cite{MR1620682} and \cite{MR1906155}.

The following section is dedicated to the proofs of several preliminary results. The most important part of it is section \ref{equidistribution section} in which we present the result of Friedlander and Iwaniec on which relies the proof of theorem \ref{Manin}. In the next two sections, we respectively introduce the universal torsor mentioned previously and calculate Peyre's constant. Finally, the remaining section is devoted to the proof of theorem \ref{Manin}.

It is a pleasure for the author to thank his supervisor Professor de la Bretèche for his high availability and his enthusiastic guidance. The author is also grateful to Professor Derenthal for his useful explanations about the value of the constant $\alpha(\widetilde{V})$ appearing in Peyre's constant.

The financial support of the ANR PEPR (Points Entiers Points Rationnels) is gratefully acknowledged.

\section{Preliminaries}

\subsection{An elementary lemma}

We state the following elementary result as it may turn out to be useful for further applications. We will use it in section \ref{summations section} in the case where $r = 3$.

\begin{lemma}
\label{elementary}
Let $A \in \mathbb{R}$, $Y \geq 1$ and $r \in \mathbb{Z}_{\geq 1}$. Let also $\mathcal{R} \subset \mathbb{R}$ be the set of real numbers $t$ subject to the condition
\begin{eqnarray}
\label{cond}
\left| t^r + A t^{r-1} \right| & \leq & Y \textrm{.}
\end{eqnarray}
We have the bound
\begin{eqnarray*}
\meas(\mathcal{R}) & \leq & 4 Y^{1/r} \textrm{.}
\end{eqnarray*}
\end{lemma}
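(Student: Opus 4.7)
\medskip

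\noindent\textbf{Proof plan.} The key observation is that the polynomial appearing in \eqref{cond} factors as
\begin{eqnarray*}
t^r + A t^{r-1} & = & t^{r-1}(t+A)\textrm{,}
\end{eqnarray*}
so that the defining condition of $\mathcal{R}$ reads $|t|^{r-1}\,|t+A|\leq Y$. The plan is to show that any $t\in\mathcal{R}$ must be close to one of the two roots $0$ or $-A$, and then to bound $\meas(\mathcal{R})$ by the sum of the lengths of the corresponding two intervals.

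More precisely, I would argue by contradiction on the alternative. Assume that $t\in\mathcal{R}$ satisfies both $|t|>Y^{1/r}$ and $|t+A|>Y^{1/r}$. Then $|t|^{r-1}>Y^{(r-1)/r}$, so
\begin{eqnarray*}
|t|^{r-1}\,|t+A| & > & Y^{(r-1)/r}\cdot Y^{1/r} \;=\; Y\textrm{,}
\end{eqnarray*}
contradicting $t\in\mathcal{R}$. Hence every $t\in\mathcal{R}$ satisfies either $|t|\leq Y^{1/r}$ or $|t+A|\leq Y^{1/r}$, which yields the inclusion
\begin{eqnarray*}
\mathcal{R} & \subset & \bigl[-Y^{1/r},Y^{1/r}\bigr]\,\cup\,\bigl[-A-Y^{1/r},-A+Y^{1/r}\bigr]\textrm{.}
\end{eqnarray*}
Each of these two intervals has length $2Y^{1/r}$, so $\meas(\mathcal{R})\leq 4Y^{1/r}$, as desired.

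There is really no obstacle in this argument: it is a two-line dichotomy based on the factorisation $t^{r-1}(t+A)$ and the elementary inequality $Y^{(r-1)/r}\cdot Y^{1/r}=Y$. The hypothesis $Y\geq 1$ plays no role here and is presumably only convenient for the application of the lemma in section \ref{summations section}. The same strategy would in fact give a bound of the shape $2r\,Y^{1/r}$ for any monic polynomial of degree $r$ via its complete factorisation, but for the specific shape $t^{r-1}(t+A)$ one only pays for the two distinct roots and recovers the sharp constant~$4$.
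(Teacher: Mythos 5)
Your proof is correct and is essentially the same as the paper's: the paper splits $\mathbb{R}$ into $I=\{|t|\leq Y^{1/r}\}$ and its complement, notes $\meas(\mathcal{R}\cap I)\leq 2Y^{1/r}$ trivially, and observes that on the complement the condition forces $|t+A|\leq Y^{1/r}$, which is exactly your dichotomy phrased as two intersections rather than a union of two intervals. (A trivial nitpick: for $r=1$ your intermediate strict inequality $|t|^{r-1}>Y^{(r-1)/r}$ is an equality $1=1$, but the contradiction still goes through since $|t+A|>Y^{1/r}=Y$.)
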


\begin{proof}
Let $I = \left\{ t \in \mathbb{R}, |t| \leq Y^{1/r} \right\}$ and $J = \mathbb{R} \setminus I$. Since $\meas \left( I \right) = 2 Y^{1/r}$, we have $\meas \left( \mathcal{R} \cap I \right) \leq 2 Y^{1/r}$. Moreover, if $t \in J$, the condition \eqref{cond} gives $\left| t + A \right| \leq Y^{1/r}$. This shows that $\meas \left( \mathcal{R} \cap J \right) \leq 2 Y^{1/r}$, which completes the proof.
\end{proof}

\subsection{Equidistribution of the values of a restricted divisor function in \text{arithmetic} progressions}

\label{equidistribution section}

As already underlined, the proof of theorem \ref{Manin} draws upon a deep result about the equidistribution of the values of a certain divisor function in arithmetic progressions. From now on, let $0 < \delta \leq 1$ be a parameter, $\zeta = 1 + \delta$ and let $U$, $V$ and $W$ be variables running over the set $\{ \pm \zeta^n, n \in \mathbb{Z}_{\geq -1} \}$. We define $\mathcal{I} = ]U,\zeta U]$ if $U > 0$ and $\mathcal{I} = [\zeta U,U[$ if $U < 0$. The ranges $\mathcal{J}$ and $\mathcal{K}$ are built the same way using respectively the variables $V$ and $W$. Let also $a, q \in \mathbb{Z}_{\geq 1}$ be two coprime integers. We introduce the two quantities
\begin{eqnarray}
\label{N}
\ \ \ \ \ \ N(\mathcal{I},\mathcal{J},\mathcal{K};q,a) & = &
\# \left\{ (u,v,w) \in \mathcal{I} \times \mathcal{J} \times \mathcal{K} \cap \mathbb{Z}^3,
uvw \equiv a \imod{q} \right\} \textrm{,}
\end{eqnarray}
and
\begin{eqnarray}
\label{Nast}
\ \ \ \ \ \ N^{\ast}(\mathcal{I},\mathcal{J},\mathcal{K};q) & = &
\frac1{\varphi(q)} \# \left\{ (u,v,w) \in \mathcal{I} \times \mathcal{J} \times \mathcal{K} \cap \mathbb{Z}^3,
\gcd(uvw,q) = 1 \right\} \textrm{.}
\end{eqnarray}
The following lemma is a restatement of \cite[section $3$, proposition $1$]{MR786351}.

\begin{lemma}
\label{Fri-Iwa lemma}
Let $U, V, W$ be as described above and let $X > 0$ be a quantity such that $|UVW| \leq X$. Define
\begin{eqnarray}
\label{definition E}
E(X,q) & = & \frac{X^{1/2}}{q^{1/150}} \left( \frac{X}{q^2} \right)^{17/150} \textrm{.}
\end{eqnarray}
Let $\varepsilon > 0$ be fixed. For $q \leq X^{1/2 + 1/230}$, we have the estimate
\begin{eqnarray*}
N(\mathcal{I},\mathcal{J},\mathcal{K};q,a) & = & N^{\ast}(\mathcal{I},\mathcal{J},\mathcal{K};q)
+ O \left( X^{\varepsilon} E(X,q) \right) \textrm{.}
\end{eqnarray*}
\end{lemma}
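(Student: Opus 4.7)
The plan is to follow the strategy of Friedlander and Iwaniec. The starting point is to detect the congruence $uvw \equiv a \imod{q}$ via additive characters, writing
\begin{eqnarray*}
\mathbf{1}_{uvw \equiv a \imod{q}} & = & \frac{1}{q} \sum_{h = 0}^{q-1} e^{2 \pi i h (uvw - a)/q}\textrm{,}
\end{eqnarray*}
so that $N(\mathcal{I},\mathcal{J},\mathcal{K};q,a)$ splits according to the frequency $h$. The contribution of $h = 0$ produces $\#(\mathcal{I} \cap \mathbb{Z}) \cdot \#(\mathcal{J} \cap \mathbb{Z}) \cdot \#(\mathcal{K} \cap \mathbb{Z})/q$; after a M\"obius inversion to insert the coprimality conditions appearing in the definition \eqref{Nast} of $N^{\ast}$, this matches $N^{\ast}(\mathcal{I},\mathcal{J},\mathcal{K};q)$ up to an error easily absorbed into $O(X^{\varepsilon} E(X,q))$.

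The real work is in controlling the contribution of the frequencies $h \neq 0$. Grouping by $d = (h,q)$ and writing $q = d q'$, $h = d h'$ with $(h',q') = 1$, I would reduce the matter to estimating trilinear exponential sums of the form
\begin{eqnarray*}
S(q';h') & = & \sum_{u \in \mathcal{I}} \sum_{v \in \mathcal{J}} \sum_{w \in \mathcal{K}} e^{2 \pi i h' uvw/q'}\textrm{.}
\end{eqnarray*}
The natural first move is Poisson summation in the variable $w$, which turns $S(q';h')$ into a weighted sum of incomplete two-dimensional Kloosterman-type sums in $u$ and $v$. The deep input here is Deligne's resolution of the Weil conjectures \cite{MR0340258}, which yields square-root cancellation for complete hyper-Kloosterman sums in several variables.

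The main obstacle is that after Poisson summation the sums over $u$ and $v$ remain incomplete, and the square-root bound alone would fall short of the full range $q \leq X^{1/2 + 1/230}$ allowed in the statement. To overcome this one must deploy a Burgess-type amplification: apply H\"older's inequality in, say, $u$ to raise the sum to an even power $2k$, complete the resulting inner sum over $u$, and bound the higher moment of the complete Kloosterman sum obtained this way once more via Deligne. Balancing the H\"older exponent $k$ against the completion loss and against the lengths of $\mathcal{I}, \mathcal{J}, \mathcal{K}$ is what ultimately pins down the exact shape of $E(X,q)$ in \eqref{definition E}; the peculiar exponents $1/150$ and $17/150$ emerge from this optimisation. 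Carrying out this balancing explicitly, while keeping uniformity in $a$, is precisely the technical content of \cite[\S 3, Proposition 1]{MR786351}.
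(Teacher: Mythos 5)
The paper does not prove this lemma at all: it is stated explicitly as a restatement of \cite[\S 3, Proposition 1]{MR786351} and imported as a black box (the only comment the author adds is the remark that the Friedlander--Iwaniec proof works for general non-zero $U,V,W$). Your proposal is therefore not competing with a proof in the paper, and its ultimate deferral of the exponent-balancing to the same citation is in keeping with how the paper itself handles the lemma. The broad roadmap you describe --- additive character detection, Poisson summation, Deligne's bounds for the resulting complete Kloosterman-type sums, and a Burgess-style amplification to treat the incompleteness --- is indeed the structure of the Friedlander--Iwaniec argument, and matches the paper's own prose description of their method.

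There is, however, a genuine inaccuracy in the way you extract the main term. You assert that the $h=0$ frequency contributes $\#(\mathcal{I}\cap\mathbb{Z})\,\#(\mathcal{J}\cap\mathbb{Z})\,\#(\mathcal{K}\cap\mathbb{Z})/q$ and that this "matches $N^{\ast}$ up to an error easily absorbed into $O(X^{\varepsilon}E(X,q))$". This is false. By definition
\begin{eqnarray*}
N^{\ast}(\mathcal{I},\mathcal{J},\mathcal{K};q) & = & \frac{1}{\varphi(q)}\,\#\{u\in\mathcal{I},\,(u,q)=1\}\,\#\{v\in\mathcal{J},\,(v,q)=1\}\,\#\{w\in\mathcal{K},\,(w,q)=1\}\textrm{,}
\end{eqnarray*}
which is asymptotically $\frac{\varphi(q)^2}{q^3}\,|\mathcal{I}||\mathcal{J}||\mathcal{K}|$, while the $h=0$ term is $\frac{1}{q}|\mathcal{I}||\mathcal{J}||\mathcal{K}|$. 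The two differ by the multiplicative factor $q^2/\varphi(q)^2$, so for $q$ of size close to $X^{1/2}$ the discrepancy is of order $X/q\asymp X^{1/2}$, which is \emph{larger} than $E(X,q)\asymp X^{1/2-1/300}$. That discrepancy is in fact accounted for precisely by the additive frequencies $h\neq 0$ with $\gcd(h,q)>1$ (the "ramified" frequencies $q'=q/\gcd(h,q)$): for such $h$ the inner geometric sum over $w$ does not oscillate when $q'\mid uv$, and collecting these non-oscillating pieces over all $d\mid q$ is exactly what reconstructs the coprimality factor $\varphi(q)^2/q^3$. Your sketch groups by $d=(h,q)$ but then treats all $h\neq 0$ as error, so it misplaces part of the main term into the error. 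The clean way to sidestep this bookkeeping is to detect the congruence by multiplicative characters modulo $q$ instead of additive ones: the principal character then contributes exactly $N^{\ast}$, and $N-N^{\ast}$ is a genuine sum over non-principal characters. This is a standard point but it is not a cosmetic one here, since the claimed bound is sharpest precisely in the regime $q$ near $X^{1/2}$ where the discrepancy you have discarded is dominant.

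Apart from this, the heavy analytic content (the exact H\"older exponent, the completion loss, and the optimisation that produces $1/150$ and $17/150$) is, as you say, the substance of \cite[\S 3, Proposition 1]{MR786351}, and it is reasonable --- and consistent with the paper --- to defer to it.
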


Let us remark that the proof of this lemma actually shows that the result is also true if $U$, $V$ and $W$ are any non-zero quantities.

It is easy to check that this result is stronger than the result obtained by a more straightforward appeal to Deligne's work only for $q > X^{1/2 - 1/370}$. Note that in \cite{MR786351}, Friedlander and Iwaniec only work with positive $u$, $v$ and $w$ but this does not change anything since we can change $a$ in $-a$. Note also that an immediate consequence of this estimate is
\begin{eqnarray}
\label{bound}
N(\mathcal{I},\mathcal{J},\mathcal{K};q,a) & \ll & \frac1{\varphi(q)} \# \left( \mathcal{I} \times \mathcal{J} \times \mathcal{K}  \cap \mathbb{Z}^3 \right) + X^{\varepsilon} E(X,q) \textrm{.}
\end{eqnarray}

We now introduce a certain domain $\mathcal{S} \subset \mathbb{R}^3$ where the triple $(u,v,w)$ is restricted to lie. Let
$X, X_1, X_2, T, Z, L_1, L_2 > 0$. We let $\mathcal{S} = \mathcal{S}(X, X_1, X_2, T, Z, L_1, L_2)$ be the set of
$(x,y,z) \in \mathbb{R}_{\geq 0}^2 \times \mathbb{R}$ such that
\begin{eqnarray}
\label{A}
x y^2 |xyz + T| & \leq & X_1 \textrm{,} \\
\label{B}
x z^2 & \leq & X_2 \textrm{,} \\
\label{C}
xy|z| & \leq & X \textrm{,} \\
\label{D}
Z & \leq & |xyz + T| \textrm{,} \\
\label{E}
L_1 & \leq & y \textrm{,} \\
\label{F}
L_2 & \leq & |z| \textrm{.}
\end{eqnarray}
Finally, we introduce
\begin{eqnarray*}
D(\mathcal{S};q,a) & = & \# \left\{ (u,v,w) \in \mathcal{S} \cap \mathbb{Z}^3, uvw \equiv a \imod{q} \right\} \textrm{,}
\end{eqnarray*}
and
\begin{eqnarray*}
D^{\ast}(\mathcal{S};q) & = & \frac1{\varphi(q)} \# \left\{ (u,v,w) \in \mathcal{S} \cap \mathbb{Z}^3, \gcd(uvw,q) = 1 \right\} \textrm{.}
\end{eqnarray*}

\begin{lemma}
\label{pre lemma tau}
Let $\varepsilon > 0$ be fixed. If $T \leq X$ then for $q \leq X^{1/2 + 1/230}$, we have the estimate
\begin{eqnarray*}
D(\mathcal{S};q,a) - D^{\ast}(\mathcal{S};q) & \ll & \frac{X^{1/2 + \varepsilon}}{q^{1/600}}
\left( \frac{X}{q^2} \right)^{1/2- \vartheta} + \frac{X \log(X)}{\varphi(q)} \left( \frac1{L_1} + \frac1{L_2} \right) \textrm{,}
\end{eqnarray*}
where $\vartheta = 29/300$.
\end{lemma}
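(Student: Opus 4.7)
The plan is a dyadic decomposition of $\mathcal{S}$ combined with Lemma \ref{Fri-Iwa lemma}. Set $\zeta=1+\delta$ with $\delta\in(0,1]$ to be chosen at the end, and cover $\mathbb{R}_{\geq0}^{2}\times\mathbb{R}$ by the hyperrectangles $\mathcal{I}\times\mathcal{J}\times\mathcal{K}$ of Lemma \ref{Fri-Iwa lemma}. Call a box \emph{interior} if it is entirely contained in $\mathcal{S}$, and \emph{boundary} otherwise. Condition \eqref{C} both supplies the hypothesis $|UVW|\leq X$ required by Lemma \ref{Fri-Iwa lemma} on every box meeting $\mathcal{S}$, and bounds the total number of admissible triples $(U,V,W)$ by $O((\log X/\delta)^{3})$.

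For interior boxes, Lemma \ref{Fri-Iwa lemma} directly yields
$$
N(\mathcal{I},\mathcal{J},\mathcal{K};q,a)-N^{\ast}(\mathcal{I},\mathcal{J},\mathcal{K};q)\ll X^{\varepsilon}E(X,q),
$$
so the interior contribution to $D(\mathcal{S};q,a)-D^{\ast}(\mathcal{S};q)$ is of order $(\log X/\delta)^{3}X^{\varepsilon}E(X,q)$. For boundary boxes, \eqref{bound} together with the trivial estimate $N^{\ast}\ll|\mathcal{I}||\mathcal{J}||\mathcal{K}|/\varphi(q)$ gives $|N-N^{\ast}|\ll|\mathcal{I}||\mathcal{J}||\mathcal{K}|/\varphi(q)+X^{\varepsilon}E(X,q)$. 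The second term is absorbed into the interior estimate, so the task reduces to bounding $\sum_{\mathrm{bdy}}|\mathcal{I}||\mathcal{J}||\mathcal{K}|/\varphi(q)$.

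To control this sum, I work with integer counts rather than raw box volumes, using $\#(\mathrm{box}\cap\mathbb{Z}^{3})\ll(\delta|U|+1)(\delta|V|+1)(\delta|W|+1)$; the $+1$'s are precisely what will produce the $1/L_{1}$ and $1/L_{2}$ terms. The boundary of $\mathcal{S}$ decomposes into the six strata given by equality in \eqref{A}--\eqref{F}. For \eqref{E} (resp.\ \eqref{F}), the boundary layer collapses to the single dyadic level $V\asymp L_{1}$ (resp.\ $|W|\asymp L_{2}$), and a direct count of integer $(u,w)$ (resp.\ $(u,v)$) subject to the bound $uv|w|\leq X$ coming from \eqref{C} yields a contribution of order $(\delta+1/L_{1}+1/L_{2})X\log X$. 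For the strata coming from \eqref{A}, \eqref{B}, \eqref{D}, fiberwise the boundary is a slab of relative thickness $O(\delta)$ in one of the three variables, so the contribution is $O(\delta)$ times the integer count of $\mathcal{S}$, again controlled through \eqref{C}; the hypothesis $T\leq X$ here ensures that the factor $|xyz+T|$ appearing in \eqref{A} and \eqref{D} does not perturb the scales set by \eqref{C}.

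Finally, I choose $\delta\asymp(\log X)\,q^{-1/600}(X/q^{2})^{-29/300}$, which makes the interior contribution match the target $X^{1/2+\varepsilon}q^{-1/600}(X/q^{2})^{1/2-\vartheta}$ with $\vartheta=29/300$. Using the hypothesis $q\leq X^{1/2+1/230}$ together with $q/\varphi(q)\ll\log\log X$, one checks routinely that every $\delta$-proportional contribution $\delta X\log X/\varphi(q)$ is dominated by this main error, leaving only the boundary terms $X\log X/(\varphi(q)L_{1})+X\log X/(\varphi(q)L_{2})$. The hard part is the bookkeeping at the six strata and the verification that only \eqref{E} and \eqref{F} yield $\delta$-independent contributions; this is where the $+1$'s in the integer count and the hypothesis $T\leq X$ are essential.
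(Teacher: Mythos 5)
Your proposal follows the same overall strategy as the paper: dyadic decomposition with ratio $\zeta = 1+\delta$, application of Lemma~\ref{Fri-Iwa lemma} on interior boxes, a boundary-layer analysis driven by the bound \eqref{bound}, and a final optimization of $\delta$. The choice $\delta \asymp (\log X)\,q^{-1/600}(X/q^2)^{-29/300}$ differs from the paper's by a $\log X$ factor, but since the claimed bound carries an $X^{\varepsilon}$ factor this is harmless; your value also satisfies $\delta \leq 1$ under the hypothesis $q\leq X^{1/2+1/230}$, as required for the argument to make sense.

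One point you should tighten: you assert that ``only \eqref{E} and \eqref{F} yield $\delta$-independent contributions.'' That is not what the paper's careful stratum-by-stratum count gives, and it is not correct. For \eqref{A}, \eqref{B}, \eqref{C}, \eqref{D} the boundary slab has measure $O(\delta)$ in the fiber variable, but when you count \emph{integer} points fiber by fiber you incur a ``$+1$'' per fiber exactly as in your preliminary discussion of $\#(\mathrm{box}\cap\mathbb{Z}^{3})$; summing those $+1$'s over the $(u,v)$ (or $(u,w)$) fibers lying in the base region of $\mathcal{S}$ --- whose cardinality is $\ll X\log X/L_{2}$ (or $X\log X/L_{1}$) via the combination of \eqref{C}, \eqref{E}, \eqref{F} --- produces a $\delta$-independent term $X\log X/L_{2}$ for each of \eqref{A}, \eqref{B}, \eqref{C}, \eqref{D} as well. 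In the paper each stratum contributes $\ll \delta X^{1+\varepsilon} + X\log X/L_{i}$, and it is the union of all six that yields the full $X\log X(\varphi(q))^{-1}(1/L_1+1/L_2)$ in the final estimate. So your slogan that only two strata matter for the $\delta$-independent part is a false premise even though it does not change the final answer, because the extra $\delta$-independent terms you are missing are of exactly the shape already present. You should also flag Lemma~\ref{elementary} explicitly when bounding the slab width in the $t_6$-variable for \eqref{A} and \eqref{D}: the condition $xy^2|xyz+T|\leq X_1$ is cubic in $y$, and the uniform $4Y^{1/r}$ bound is what makes the fiber-measure estimate legitimate; ``the hypothesis $T\leq X$ does not perturb the scales'' is not a substitute for that step.
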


Note that the assumption $T \leq X$ together with the two conditions $xy|z| \leq X$  and $Z \leq |xyz + T|$ imply $Z \leq 2 X$.

\begin{proof}
If $\mathcal{S} \cap \mathbb{Z}_{\neq 0}^3 = \emptyset$ then the result obviously holds, we therefore assume from now on that
$\mathcal{S} \cap \mathbb{Z}_{\neq 0}^3 \neq \emptyset$. We let $0 < \delta \leq 1$ be a parameter to be selected later. Recall the definitions of $\zeta$, $U$, $V$, $W$ and $\mathcal{I}$, $\mathcal{J}$, $\mathcal{K}$ given at the beginning of the section. We have
\begin{eqnarray*}
D(\mathcal{S};q,a) -  \sum_{\mathcal{I} \times \mathcal{J} \times \mathcal{K} \cap \mathbb{Z}^3 \subset \mathcal{S}}
N(\mathcal{I},\mathcal{J}, \mathcal{K};q,a) & \ll &
\sum_{\substack{\mathcal{I} \times \mathcal{J} \times \mathcal{K} \cap \mathbb{Z}^3 \nsubseteq \mathcal{S} \\ \mathcal{I} \times \mathcal{J} \times \mathcal{K} \cap \mathbb{Z}^3 \nsubseteq \mathbb{R}^3 \setminus \mathcal{S}}}
N(\mathcal{I},\mathcal{J},\mathcal{K};q,a) \textrm{.}
\end{eqnarray*}
We define the quantity
\begin{eqnarray*}
D(\mathcal{S};q) & = & \sum_{\mathcal{I} \times \mathcal{J} \times \mathcal{K} \cap \mathbb{Z}^3 \subset \mathcal{S}}
N^{\ast}(\mathcal{I},\mathcal{J},\mathcal{K};q) \textrm{.}
\end{eqnarray*}
We note that since $N^{\ast}(\mathcal{I},\mathcal{J},\mathcal{K};q)$ is independent of $a$, so is $D(\mathcal{S};q)$. Moreover, we have
\begin{eqnarray*}
\sum_{\mathcal{I} \times \mathcal{J} \times \mathcal{K} \cap \mathbb{Z}^3 \subset \mathcal{S}}
N(\mathcal{I},\mathcal{J},\mathcal{K};q,a) - D(\mathcal{S};q) & \ll & \frac{X^{\varepsilon} E(X,q)}{\delta^3} \textrm{,}
\end{eqnarray*}
using lemma \ref{Fri-Iwa lemma} and noticing that the number of hyperrectangles $\mathcal{I} \times \mathcal{J} \times \mathcal{K}$ such that $\mathcal{I} \times \mathcal{J} \times \mathcal{K} \cap \mathbb{Z}^3 \subset \mathcal{S}$ is less than
$2 \left( \log(X) / \log(\zeta) \right)^3 \ll X^{\varepsilon} \delta^{-3}$ since $\delta \leq 1$. We have proved that \begin{eqnarray*}
D(\mathcal{S};q,a) - D(\mathcal{S};q) & \ll &
\sum_{\substack{\mathcal{I} \times \mathcal{J} \times \mathcal{K} \cap \mathbb{Z}^3 \nsubseteq \mathcal{S} \\
\mathcal{I} \times \mathcal{J} \times \mathcal{K} \cap \mathbb{Z}^3 \nsubseteq \mathbb{R}^3 \setminus \mathcal{S}}}
N(\mathcal{I},\mathcal{J},\mathcal{K};q,a) + \frac{X^{\varepsilon} E(X,q)}{\delta^3} \textrm{.}
\end{eqnarray*}
Using the bound \eqref{bound} for $N(\mathcal{I},\mathcal{J},\mathcal{K};q,a)$, we conclude that
\begin{eqnarray*}
D(\mathcal{S};q,a) - D(\mathcal{S};q) & \ll & \frac1{\varphi(q)}
\sum_{\substack{\mathcal{I} \times \mathcal{J} \times \mathcal{K} \cap \mathbb{Z}^3 \nsubseteq \mathcal{S} \\ \mathcal{I} \times \mathcal{J} \times \mathcal{K} \cap \mathbb{Z}^3 \nsubseteq \mathbb{R}^3 \setminus \mathcal{S}}}
\# \left( \mathcal{I} \times \mathcal{J} \times \mathcal{K} \cap \mathbb{Z}^3 \right) + \frac{X^{\varepsilon} E(X,q)}{\delta^3} \textrm{,}
\end{eqnarray*}
since the number of hyperrectangles $\mathcal{I} \times \mathcal{J} \times \mathcal{K}$ satisfying
$\mathcal{I} \times \mathcal{J} \times \mathcal{K} \cap \mathbb{Z}^3 \nsubseteq \mathcal{S}$ and 
$\mathcal{I} \times \mathcal{J} \times \mathcal{K} \cap \mathbb{Z}^3 \nsubseteq \mathbb{R}^3 \setminus \mathcal{S}$ is also
$\ll X^{\varepsilon} \delta^{-3}$. The sum of the right-hand side is over all the hyperrectangles
$\mathcal{I} \times \mathcal{J} \times \mathcal{K}$ for which we have
$(\zeta^{s_1} U, \zeta^{s_2} V, \zeta^{s_3} W) \in \mathcal{S} \cap \mathbb{Z}^3$ and
$(\zeta^{t_1} U, \zeta^{t_2} V, \zeta^{t_3} W) \in \mathbb{Z}^3 \setminus \mathcal{S}$ for some triples $(s_1,s_2,s_3) \in ]0,1]^3$ and $(t_1,t_2,t_3) \in ]0,1]^3$. This means that one of the inequalities defining $\mathcal{S}$ is not satisfied by
$(\zeta^{t_1} U, \zeta^{t_2} V, \zeta^{t_3} W)$ and we need to estimate the contribution coming from each condition among \eqref{A}, \eqref{B}, \eqref{C}, \eqref{D}, \eqref{E} and \eqref{F}. Note that combining the conditions \eqref{C}, \eqref{E} and \eqref{F}, we get
\begin{eqnarray}
\label{condition0'}
U|W| & \ll & \frac{X}{L_1} \textrm{,} \\
\label{condition0}
UV & \ll & \frac{X}{L_2} \textrm{.}
\end{eqnarray}
Note that, in what follows, we could sometimes write strict inequalities instead of non-strict but this would not change anything in our reasoning. Let us first deal with the condition \eqref{A}. For the hyperrectangles $\mathcal{I} \times \mathcal{J} \times \mathcal{K}$ described above, for some $(s_1,s_2,s_3) \in ]0,1]^3$ and $(t_1,t_2,t_3) \in ]0,1]^3$, we have
\begin{eqnarray}
\label{condition1}
\zeta^{s_1 + 2s_2} U V^2 \left| \zeta^{s_1 + s_2 + s_3} UVW + T \right| & \leq & X_1 \textrm{,} \\
\label{condition2}
\zeta^{t_1 + 2t_2} U V^2 \left| \zeta^{t_1 + t_2 + t_3} UVW + T \right| & > & X_1 \textrm{.}
\end{eqnarray}
Note that using $T \leq X$ and $UV|W| \leq X$, the second inequality gives
\begin{eqnarray}
\label{condition3}
U V^2 & \gg & \frac{X_1}{X} \textrm{.}
\end{eqnarray}
The two conditions \eqref{condition1} and \eqref{condition2} imply
\begin{eqnarray}
\label{condition4}
& & \zeta^{-6} \frac{X_1}{U V^2} - \left( 1 - \zeta^{-3} \right) T
< \left| UVW + T \right| \leq \frac{X_1}{U V^2} + \left( 1 - \zeta^{-3} \right) T \textrm{.}
\end{eqnarray}
Going back to the variables $u$, $v$ and $w$, we easily get
\begin{eqnarray*}
\big| \left| uvw + T \right| - \left| UVW + T \right| \big| & \leq & 
\left| uvw - UVW \right| \\
& \leq & 7 \delta UV|W| \\
& \leq & 7 \delta \left( \frac{X_1}{UV^2} + T \right) \textrm{,}
\end{eqnarray*}
using the condition \eqref{condition1}. Since $1 - \zeta^{-3} \leq 3 \delta$, the inequality \eqref{condition4} gives
\begin{eqnarray*}
& & \left( \zeta^{-6} -  7 \delta \right) \frac{X_1}{U V^2} - 10 \delta T < \left| uvw + T \right| \leq
\left( 1 + 7 \delta \right) \frac{X_1}{U V^2} + 10 \delta T \textrm{,}
\end{eqnarray*}
and therefore
\begin{eqnarray}
\label{condition uvw}
& & \left( \zeta^{-6} -  7 \delta \right) \frac{X_1}{u v^2} - 10 \delta T < \left| uvw + T \right| \leq
\zeta^3 \left( 1 + 7 \delta \right) \frac{X_1}{u v^2} + 10 \delta T \textrm{,}
\end{eqnarray}
Note that this inequality is not as sharp as possible but it does not matter for our purpose. Thereby, we see that the error we want to estimate is bounded by
\begin{eqnarray*}
\sum_{\substack{\eqref{condition0} \\ \eqref{condition3}, \eqref{condition4}}}
\# \left( \mathcal{I} \times \mathcal{J} \times \mathcal{K} \cap \mathbb{Z}^3 \right)
& \ll & \# \left\{ (u,v,w) \in \mathbb{Z}_{\neq 0}^3,
\begin{array}{l}
\eqref{condition uvw} \\
uv \ll X / L_2 \\
u v^2 \gg X_1 / X
\end{array}
\right\} \\
& \ll & \sum_{\substack{uv \ll X / L_2 \\ u v^2 \gg X_1 / X}}
\left( \frac{\delta X_1}{u^2v^3} + \frac{\delta T}{uv} + 1 \right) \\
& \ll & \sum_{v \ll X / L_2}
\left( \frac{\delta X}{v} + \frac{\delta T X^{\varepsilon}}{v} + \frac{X}{L_2 v} \right) \\
& \ll & \delta X^{1 + \varepsilon} + \frac{X \log(X)}{L_2} \textrm{,}
\end{eqnarray*}
since $T \leq X$. We now reason in a similar way to deal with the other conditions. Let us estimate the contribution coming from the condition \eqref{B}. We see that the condition which plays the role of \eqref{condition4} in the previous case is here
\begin{eqnarray}
\label{condition5}
& & \zeta^{-3} X_2 < U W^2 \leq X_2 \textrm{,}
\end{eqnarray}
and, combined with $U V |W| \leq X$, it implies
\begin{eqnarray}
\label{condition6}
U^{1/2} V & \ll & \frac{X}{X_2^{1/2}} \textrm{.}
\end{eqnarray}
Furthermore, going back to the variables $u$ and $w$, we have $\zeta^{-3} X_2 < u w^2 \leq \zeta^3 X_2$. We therefore find that the error in this case is bounded by
\begin{eqnarray*}
\sum_{\substack{\eqref{condition0} \\ \eqref{condition5}, \eqref{condition6}}}
\# \left( \mathcal{I} \times \mathcal{J} \times \mathcal{K} \cap \mathbb{Z}^3 \right)
& \ll & \# \left\{ (u,v,w) \in \mathbb{Z}_{\neq 0}^3,
\begin{array}{l}
\zeta^{-3} X_2 < u w^2 \leq \zeta^3 X_2 \\
uv \ll X / L_2 \\
u^{1/2} v \ll X / X_2^{1/2}
\end{array}
\right\} \\
& \ll & \sum_{\substack{uv \ll X / L_2 \\ u^{1/2} v \ll X / X_2^{1/2}}} \left( \frac{\delta X_2^{1/2}}{u^{1/2}} + 1 \right) \\
& \ll &  \sum_{v \ll X / L_2} \left( \frac{\delta X}{v} + \frac{X}{L_2 v} \right) \\
& \ll & \delta X^{1 + \varepsilon} + \frac{X \log(X)}{L_2} \textrm{.}
\end{eqnarray*}
In the case of the condition \eqref{C}, we have the inequality
\begin{eqnarray}
\label{condition7}
& & \zeta^{-3} X < UV|W| \leq X \textrm{,}
\end{eqnarray}
and the condition on the variables $u$, $v$ and $w$ is thus $\zeta^{-3} X < uv|w| \leq \zeta^3 X$. In a similar fashion, we see that the contribution corresponding to this condition is bounded by
\begin{eqnarray*}
\sum_{\eqref{condition0}, \eqref{condition7}}
\# \left( \mathcal{I} \times \mathcal{J} \times \mathcal{K} \cap \mathbb{Z}^3 \right) & \ll &
\# \left\{ (u,v,w) \in \mathbb{Z}_{\neq 0}^3,
\begin{array}{l}
\zeta^{-3} X < uv|w| \leq \zeta^3 X \\
uv \ll X / L_2
\end{array}
\right\} \\
& \ll & 
\sum_{uv \ll X / L_2} \left( \frac{\delta X}{u v} + 1 \right) \\
& \ll & \sum_{v \ll X / L_2} \left( \frac{\delta X^{1 + \varepsilon}}{v} + \frac{X}{L_2 v} \right) \\
& \ll & \delta X^{1 + \varepsilon} + \frac{X \log(X)}{L_2} \textrm{.}
\end{eqnarray*}
Let us now deal with the condition \eqref{D}. This time, reasoning as we did to obtain the condition \eqref{condition4}, we get
\begin{eqnarray}
\label{condition8}
& & \zeta^{-3} Z - \left( 1 - \zeta^{-3} \right) T
\leq \left| UVW + T \right| < Z + \left( 1 - \zeta^{-3} \right) T \textrm{.}
\end{eqnarray}
We can reason exactly as we did to derive the inequality \eqref{condition uvw} from \eqref{condition4}. We obtain that the condition on the variables $u$, $v$ and $w$ is
\begin{eqnarray}
\label{condition uvw 3}
& & \left( \zeta^{-3} - 7 \delta \right) Z - 10 \delta T
\leq \left| uvw + T \right| < \left( 1 + 7 \delta \right) Z + 10 \delta T \textrm{.}
\end{eqnarray}
We therefore see that this contribution is bounded by
\begin{eqnarray*}
\sum_{\eqref{condition0}, \eqref{condition8}}
\# \left( \mathcal{I} \times \mathcal{J} \times \mathcal{K} \cap \mathbb{Z}^3 \right) & \ll &
\# \left\{ (u,v,w) \in \mathbb{Z}_{\neq 0}^3,
\begin{array}{l}
\eqref{condition uvw 3} \\
uv \ll X / L_2
\end{array}
\right\} \\
& \ll & 
\sum_{uv \ll X / L_2} \left( \frac{\delta Z}{u v} +  \frac{\delta T}{u v} + 1 \right) \\
& \ll & \sum_{v \ll X / L_2} \left( \frac{\delta X^{1 + \varepsilon}}{v} + \frac{X}{L_2 v} \right) \\
& \ll & \delta X^{1 + \varepsilon} + \frac{X \log(X)}{L_2} \textrm{,}
\end{eqnarray*}
since $T \leq X$ and $Z \leq 2X$. Finally, in the case of the condition \eqref{E}, we have
\begin{eqnarray}
\label{condition9}
& & \zeta^{-1} L_1 < V \leq L_1 \textrm{,}
\end{eqnarray}
and thus $\zeta^{-1} L_1 < v \leq \zeta L_1$. We see that the contribution corresponding to this condition is bounded by
\begin{eqnarray*}
\sum_{\eqref{condition0'}, \eqref{condition9}}
\# \left( \mathcal{I} \times \mathcal{J} \times \mathcal{K} \cap \mathbb{Z}^3 \right) & \ll &
\# \left\{ (u,v,w) \in \mathbb{Z}_{\neq 0}^3,
\begin{array}{l}
\zeta^{-1} L_1 < v \leq \zeta L_1 \\
u|w| \ll X / L_1
\end{array}
\right\} \\
& \ll & 
\sum_{u|w| \ll X / L_1} \left( \delta L_1 + 1 \right) \\
& \ll & \sum_{u \ll X / L_1} \left( \frac{\delta X}{u} + \frac{X}{L_1 u} \right) \\
& \ll & \delta X^{1 + \varepsilon} + \frac{X \log(X)}{L_1} \textrm{.}
\end{eqnarray*}
Finally, in a strictly similar way, the contribution of the condition \eqref{F} is seen to be
$\ll \delta X^{1 + \varepsilon} + X \log(X)/L_2$. Writing $1/ \varphi(q) \ll X^{\varepsilon}/q$, we have obtained the estimate
\begin{eqnarray*}
D(\mathcal{S};q,a) - D(\mathcal{S};q) & \ll & X^{\varepsilon} \left( \frac{\delta X}{q} + \frac{E(X,q)}{\delta^3} \right) +
\frac{X \log(X)}{\varphi(q)} \left( \frac1{L_1} + \frac1{L_2} \right) \textrm{.}
\end{eqnarray*}
Recalling the expression \eqref{definition E} of $E(X,q)$, we see that the optimal choice for $\delta$ is
\begin{eqnarray*}
\delta & = & \frac1{q^{1/600}} \left( \frac{q^{2}}{X} \right)^{29/300} \textrm{.}
\end{eqnarray*}
This choice is allowed provided that $q \leq X^{1/2 + 1/230}$. We have finally obtained 
\begin{eqnarray*}
D(\mathcal{S};q,a) - D(\mathcal{S};q) & \ll & \frac{X^{1/2 + \varepsilon}}{q^{1/600}}
\left( \frac{X}{q^2} \right)^{1/2 - 29/300} +
\frac{X \log(X)}{\varphi(q)} \left( \frac1{L_1} + \frac1{L_2} \right) \textrm{.}
\end{eqnarray*}
Averaging this estimate over $a$ coprime to $q$ and using the fact that $D(\mathcal{S};q)$ does not depend on $a$, we see that we can replace $D(\mathcal{S};q)$ by $D^{\ast}(\mathcal{S};q)$ in this estimate, which ends the proof.
\end{proof}

Note that the estimate of lemma \ref{pre lemma tau} is actually true for $q \leq X$ but the error term is no longer better than the trivial error term $X^{1 + \varepsilon} / q$ when $q \geq X^{1/2 + 1/230}$.

It is actually a slightly different version of lemma \ref{pre lemma tau} that we need. Indeed, in our work, the variables $v$ and $w$ have to be coprime. To state the result we require, we define for $b \in \mathbb{Z}_{> 0}$,
\begin{eqnarray*}
N_b(\mathcal{I},\mathcal{J},\mathcal{K}; q,a) & = &
\# \left\{ (u,v,w) \in \mathcal{I} \times \mathcal{J} \times \mathcal{K} \cap \mathbb{Z}^3,
\begin{array}{l}
\gcd(v,bw) = 1 \\
uvw \equiv a \imod{q} \\
\end{array}
\right\} \textrm{,}
\end{eqnarray*}
and, as we can expect,
\begin{eqnarray*}
N_b^{\ast}(\mathcal{I},\mathcal{J},\mathcal{K};q) & = & \frac1{\varphi(q)}
\# \left\{ (u,v,w) \in \mathcal{I} \times \mathcal{J} \times \mathcal{K} \cap \mathbb{Z}^3,
\begin{array}{l}
\gcd(v,bw) = 1 \\
\gcd(uvw,q) = 1 \\
\end{array}
\right\} \textrm{.}
\end{eqnarray*}
Let us also introduce, for $\lambda > 0$, the arithmetic function
\begin{eqnarray}
\label{sigma}
\sigma_{- \lambda}(n) & = & \sum_{k|n} k^{- \lambda} \textrm{.}
\end{eqnarray}
Given lemmas \ref{Fri-Iwa lemma} and \ref{pre lemma tau}, the next two results are straightforward.

\begin{lemma}
\label{Fri-Iwa lemma 2}
Let $\varepsilon > 0$ be fixed. With the same notations as in lemma \ref{Fri-Iwa lemma}, for $q \leq X^{1/2+1/230}$, we have the estimate
\begin{eqnarray*}
N_b(\mathcal{I},\mathcal{J},\mathcal{K};q,a) & = & N_b^{\ast}(\mathcal{I},\mathcal{J},\mathcal{K};q)
+ O \left( \sigma_{- \lambda}(b) X^{\varepsilon} E(X,q) \right) \textrm{,}
\end{eqnarray*}
where $\lambda = 46/75$.
\end{lemma}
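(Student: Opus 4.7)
The plan is to reduce Lemma \ref{Fri-Iwa lemma 2} to Lemma \ref{Fri-Iwa lemma} by removing the coprimality condition $\gcd(v,bw) = 1$ via Möbius inversion. Since $\gcd(v,bw) = 1$ factors as $\gcd(v,b) = 1$ and $\gcd(v,w) = 1$, I would write
$$\mathbf{1}_{\gcd(v, bw) = 1} \;=\; \left(\sum_{d_1 \mid \gcd(v, b)} \mu(d_1)\right)\!\left(\sum_{d_2 \mid \gcd(v, w)} \mu(d_2)\right)\textrm{,}$$
so that $N_b(\mathcal{I},\mathcal{J},\mathcal{K};q,a)$ becomes a double sum over squarefree $d_1 \mid b$ and squarefree $d_2 \geq 1$ of counts of triples $(u,v,w)$ with $[d_1,d_2] \mid v$, $d_2 \mid w$ and $uvw \equiv a \imod{q}$, weighted by $\mu(d_1)\mu(d_2)$.

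Next, I would perform the change of variables $v = [d_1,d_2]\, v'$, $w = d_2\, w'$. Setting $D = [d_1,d_2]\, d_2$, the congruence becomes $D\, u v' w' \equiv a \imod{q}$, and since $\gcd(a,q) = 1$ only pairs with $\gcd(D,q) = 1$ contribute, in which case $D$ is invertible mod $q$. The resulting inner count is of the form $N(\mathcal{I}, \mathcal{J}', \mathcal{K}'; q, a\overline{D})$ where $\mathcal{J}'$, $\mathcal{K}'$ are hyperrectangles of the same dyadic shape as the originals, scaled so that the product size is at most $X/D$. Applying Lemma \ref{Fri-Iwa lemma} at scale $X/D$ produces an error $X^\varepsilon E(X/D,q)$, and from \eqref{definition E} one computes directly
$$\frac{E(X/D, q)}{E(X, q)} \;=\; D^{-(1/2 + 17/150)} \;=\; D^{-46/75}\textrm{,}$$
which is precisely where the exponent $\lambda = 46/75$ is born. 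Reversing the Möbius inversion on the main terms $N_b^\ast$-side reassembles them exactly into $N_b^\ast(\mathcal{I},\mathcal{J},\mathcal{K};q)$ since $N^\ast(\mathcal{I}, \mathcal{J}', \mathcal{K}'; q)$ is independent of $a$.

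It remains to sum the errors. Writing $D = d_1 d_2^2 / \gcd(d_1, d_2)$ and decomposing $d_2 = e f$ with $e = \gcd(d_1, d_2) \mid d_1$ and $\gcd(f, d_1) = 1$, the total contribution factors as
$$E(X,q)\sum_{d_1 \mid b} \frac{\mu(d_1)^2}{d_1^{46/75}} \sum_{e \mid d_1} \frac{1}{e^{46/75}} \sum_{\substack{f \geq 1 \\ \gcd(f, d_1) = 1}} \frac{|\mu(f)|}{f^{92/75}}\textrm{.}$$
The $f$-sum is uniformly bounded by $\zeta(92/75) = O(1)$ since $92/75 > 1$; the $e$-sum is at most $\sigma_{-46/75}(d_1) \leq \tau(d_1) \ll X^{\varepsilon}$; and the outer sum equals $\sigma_{-46/75}(b)$ by definition \eqref{sigma}. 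Multiplying yields the claimed bound $\sigma_{-46/75}(b)\, X^\varepsilon E(X,q)$.

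The only delicate point is handling pairs $(d_1, d_2)$ for which $D$ is so large that $q > (X/D)^{1/2 + 1/230}$ and Lemma \ref{Fri-Iwa lemma} does not directly apply; in that regime I would instead invoke the trivial bound \eqref{bound} for $N$, which gives a contribution $\ll \#(\mathcal{I}\times \mathcal{J}'\times\mathcal{K}')/\varphi(q)$ per term, and verify that summing over this tail of $D$ remains under the stated error. Beyond this bookkeeping, the proof is a direct decoupling of Lemma \ref{Fri-Iwa lemma} from the extra coprimality conditions.
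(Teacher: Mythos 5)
Your proposal follows the same broad strategy as the paper's proof---Möbius inversion to strip the coprimality condition, application of Lemma \ref{Fri-Iwa lemma} at the rescaled size $X/D$, and the key observation that
$E(X/D,q)/E(X,q)=D^{-(1/2+17/150)}=D^{-46/75}$,
which is indeed where $\lambda=46/75$ comes from. The only structural difference is in how the two Möbius inversions are organised. The paper performs them sequentially: first on $\gcd(v,w)=1$, substituting $v=kv'$, $w=kw'$ (so the modulus picks up a factor $k^2$), and then on $\gcd(v',b)=1$, substituting $v'=\ell v''$ with $\ell\mid b$ after noting one may restrict to $\gcd(k,b)=1$. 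This yields the clean product $D=k^2\ell$ with $k$ ranging over all squarefree integers coprime to $b$ and $\ell\mid b$, so the error sum factors immediately as $\bigl(\sum_k k^{-92/75}\bigr)\bigl(\sum_{\ell\mid b}\ell^{-46/75}\bigr)\ll\sigma_{-\lambda}(b)$. You instead strip $\gcd(v,b)=1$ and $\gcd(v,w)=1$ simultaneously, which is valid, but produces the messier $D=[d_1,d_2]\,d_2$ and forces the auxiliary decomposition $d_2=ef$. Both routes give the same exponent, but the paper's sequencing avoids the extra splitting.

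Two small points deserve attention. First, your intermediate bound $\sigma_{-46/75}(d_1)\leq\tau(d_1)\ll X^{\varepsilon}$ is not justified as stated: the lemma imposes no relation between $b$ and $X$, so a divisor $d_1\mid b$ need not satisfy $d_1\ll X^{O(1)}$, and $\tau(d_1)\ll X^{\varepsilon}$ is not automatic. This is easily repaired without the detour through $\tau$: since $2\lambda=92/75>1$, one has
\[
\sum_{d_1\mid b}\frac{\mu(d_1)^2}{d_1^{\lambda}}\,\sigma_{-\lambda}(d_1)
=\prod_{p\mid b}\bigl(1+p^{-\lambda}+p^{-2\lambda}\bigr)
\leq\sigma_{-\lambda}(b)\prod_{p}\bigl(1+p^{-2\lambda}\bigr)\ll\sigma_{-\lambda}(b)\textrm{,}
\]
which gives the stated $\sigma_{-\lambda}(b)$ directly. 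Second, your concern about the regime $q>(X/D)^{1/2+1/230}$ is legitimate and is in fact glossed over in the paper's own argument; it is ultimately harmless because once $D$ is large enough that the rescaled ranges $\mathcal{J}'$ and $\mathcal{K}'$ contain no integers (which forces $D\ll X$), the corresponding term vanishes, and the remaining finite tail is controlled either by the trivial bound or by noting that $E(X/D,q)$ is still dominated by $E(X,q)D^{-46/75}$ in that range. Both issues are cosmetic, and modulo them your argument is correct and essentially coincides with the paper's.
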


\begin{proof}
For $I \subset \mathbb{R}$ and $m \in \mathbb{Z}>0$, we set $I_m = \left\{ t \in \mathbb{R}, m t \in I \right\}$. A first Möbius inversion shows that $N_b(\mathcal{I},\mathcal{J},\mathcal{K};q,a)$ is equal to
\begin{eqnarray*}
\sum_{k \geq 1} \mu(k)
\# \left\{ (u,v',w') \in \mathcal{I} \times \mathcal{J}_k \times \mathcal{K}_k \cap \mathbb{Z}^3,
\begin{array}{l}
\gcd(kv',b) = 1 \\
k^2uv'w' \equiv a \imod{q} \\
\end{array}
\right\} \textrm{,}
\end{eqnarray*}
and a second Möbius inversion shows that $N_b(\mathcal{I},\mathcal{J},\mathcal{K};q,a)$ is also equal to
\begin{eqnarray*}
\sum_{\substack{k \geq 1, \ell |b \\ \gcd(k,b) = 1}} \mu(k) \mu(\ell)
\# \left\{ (u,v'',w') \in \mathcal{I} \times \mathcal{J}_{k \ell} \times \mathcal{K}_k \cap \mathbb{Z}^3,
k^2\ell uv''w' \equiv a \imod{q} \right\} \textrm{.}
\end{eqnarray*}
Furthermore, since $a$ and $q$ are coprime, we can assume that $\gcd(k \ell ,q) = 1$ and thus
$N_b(\mathcal{I},\mathcal{J},\mathcal{K}; q,a)$ is finally equal to
\begin{eqnarray*}
\sum_{\substack{k \geq 1, \ell | b \\ \gcd(k, b) = 1 \\ \gcd(k \ell ,q) = 1}} \! \! \! \! \! \! & & \! \! \! \! \! \! \mu(k)
\mu( \ell )
\# \left\{ (u, v'', w') \in \mathcal{I} \times \mathcal{J}_{k \ell} \times \mathcal{K}_k \cap \mathbb{Z}^3,
uv''w' \equiv k^{-2} \ell^{-1} a \imod{q} \right\} \textrm{.}
\end{eqnarray*}
By the remark immediately following lemma \ref{Fri-Iwa lemma}, there exists a quantity $N_b(\mathcal{I},\mathcal{J},\mathcal{K};q)$ independent of $a$ such that
\begin{eqnarray*}
N_b(\mathcal{I},\mathcal{J},\mathcal{K}; q,a) - N_b(\mathcal{I},\mathcal{J},\mathcal{K};q) & \ll &
\sum_{\substack{k \geq 1 \\ \ell |b}} |\mu(k)| |\mu(\ell)| \left( \frac{X}{k^2 \ell} \right)^{\varepsilon}
E \left( \frac{X}{k^2 \ell},q \right) \\
& \ll & \sigma_{- \lambda}(b) X^{\varepsilon} E(X,q) \textrm{.}
\end{eqnarray*}
Averaging this estimate over $a$ coprime to $q$ proves that $N_b(\mathcal{I},\mathcal{J},\mathcal{K};q)$ can be replaced by
$N_b^{\ast}(\mathcal{I},\mathcal{J},\mathcal{K};q)$ in this estimate, which completes the proof of the lemma.
\end{proof}

We now introduce the following quantities
\begin{eqnarray*}
D_b(\mathcal{S};q,a) & = & \# \left\{ (u,v,w) \in \mathcal{S} \cap \mathbb{Z}^3,
\begin{array}{l}
\gcd(v,bw) = 1 \\
uvw \equiv a \imod{q} \\
\end{array}
\right\} \textrm{,}
\end{eqnarray*}
and
\begin{eqnarray*}
D_b^{\ast}(\mathcal{S};q) & = & \frac1{\varphi(q)} \# \left\{ (u,v,w) \in \mathcal{S} \cap \mathbb{Z}^3,
\begin{array}{l}
\gcd(v,bw) = 1 \\
\gcd(uvw,q) = 1 \\
\end{array}
\right\} \textrm{.}
\end{eqnarray*}

\begin{lemma}
\label{lemma tau}
Let $\varepsilon > 0$ be fixed. Recall the definitions of $\vartheta$ and $\lambda$ respectively given in lemmas \ref{pre lemma tau} and \ref{Fri-Iwa lemma 2}. If $T \leq X$ then for $q \leq X^{1/2 + 1/230}$, we have the estimate
\begin{eqnarray*}
D_b(\mathcal{S};q,a) - D_b^{\ast}(\mathcal{S};q) & \ll &
\sigma_{- \lambda}(b)^{1/4} \frac{X^{1/2 + \varepsilon}}{q^{1/600}} \left( \frac{X}{q^2} \right)^{1/2- \vartheta}
+ \frac{X \log(X)}{\varphi(q)} \left( \frac1{L_1} + \frac1{L_2} \right) \textrm{.}
\end{eqnarray*}
\end{lemma}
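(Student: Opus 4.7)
The plan is to mimic the proof of lemma \ref{pre lemma tau} verbatim, substituting the estimate of lemma \ref{Fri-Iwa lemma 2} in place of lemma \ref{Fri-Iwa lemma}. First I would decompose $\mathcal{S}$ into dyadic hyperrectangles $\mathcal{I} \times \mathcal{J} \times \mathcal{K}$ of common ratio $\zeta = 1+\delta$, splitting their contribution to $D_b(\mathcal{S};q,a)$ into a main sum over hyperrectangles fully contained in $\mathcal{S}$ and a boundary sum over those which straddle the boundary of $\mathcal{S}$. On each interior hyperrectangle I would apply lemma \ref{Fri-Iwa lemma 2} to replace $N_b(\mathcal{I},\mathcal{J},\mathcal{K};q,a)$ by $N_b^{\ast}(\mathcal{I},\mathcal{J},\mathcal{K};q)$ at the cost of $O(\sigma_{-\lambda}(b) X^{\varepsilon} E(X,q))$. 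Summing over the $\ll X^{\varepsilon} \delta^{-3}$ hyperrectangles produces a total contribution
\begin{eqnarray*}
\ll \sigma_{-\lambda}(b) \frac{X^{\varepsilon} E(X,q)}{\delta^3}\textrm{,}
\end{eqnarray*}
which plays the role of the analogous term in the proof of lemma \ref{pre lemma tau}.

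The boundary sum is handled exactly as in the proof of lemma \ref{pre lemma tau}: the bound \eqref{bound} still applies (the extra coprimality condition only decreases $N_b$ relative to $N$), so each of the six conditions \eqref{A}, \eqref{B}, \eqref{C}, \eqref{D}, \eqref{E}, \eqref{F} produces a contribution
\begin{eqnarray*}
\ll \frac{\delta X^{1+\varepsilon}}{q} + \frac{X \log(X)}{\varphi(q)} \left( \frac1{L_1} + \frac1{L_2} \right)\textrm{,}
\end{eqnarray*}
using $1/\varphi(q) \ll X^{\varepsilon}/q$ for the first piece. Combining, I obtain
\begin{eqnarray*}
D_b(\mathcal{S};q,a) - D_b^{\ast}(\mathcal{S};q) & \ll & X^{\varepsilon} \left( \frac{\delta X}{q} + \sigma_{-\lambda}(b) \frac{E(X,q)}{\delta^3} \right) + \frac{X \log(X)}{\varphi(q)} \left( \frac1{L_1} + \frac1{L_2} \right)\textrm{.}
\end{eqnarray*}

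The last step is to re-optimize $\delta$. The balance $\delta X/q = \sigma_{-\lambda}(b) E(X,q)/\delta^3$ now yields $\delta^4 = q \sigma_{-\lambda}(b) E(X,q)/X$, so the previous optimal choice gets multiplied by $\sigma_{-\lambda}(b)^{1/4}$, and so does the resulting bound; using the explicit expression \eqref{definition E} of $E(X,q)$ produces the advertised factor $\sigma_{-\lambda}(b)^{1/4} q^{-1/600} (X/q^2)^{1/2 - \vartheta}$. The admissibility condition for this choice is the same $q \leq X^{1/2 + 1/230}$ as in lemma \ref{pre lemma tau}, since $\sigma_{-\lambda}(b)^{1/4}$ is, up to an $X^{\varepsilon}$ factor, harmless for that check. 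No step presents a serious obstacle; the only nontrivial bookkeeping is verifying that the $\sigma_{-\lambda}(b)$ factor of lemma \ref{Fri-Iwa lemma 2} survives the dyadic summation with the clean exponent $1/4$ claimed in the statement, which is exactly what the re-optimization in $\delta$ produces.
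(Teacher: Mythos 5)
Your proposal matches the paper's own proof essentially verbatim: the paper decomposes $\mathcal{S}$ into dyadic hyperrectangles, applies lemma \ref{Fri-Iwa lemma 2} on the interior hyperrectangles to pick up the factor $\sigma_{-\lambda}(b)$, bounds the boundary hyperrectangles using the $b$-free quantity $N(\mathcal{I},\mathcal{J},\mathcal{K};q,a)$ via \eqref{bound} exactly as in lemma \ref{pre lemma tau}, and then says ``it is now plain to see that we can conclude exactly as in the proof of lemma \ref{pre lemma tau}'' --- which is precisely your re-optimization of $\delta$ producing the $\sigma_{-\lambda}(b)^{1/4}$ factor.

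One small remark on your last paragraph: the assertion that the admissibility condition ``is the same $q \leq X^{1/2 + 1/230}$'' is not literally correct. The re-optimized $\delta = \sigma_{-\lambda}(b)^{1/4}q^{-1/600}(q^2/X)^{29/300}$ satisfies $\delta \leq 1$ only under the stricter condition $q \leq X^{1/2+1/230}\sigma_{-\lambda}(b)^{-30/23}$. When $q$ sits between these two thresholds you are forced to take $\delta = 1$, and the resulting term $\sigma_{-\lambda}(b)X^{\varepsilon}E(X,q)$ exceeds the advertised bound by a factor $\sigma_{-\lambda}(b)^{3/4}\delta_0^3$, which is $\gg 1$ there. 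This is a genuine, if minor, imprecision --- but it is one you share with the paper, which brushes it aside in the same sentence. It is harmless for the application (where the claimed bound is in any case no better than the trivial $X^{1+\varepsilon}/q$ in this corner of the range, and where $b \ll B^{O(1)}$ so $\sigma_{-\lambda}(b) \ll B^{\varepsilon}$), but if you wanted a fully rigorous statement you would either shrink the $q$-range slightly or weaken the prefactor to $\sigma_{-\lambda}(b)$ rather than $\sigma_{-\lambda}(b)^{1/4}$.
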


\begin{proof}
We proceed as in the proof of lemma \ref{pre lemma tau}. First, we clearly have
\begin{eqnarray*}
D_b(\mathcal{S};q,a) -  \sum_{\mathcal{I} \times \mathcal{J} \times \mathcal{K} \cap \mathbb{Z}^3 \subset \mathcal{S}}
N_b(\mathcal{I},\mathcal{J}, \mathcal{K};q,a) & \ll &
\sum_{\substack{\mathcal{I} \times \mathcal{J} \times \mathcal{K} \cap \mathbb{Z}^3 \nsubseteq \mathcal{S} \\
\mathcal{I} \times \mathcal{J} \times \mathcal{K} \cap \mathbb{Z}^3 \nsubseteq \mathbb{R}^3 \setminus \mathcal{S}}}
N(\mathcal{I},\mathcal{J},\mathcal{K};q,a) \textrm{.}
\end{eqnarray*}
Furthermore, if we define
\begin{eqnarray*}
D_b(\mathcal{S};q) & = & \sum_{\mathcal{I} \times \mathcal{J} \times \mathcal{K} \cap \mathbb{Z}^3 \subset \mathcal{S}}
N_b^{\ast}(\mathcal{I},\mathcal{J},\mathcal{K};q) \textrm{,}
\end{eqnarray*}
we see that $D_b(\mathcal{S};q)$ is independent of $a$ and moreover by lemma \ref{Fri-Iwa lemma 2}, we get
\begin{eqnarray*}
\sum_{\mathcal{I} \times \mathcal{J} \times \mathcal{K} \cap \mathbb{Z}^3 \subset \mathcal{S}}
N_b(\mathcal{I},\mathcal{J},\mathcal{K};q,a) - D_b(\mathcal{S};q) & \ll & \sigma_{- \lambda}(b)
\frac{X^{\varepsilon} E(X,q)}{\delta^3} \textrm{,}
\end{eqnarray*}
where we have used the fact that the number of hyperrectangles $\mathcal{I} \times \mathcal{J} \times \mathcal{K}$ such that
$\mathcal{I} \times \mathcal{J} \times \mathcal{K} \cap \mathbb{Z}^3 \subset \mathcal{S}$ is $\ll X^{\varepsilon} \delta^{-3}$. It is now plain to see that we can conclude exactly as in the proof of lemma \ref{pre lemma tau}.
\end{proof}

Note that the arithmetic function $\sigma_{- \lambda}(b)^{1/4}$ will not intervene in the estimations of our future error terms since it has average order $O(1)$.

It is certainly worth pointing out that using Smith's (see \cite{MR544261}) or Weinstein's (see \cite{MR630958}) version of Deligne's work to bound two-dimensional Kloosterman sums in the most naive way following the reasoning of \cite[Lemma $1$]{3A1}, we would have only obtained that for $q \leq X^{1/2}$, 
\begin{eqnarray*}
D_b(\mathcal{S};q,a) - D_b^{\ast}(\mathcal{S};q) & \ll & \sigma_{- \lambda}(b)^{1/4} X^{1/2 + \varepsilon}
\left( \frac{X}{q^2} \right)^{1/4} + \frac{X \log(X)}{\varphi(q)} \left( \frac1{L_1} + \frac1{L_2} \right) \textrm{,}
\end{eqnarray*}
which would have not been enough to reach our goal. Actually, it would have not even been enough to get an upper bound of the exact order of magnitude for $N_{U,H}(B)$ because this would have required to replace $X^{\varepsilon}$ by $\log(X)$ in the estimate above which seems out of reach with this method.

\subsection{Arithmetic functions}

\label{arithmetic section}

Along the proof of theorem \ref{Manin}, we will meet the following arithmetic functions,

\begin{eqnarray}
\label{varphi}
\varphi^{\ast}(n) & = & \prod_{p|n} \left( 1 - \frac1{p} \right) \textrm{,} \\
\label{varphi+}
\varphi^{+}(n) & = & \prod_{p|n} \left( 1 + \frac1{p} \right)^{-1} \textrm{,} \\
\label{varphi times}
\varphi^{\times}(n) & = & \prod_{p|n} \left( 1 + \frac1{p} \right) \left( 1 - \frac1{p} \right)^{-1}
\left( 1 + \frac{2}{p} - \frac1{p^2} \right)^{-1} \textrm{,}
\end{eqnarray}
and also, for $a, b \in \mathbb{Z}_{\geq 1}$,
\begin{eqnarray*}
\psi_{a,b}(n) & = &
\begin{cases}
\varphi^{\ast}(n) \varphi^{\ast}(\gcd(a,n))^{-1} & \textrm{ if } \gcd(n,b) = 1 \textrm{,} \\
0 & \textrm{ otherwise,}
\end{cases}
\end{eqnarray*}
and
\begin{eqnarray*}
\psi_{a,b}'(n) & = &
\begin{cases}
\varphi^{\ast}(n)^2 \varphi^{\ast}(\gcd(a,n))^{-2} \varphi^{+}(n) \varphi^{+}(\gcd(a,n))^{-1} & \textrm{ if } \gcd(n,b) = 1 \textrm{,} \\
0 & \textrm{ otherwise.}
\end{cases}
\end{eqnarray*}
The following lemma immediately follows from \cite[Lemma $3$.$2$]{MR2543667}.

\begin{lemma}
\label{arithmetic preliminary 1}
Let $0 \leq t_1 < t_2$ and set $I = [t_1,t_2]$. Let $g : \mathbb{R}_{> 0} \to \mathbb{R}$ be a function having a piecewise continuous derivative on $I$ whose sign changes at most $R_g(I)$ times on $I$. We have
\begin{eqnarray*}
\sum_{n \in I \cap \mathbb{Z}_{>0}} \psi_{a,b}(n) g(n) & = & \zeta(2)^{-1} \Psi(a,b) \int_I g(t) \D t +
O \left( 2^{\omega(b)} \log \left( 1 + t_2 \right) M_I(g) \right) \textrm{,}
\end{eqnarray*}
where
\begin{eqnarray*}
\Psi(a,b) & = & \varphi^{\ast}(b) \frac{\varphi^{+}(ab)}{\varphi^{\ast}(ab)} \textrm{,} \\
M_I(g) & = & (1 + R_g(I)) \sup_{t \in I \cap \mathbb{R}_{> 0}} |g(t)| \textrm{.}
\end{eqnarray*}
\end{lemma}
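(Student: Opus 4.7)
The plan is to deduce the weighted estimate from its $g \equiv 1$ case, which is essentially the content of \cite[Lemma $3$.$2$]{MR2543667}, via Abel summation on the interval $I$. Setting
\begin{eqnarray*}
F(x) & = & \sum_{n \leq x} \psi_{a,b}(n),
\end{eqnarray*}
that reference supplies an asymptotic of the form $F(x) = \zeta(2)^{-1} \Psi(a,b) x + R(x)$ with $|R(x)| \ll 2^{\omega(b)} \log(1+x)$. The main coefficient can be checked directly by writing $\psi_{a,b} = \mathbf{1} \ast h_{a,b}$ with $h_{a,b}$ multiplicative and supported on squarefree integers, and evaluating its associated Dirichlet series at $s = 1$: the Euler product factors as $\varphi^{\ast}(b) \prod_{p \nmid ab}(1 - 1/p^2)$, which is exactly $\zeta(2)^{-1} \varphi^{\ast}(b) \varphi^{+}(ab)/\varphi^{\ast}(ab) = \zeta(2)^{-1} \Psi(a,b)$.

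Next I would apply Abel summation, namely
\begin{eqnarray*}
\sum_{n \in I \cap \mathbb{Z}_{>0}} \psi_{a,b}(n) g(n) & = & F(t_2) g(t_2) - F(t_1) g(t_1) - \int_{t_1}^{t_2} F(t) g'(t) \D t,
\end{eqnarray*}
and substitute $F(t) = \zeta(2)^{-1} \Psi(a,b) t + R(t)$. Integration by parts applied to the linear piece $\zeta(2)^{-1} \Psi(a,b) t$ reassembles exactly into $\zeta(2)^{-1} \Psi(a,b) \int_I g(t) \D t$, producing the announced main term.

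It then remains to bound the contribution of $R$, which consists of the two boundary terms $R(t_i) g(t_i)$ and the integral $\int_{t_1}^{t_2} R(t) g'(t) \D t$. Each boundary term is at most $\sup_I |R| \cdot \sup_I |g|$, and the integral is at most $\sup_I |R| \cdot \int_{t_1}^{t_2} |g'(t)| \D t$. Using $|R(t)| \ll 2^{\omega(b)} \log(1+t_2)$ throughout, I would then exploit the hypothesis that $g'$ changes sign at most $R_g(I)$ times on $I$: this decomposes $I$ into at most $R_g(I)+1$ subintervals of monotonicity of $g$, and on each such subinterval the $L^1$-norm of $g'$ is simply the difference of the values of $g$ at the endpoints, which is at most $2 \sup_I |g|$. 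Summing,
\begin{eqnarray*}
\int_{t_1}^{t_2} |g'(t)| \D t & \leq & 2(R_g(I)+1) \sup_I |g| \ \ll \ M_I(g),
\end{eqnarray*}
and combining yields the claimed error $O\bigl(2^{\omega(b)} \log(1+t_2) M_I(g)\bigr)$.

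The only delicate point is the total-variation bound via the sign-change hypothesis on $g'$, but there is no genuine analytic obstacle here: the lemma is a routine transfer of the $g \equiv 1$ asymptotic for $F$ to the weighted sum by partial summation.
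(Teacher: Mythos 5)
Your proof is correct. The paper itself offers no argument for this lemma beyond the single phrase that it ``immediately follows from [Lemma~3.2]{MR2543667}''; the cited lemma in that reference is already stated in weighted form with exactly this kind of $M_I(g)$ error term, so the paper is simply pointing at a black box. What you have done is reconstruct that black box: first verify the $g\equiv 1$ case by writing $\psi_{a,b}=\mathbf{1}\ast h_{a,b}$ with $h_{a,b}$ multiplicative, squarefree-supported, and satisfying $h_{a,b}(p)=-1$ for $p\mid b$, $h_{a,b}(p)=0$ for $p\mid a$, $p\nmid b$, and $h_{a,b}(p)=-1/p$ otherwise (this correctly yields both the main constant $\zeta(2)^{-1}\Psi(a,b)$ and the $2^{\omega(b)}\log(1+x)$ error), and then transfer by Abel summation. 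The key step you flag as delicate --- bounding $\int_I|g'|$ by $2(R_g(I)+1)\sup_I|g|$ using the sign-change hypothesis --- is exactly the right way to turn the pointwise error on $F$ into the stated $M_I(g)$-type error, and the boundary terms of the partial summation are trivially absorbed. So the argument is sound; it is a genuine filling-in rather than a different route, and anyone not wishing to chase the reference would benefit from seeing it written out as you have.
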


Our goal in this section is to get a similar result for $\psi_{a,b}'$. Recall the definition \eqref{sigma} of $\sigma_{- \lambda}$. The proof of the following lemma is straightforward.

\begin{lemma}
\label{arithmetic preliminary 2}
Let $0 < \gamma \leq 1$ be fixed. We have the estimate
\begin{eqnarray*}
\sum_{n \leq X} \psi_{a,b}'(n)& = & \Xi \Psi'(a,b) X + O_{\gamma} \left( \sigma_{- \gamma/2}(b) X^{\gamma} \right) \textrm{,}
\end{eqnarray*}
where
\begin{eqnarray}
\label{Xi}
\Xi & = & \prod_p \varphi^{\times}(p)^{-1} \textrm{,} \\
\notag
\Psi'(a,b) & = & \varphi^{\ast}(b) \varphi^{\times}(ab)  \textrm{.}
\end{eqnarray}
\end{lemma}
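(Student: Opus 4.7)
The plan is to exploit the multiplicativity of $n \mapsto \psi_{a,b}'(n)$ and reduce to a standard convolution estimate. Inspecting the definition, one sees that for every prime power $p^k$ with $k \geq 1$,
\[
\psi_{a,b}'(p^k) \;=\; \begin{cases} (1-1/p)^2 (1+1/p)^{-1} & \text{if } p \nmid ab, \\ 1 & \text{if } p \mid a \text{ and } p \nmid b, \\ 0 & \text{if } p \mid b, \end{cases}
\]
and in particular the value is independent of $k$. This is precisely what we need to write $\psi_{a,b}' = \mathbf{1} \ast g_{a,b}$ with $g_{a,b}$ multiplicative and supported on squarefree integers, so that $g_{a,b}(p) = \psi_{a,b}'(p) - 1$ and $g_{a,b}(p^k) = 0$ for $k \geq 2$.

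I would then write
\[
\sum_{n \leq X} \psi_{a,b}'(n) \;=\; \sum_{d \leq X} g_{a,b}(d) \left\lfloor \frac{X}{d} \right\rfloor \;=\; X \sum_{d=1}^{\infty} \frac{g_{a,b}(d)}{d} \;-\; X \sum_{d > X} \frac{g_{a,b}(d)}{d} \;+\; O\!\left( \sum_{d \leq X} |g_{a,b}(d)| \right),
\]
and identify the main constant $\sum_{d} g_{a,b}(d)/d = \prod_p \bigl(1 + g_{a,b}(p)/p\bigr)$ with $\Xi \Psi'(a,b)$. The key local computation is at primes $p \nmid ab$: one checks
\[
1 + \frac{g_{a,b}(p)}{p} \;=\; \frac{p^3 + p^2 - 3p + 1}{p^2(p+1)},
\]
and the algebraic identity $p^3 + p^2 - 3p + 1 = (p-1)(p^2 + 2p - 1)$ recognizes this as exactly $\varphi^{\times}(p)^{-1}$. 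At primes $p \mid a$, $p \nmid b$ one has $g_{a,b}(p) = 0$, and at $p \mid b$ one has $g_{a,b}(p) = -1$, giving the local factor $1-1/p$. Combining the three cases yields
\[
\prod_p \!\left(1 + \frac{g_{a,b}(p)}{p}\right) = \varphi^{\ast}(b) \prod_{p \nmid ab} \varphi^{\times}(p)^{-1} = \Xi \, \varphi^{\ast}(b) \, \varphi^{\times}(ab) = \Xi \Psi'(a,b),
\]
as desired.

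For the error term I would apply Rankin's trick. Since $|g_{a,b}(p)| \ll 1/p$ at primes $p \nmid ab$, vanishes at $p \mid a$, $p \nmid b$, and equals $1$ at $p \mid b$, one has
\[
\sum_{d \geq 1} \frac{|g_{a,b}(d)|}{d^{\gamma}} \;=\; \prod_p \!\left(1 + \frac{|g_{a,b}(p)|}{p^{\gamma}}\right) \;\ll_{\gamma}\; \prod_{p \mid b} \!\left(1 + \frac{1}{p^{\gamma}}\right) \;\leq\; \sigma_{-\gamma}(b) \;\leq\; \sigma_{-\gamma/2}(b),
\]
the convergent product over $p \nmid ab$ being absorbed into the implied constant. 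Bounding $\sum_{d \leq X}|g_{a,b}(d)| \leq X^{\gamma} \sum_d |g_{a,b}(d)|/d^{\gamma}$ and $|X \sum_{d > X} g_{a,b}(d)/d| \leq X^{\gamma} \sum_d |g_{a,b}(d)|/d^{\gamma}$ gives the announced estimate.

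The only nontrivial step is the Euler factor identification at primes $p \nmid ab$, where the factorization $p^3 + p^2 - 3p + 1 = (p-1)(p^2 + 2p - 1)$ is precisely what makes the constant $\varphi^{\times}(p)^{-1}$ appear. Everything else is a routine convolution computation, in line with the paper's remark that the proof is straightforward.
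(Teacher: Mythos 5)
Your proposal is correct and follows essentially the same route as the paper: both decompose $\psi_{a,b}' = \mathbf{1} \ast g_{a,b}$ with $g_{a,b}$ multiplicative and supported on squarefrees, identify the constant via the Euler product of $\sum_d g_{a,b}(d)/d$ (with the same local factorization $p^3+p^2-3p+1 = (p-1)(p^2+2p-1)$ recovering $\varphi^{\times}(p)^{-1}$), and control the error through $\sum_d |g_{a,b}(d)|/d^{\gamma} \ll_{\gamma} \sigma_{-\gamma/2}(b)$. The paper compresses the tail-plus-floor bookkeeping into the single estimate $[t]=t+O(t^{\gamma})$ and bounds $|g_{a,b}(n)|$ by $3^{\omega(n)}\gcd(n,b)/n$ before summing, but these are cosmetic variations on what you wrote.
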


\begin{proof}
Writing $\psi_{a,b}' = (\psi_{a,b}' \ast \mu) \ast 1$, we obtain
\begin{eqnarray}
\label{convolution}
\sum_{n \leq X} \psi_{a,b}'(n) & = & \sum_{d = 1}^{+ \infty} (\psi_{a,b}' \ast \mu)(d) \left[ \frac{X}{d} \right] \textrm{.}
\end{eqnarray}
Let us calculate the Dirichlet convolution of $\psi_{a,b}'$ with the Möbius function $\mu$. We immediately get
\begin{eqnarray*}
(\psi_{a,b}' \ast \mu)(n) & = &
\prod_{p^\nu \parallel n} \left( \psi_{a,b}' \left( p^\nu \right) - \psi_{a,b}' \left( p^{\nu - 1} \right) \right) \textrm{.}
\end{eqnarray*}
We have $\psi_{a,b}'(1) = 1$ and for any $\nu \geq 1$,
$$\psi_{a,b}' \left( p^\nu \right) = \psi_{a,b}'(p) =
\begin{cases}
\left( 1 - 1/p \right)^{2} \left( 1 + 1/p \right)^{-1} & \textrm{ if } p \nmid ab \textrm{,} \\
1 & \textrm{ if } p|a, p \nmid b \textrm{,} \\
0 & \textrm{ if } p|b \textrm{.}
\end{cases}
$$
A short calculation thereby shows that if $\gcd(n,a)|b$ then
\begin{eqnarray*}
(\psi_{a,b}' \ast \mu)(n) & = &
\mu(n) \prod_{p|n, p\nmid b} \left( 1- \left( 1 - \frac1{p} \right)^{2} \left( 1 + \frac1{p} \right)^{-1} \right) \\
& = & \mu(n) 3^{\omega(n) - \omega(\gcd(n,b))} \frac{\gcd(n,b)}{n}
\prod_{p|n, p\nmid b} \left( 1 + \frac1{p} \right)^{-1} \left( 1 - \frac1{3p} \right) \textrm{,}
\end{eqnarray*}
and $(\psi_{a,b}' \ast \mu)(n) = 0$ otherwise. Let $0 < \gamma \leq 1$ be fixed. Let us use the elementary estimate
$[t] = t + O(t^{\gamma})$ for $t = X /d$ in \eqref{convolution}. Since
\begin{eqnarray*}
|(\psi_{a,b}' \ast \mu)(n)| & \leq & 3^{\omega(n)} \frac{\gcd(n,b)}{n} \textrm{,}
\end{eqnarray*}
we easily get
\begin{eqnarray*}
\sum_{d = 1}^{+ \infty} \frac{| (\psi_{a,b}'  \ast \mu)(d) |}{d^{\gamma}} & \leq & \sum_{k|b} \sum_{\ell = 1}^{+ \infty}
3^{\omega(k\ell)} \frac{k}{(k\ell)^{1 + \gamma}} \\
& \ll & \sum_{k|b} \frac{3^{\omega(k)}}{k^{\gamma}} \\
& \ll & \sigma_{- \gamma/2}(b) \textrm{,}
\end{eqnarray*}
where we have used $3^{\omega(k)} \ll k^{\gamma / 2}$. We have proved the estimate
\begin{eqnarray*}
\sum_{n \leq X} \psi_{a,b}'(n) & = & X \sum_{d = 1}^{+ \infty} \frac{(\psi_{a,b}' \ast \mu)(d)}{d}
+ O \left( \sigma_{- \gamma/2}(b) X^{\gamma} \right) \textrm{.}
\end{eqnarray*}
Finally, a straigthforward calculation yields
\begin{eqnarray*}
\sum_{d = 1}^{+ \infty} \frac{(\psi_{a,b}'  \ast \mu)(d)}{d} & = & \prod_{p|b} \left( 1 - \frac1{p} \right)
\prod_{p \nmid ab}\left( 1 + \frac1{p} \right)^{-1} \left( 1 - \frac1{p} \right)
\left( 1 + \frac{2}{p} - \frac1{p^2} \right) \textrm{,}
\end{eqnarray*}
which completes the proof.
\end{proof}

Using partial summation and the estimate of lemma \ref{arithmetic preliminary 2} exactly as in the proof of
\cite[Lemma $6$]{3A1}, we get the following result.

\begin{lemma}
\label{arithmetic preliminary 2'}
Let $0 < \gamma \leq 1$ be fixed. With the notations of lemma \ref{arithmetic preliminary 1}, we have
\begin{eqnarray*}
\sum_{n \in I \cap \mathbb{Z}_{>0}} \psi_{a,b}'(n) g(n) & = & \Xi \Psi'(a,b) \int_I g(t) \D t +
O_{\gamma} \left( \sigma_{- \gamma/2}(b) t_2^{\gamma} M_I(g) \right) \textrm{.}
\end{eqnarray*}
\end{lemma}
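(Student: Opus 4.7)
The proof is a direct Abel summation argument applied to the asymptotic of lemma \ref{arithmetic preliminary 2}, closely parallel to the analogous derivation of lemma \ref{arithmetic preliminary 1} from the standard summatory formula for $\psi_{a,b}$. My plan is as follows.

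Write $S(t) = \sum_{n \leq t} \psi_{a,b}'(n) = \Xi \Psi'(a,b)\, t + R(t)$, where by lemma \ref{arithmetic preliminary 2} the remainder satisfies $|R(t)| \ll_{\gamma} \sigma_{-\gamma/2}(b)\, t^{\gamma}$ uniformly for $t \geq 1$ (and $R(t) = 0$ for $t < 1$). Since $g$ has a piecewise continuous derivative on $I = [t_1, t_2]$, Abel summation gives
\begin{equation*}
\sum_{n \in I \cap \mathbb{Z}_{>0}} \psi_{a,b}'(n)\, g(n) = S(t_2) g(t_2) - S(t_1^-) g(t_1) - \int_{t_1}^{t_2} S(t)\, g'(t)\, \D t .
\end{equation*}
Substituting the decomposition of $S$, the main term combines via integration by parts into $\Xi \Psi'(a,b) \int_I g(t)\, \D t$, so what remains is to control the contribution of $R$.

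The two boundary terms are immediately bounded by $O_\gamma(\sigma_{-\gamma/2}(b)\, t_2^{\gamma}\, \sup_I |g|)$, which is absorbed by $M_I(g)$. For the integral $\int_{t_1}^{t_2} R(t)\, g'(t)\, \D t$, I split $I$ into the at most $1 + R_g(I)$ subintervals on which $g'$ has constant sign. On such a subinterval $[a_k, b_k]$,
\begin{equation*}
\left| \int_{a_k}^{b_k} R(t)\, g'(t)\, \D t \right| \leq \sup_{[a_k, b_k]} |R(t)| \cdot \left| g(b_k) - g(a_k) \right| \ll_\gamma \sigma_{-\gamma/2}(b)\, t_2^{\gamma}\, \sup_I |g|,
\end{equation*}
and summing over the subintervals yields a total contribution $\ll_\gamma \sigma_{-\gamma/2}(b)\, t_2^{\gamma}\, (1+R_g(I)) \sup_I |g| = \sigma_{-\gamma/2}(b)\, t_2^{\gamma}\, M_I(g)$. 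Combining everything completes the proof.

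There is no real obstacle here: the argument is entirely routine partial summation, and the only point requiring a small amount of care is the handling of $g'$ on subintervals of constant sign, which is exactly what the quantity $R_g(I)$ in the statement is designed to control. The factor $\sigma_{-\gamma/2}(b)$ is inherited unchanged from lemma \ref{arithmetic preliminary 2}.
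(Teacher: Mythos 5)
Your proposal is correct and follows exactly the route the paper indicates: the paper simply states that the lemma follows ``using partial summation and the estimate of lemma \ref{arithmetic preliminary 2} exactly as in the proof of [Lemma 6, 3A1],'' and your Abel-summation argument, with the splitting of $I$ into at most $1+R_g(I)$ subintervals of constant sign for $g'$ to control $\int R\, g'$, is precisely that standard derivation.
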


\section{The universal torsor}

\label{torsor section}

In this section we derive a bijection between the set of rational points of bounded height on $U$ we aim to investigate and a certain set of integral points lying on the hypersurface defined in the introduction. Our choice of notation might seem awkward but it is directed by our wish to follow the notation used by Derenthal in \cite{D-hyper}. It is immediate to notice that if
$(x_0:x_1:x_2:x_3) \in V(\mathbb{Q})$ then $(x_0:x_1:x_2:x_3) \in U(\mathbb{Q})$ if and only if $x_0 x_1 x_2 x_3 \neq 0$. Moreover, since $\mathbf{x} = - \mathbf{x} \in \mathbb{P}^3$, we can assume that $x_1 > 0$. We thus let
$(x_0,x_1,x_2,x_3) \in \mathbb{Z}_{\neq 0} \times \mathbb{Z}_{>0} \times \mathbb{Z}_{\neq 0}^2$ be such that
$\gcd(x_0,x_1,x_2,x_3) = 1$ and
\begin{eqnarray*}
x_3^2 (x_1 + x_3) + x_0 x_1 x_2 & = & 0 \textrm{.}
\end{eqnarray*}
Note that this equation together with the condition $\gcd(x_0,x_1,x_2,x_3) = 1$ imply that we actually have $\gcd(x_0,x_1,x_2) = 1$. Let $\eta_1 = \gcd(x_0,x_2,x_3)$, $y_0 = \gcd(x_1,x_2,x_3)$ and $y_2 = \gcd(x_0,x_1,x_3)$. Given that $\gcd(x_0,x_1,x_2) = 1$, we can write
\begin{eqnarray*}
x_0 & = & \eta_1 y_2 x_0' \textrm{,} \\
x_1 & = & y_0 y_2 x_1' \textrm{,} \\
x_2 & = & \eta_1 y_0 x_2' \textrm{,} \\
x_3 & = & \eta_1 y_0 y_2 x_3' \textrm{,}
\end{eqnarray*}
where $\eta_1, y_0, y_2, x_1' > 0$ and $\gcd(y_i,x_i') = 1$ for $i \in \{1, 2\}$, $\gcd(\eta_1,y_0y_2x_1') = 1$, $\gcd(y_0,y_2) = 1$ and $\gcd(x_i',x_j',x_3') = 1$ for $i,j \in \{0,1,2 \}$ and $i \neq j$. The equation becomes
\begin{eqnarray*}
y_0 y_2 x_3'^2 (x_1' + \eta_1 x_3') + x_0' x_1' x_2' & = & 0 \textrm{.}
\end{eqnarray*}
Letting $\eta_2 = \gcd(y_0,x_2')$, we can write $y_0 = \eta_2 \eta_3$ and $x_2' = \eta_2 x_2''$ where $\eta_2, \eta_3 > 0$ and $\gcd(\eta_3,x_2'') = 1$. We then see that $\eta_3|x_1'$ and thus we can write $x_1' = \eta_3 x_1''$ for some $x_1'' > 0$. Similarly, if we set $\eta_4 = \gcd(y_2,x_0')$, we can write $y_2 = \eta_4 \eta_5$ and $x_0' = \eta_4 x_0''$ for
$\eta_4, \eta_5 > 0$ satisfying $\gcd(\eta_5,x_0'') = 1$. We then notice that $\eta_5|x_1''$ and we write $x_1'' = \eta_5 z_1$ for some $z_1 > 0$. This leads to the equation
\begin{eqnarray*}
x_3'^2 (\eta_3 \eta_5 z_1 + \eta_1 x_3') + x_0'' z_1 x_2'' & = & 0 \textrm{.}
\end{eqnarray*}
Now, let $\eta_6 = \gcd(x_0'',x_3')$, we see that $\eta_6^2|x_0''$ and thereby we can write $x_0'' = \eta_6^2 \eta_9$ and
$x_3' = \eta_6 x_3''$ where $\eta_6 > 0$ and $\gcd(\eta_9,x_3'') = 1$. We can also set $\eta_8 = \pm \gcd(x_2'',x_3'')$ and write $x_2'' = \eta_8^2 \eta_{10}$ and $x_3'' = \eta_7 \eta_8$ for some $\eta_7 > 0$ such that $\gcd(\eta_{10},\eta_7) = 1$. We get the equation
\begin{eqnarray*}
\eta_7^2 (\eta_3 \eta_5 z_1 + \eta_1 \eta_6 \eta_7 \eta_8) + \eta_9 \eta_{10} z_1 & = & 0 \textrm{.}
\end{eqnarray*}
Writing it as
\begin{eqnarray*}
\eta_1 \eta_6 \eta_7^3 \eta_8 + z_1 \left( \eta_3 \eta_5 \eta_7^2 + \eta_9 \eta_{10} \right) & = & 0 \textrm{,}
\end{eqnarray*}
and reminding that $\gcd(z_1, \eta_1 \eta_6 \eta_8) = 1$ and $\gcd(\eta_7, \eta_9 \eta_{10}) = 1$, we see that we have at once
$z_1|\eta_7^3$ and $\eta_7^3|z_1$. Consequently, since $z_1 > 0$, we have $z_1 = \eta_7^3$ and dividing by $\eta_7^3$, we finally obtain the desired equation
\begin{eqnarray}
\label{torsor}
\eta_1 \eta_6 \eta_8 + \eta_3 \eta_5 \eta_7^2 + \eta_9 \eta_{10} & = & 0 \textrm{.}
\end{eqnarray}
It is an easy task to check that the coprimality conditions we have derived along our investigation imply that the two monomials $\eta_3 \eta_5 \eta_7^2$ and $\eta_9 \eta_{10}$ are coprime. Thereby, given the equation \eqref{torsor}, we deduce that the three monomials $\eta_3 \eta_5 \eta_7^2$, $\eta_1 \eta_6 \eta_8$ and $\eta_9 \eta_{10}$ are actually pairwise coprime. With this remark in mind, a little thought reveals that all the coprimality conditions can be rewritten as
\begin{eqnarray}
\label{gcd1}
& & \gcd(\eta_9, \eta_1 \eta_2 \eta_3 \eta_5 \eta_6 \eta_7 \eta_8) = 1 \textrm{,} \\
\label{gcd2}
& & \gcd(\eta_1, \eta_2 \eta_3 \eta_4 \eta_5 \eta_7 \eta_{10}) = 1 \textrm{,} \\
\label{gcd3}
& & \gcd(\eta_6, \eta_2 \eta_3 \eta_5 \eta_7 \eta_{10}) = 1 \textrm{,} \\
\label{gcd4}
& & \gcd(\eta_8, \eta_3 \eta_4 \eta_5 \eta_6 \eta_7 \eta_{10}) = 1 \textrm{,} \\
\label{gcd5}
& & \gcd(\eta_7, \eta_2 \eta_4 \eta_{10}) = 1 \textrm{,} \\
\label{gcd6}
& & \gcd(\eta_2 \eta_3, \eta_4 \eta_5) = 1 \textrm{,} \\
\label{gcd7}
& & \gcd(\eta_{10}, \eta_3 \eta_4 \eta_5) = 1 \textrm{.}
\end{eqnarray}
Let $\mathcal{T}(B)$ be the number of $(\eta_1, \dots, \eta_{10}) \in \mathbb{Z}_{>0}^7 \times \mathbb{Z}_{\neq 0}^3$ satisfying the equation \eqref{torsor}, the coprimality conditions \eqref{gcd1}, \eqref{gcd2}, \eqref{gcd3}, \eqref{gcd4}, \eqref{gcd5}, \eqref{gcd6}, \eqref{gcd7} and the height conditions
\begin{eqnarray}
\label{height1}
\eta_1 \eta_4^2 \eta_5 \eta_6^2 |\eta_9| & \leq & B \textrm{,} \\
\label{height2}
\eta_2 \eta_3^2 \eta_4 \eta_5^2 \eta_7^3 & \leq & B \textrm{,} \\
\label{height3}
\eta_1 \eta_2^2 \eta_3 \eta_8^2 |\eta_{10}| & \leq & B \textrm{,} \\
\label{height4}
\eta_1 \eta_2 \eta_3 \eta_4 \eta_5 \eta_6 \eta_7 |\eta_8| & \leq & B \textrm{.}
\end{eqnarray}
We sum up the fruit of our investigation in the following lemma.

\begin{lemma}
\label{T}
We have the equality
\begin{eqnarray*}
N_{U,H}(B) & = & \# \mathcal{T}(B) \textrm{.}
\end{eqnarray*}
\end{lemma}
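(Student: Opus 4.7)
The plan is to prove the bijection of lemma \ref{T} in both directions, drawing on the derivation already presented. The forward map, sending $x = (x_0:x_1:x_2:x_3) \in U(\mathbb{Q})$ to a tuple $(\eta_1, \ldots, \eta_{10})$, has essentially been constructed above. Starting from a primitive integer representative with $x_1 > 0$ (allowed because $\mathbf{x} = -\mathbf{x}$ in $\mathbb{P}^3$), each $\eta_j$ is defined as a gcd of quantities already introduced (with a sign choice only for $\eta_8$), so the tuple is uniquely determined; the torsor equation \eqref{torsor} and the coprimality conditions \eqref{gcd1}--\eqref{gcd7} have been derived. What remains to be recorded is the explicit expression of the original coordinates in terms of the $\eta_j$, obtained by unwinding the substitutions:
\begin{align*}
x_0 &= \eta_1 \eta_4^2 \eta_5 \eta_6^2 \eta_9, & x_1 &= \eta_2 \eta_3^2 \eta_4 \eta_5^2 \eta_7^3, \\
x_2 &= \eta_1 \eta_2^2 \eta_3 \eta_8^2 \eta_{10}, & x_3 &= \eta_1 \eta_2 \eta_3 \eta_4 \eta_5 \eta_6 \eta_7 \eta_8.
\end{align*}
With these formulas, the height conditions \eqref{height1}--\eqref{height4} translate directly into $\max(|x_0|, x_1, |x_2|, |x_3|) \leq B$.

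For the reverse map I take $(\eta_j) \in \mathcal{T}(B)$, define the $x_i$ by the four displayed formulas, and verify that the resulting quadruple represents a point of $U(\mathbb{Q})$ of height at most $B$. Positivity of $x_1$, non-vanishing of $x_0 x_1 x_2 x_3$, and the height bound are immediate. The surface equation follows by observing that $x_3^2(x_1 + x_3) + x_0 x_1 x_2$ factors as $\eta_1^2 \eta_2^3 \eta_3^3 \eta_4^3 \eta_5^3 \eta_6^2 \eta_7^3 \eta_8^2 \cdot (\eta_3 \eta_5 \eta_7^2 + \eta_1 \eta_6 \eta_8 + \eta_9 \eta_{10})$, which vanishes by \eqref{torsor}.

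The only step requiring real care, and hence the main obstacle, is the verification that $\gcd(x_0, x_1, x_2, x_3) = 1$. I would proceed by contradiction: if a prime $p$ divides every $x_i$, then $p$ must divide at least one of the $\eta_j$ appearing in each of the four monomials, and a finite case analysis matching these divisibilities against the seven coprimality conditions \eqref{gcd1}--\eqref{gcd7} rules out every possibility. This is essentially bookkeeping, but it is where the full content of the coprimality list is used. Once primitivity is secured, bijectivity follows: applied to the $x_i$ reconstructed from $(\eta_j)$, the forward gcd-extraction procedure recovers the original $\eta_j$ because at each stage the gcd being extracted is forced by the coprimality relations, while the sign ambiguity in $\eta_8$ is fixed by the sign of $x_3$. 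This yields the claimed equality $N_{U,H}(B) = \#\mathcal{T}(B)$.
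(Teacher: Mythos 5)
Your proposal is correct and follows essentially the same route as the paper: the gcd-extraction in Section \ref{torsor section} defines the forward map, and the inverse is obtained by reconstructing the $x_i$ from the monomials appearing in the height conditions \eqref{height1}--\eqref{height4}. Your explicit formulas for the $x_i$, the factorization of $x_3^2(x_1+x_3)+x_0x_1x_2$, and the observation that primitivity reduces to a prime-by-prime check against \eqref{gcd1}--\eqref{gcd7} are all accurate (for instance, if $p \mid \eta_5$ and $p \mid \eta_7$, which is not excluded by the stated coprimality list, one still finds $p \nmid x_2$ via \eqref{gcd2}, \eqref{gcd4}--\eqref{gcd7}). The paper simply records the derivation and states the lemma as a summary, leaving the inverse verification implicit; your write-up makes that half explicit, which is a welcome completion rather than a different method.
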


\section{Calculation of Peyre's constant}

The constant $c_{V,H}$ appearing in the statement of theorem \ref{Manin} and whose conjectural interpretation is due to Peyre (see \cite{MR1340296}) is expected to be equal to the following product
\begin{eqnarray*}
c_{V,H} & = & \alpha(\widetilde{V}) \beta(\widetilde{V}) \omega_H(\widetilde{V}) \textrm{,}
\end{eqnarray*}
where we recall that $\widetilde{V}$ denotes the minimal desingularization of $V$. This section is devoted to the investigation of these three quantities. Let $\Pic(\widetilde{V})_{\mathbb{R}} = \Pic(\widetilde{V}) \otimes_{\mathbb{Z}} \mathbb{R}$ and let
$\Lambda_{\eff}(\widetilde{V})$ be the effective cone of $\widetilde{V}$, that is the cone generated by the classes of effective divisors in $\Pic(\widetilde{V})_{\mathbb{R}}$. Let also $\Lambda^{\vee}_{\eff}(\widetilde{V})$ be the dual of
$\Lambda_{\eff}(\widetilde{V})$ with respect to the intersection form. Finally let $-K_{\widetilde{V}}$ be the anticanonical divisor of $\widetilde{V}$. By definition
\begin{eqnarray*}
\alpha(\widetilde{V}) & = & \vol ( P(\widetilde{V}) ) \textrm{,}
\end{eqnarray*}
where
\begin{eqnarray*}
P(\widetilde{V}) & = & \{ \mathbf{x} \in \Lambda^{\vee}_{\eff}(\widetilde{V}), ( -K_{\widetilde{V}}, \mathbf{x} ) = 1 \} \textrm{,}
\end{eqnarray*}
and where the measure on the hyperplane
\begin{eqnarray*}
& & \{ \mathbf{x} \in \Pic(\widetilde{V})_{\mathbb{R}}^{\vee}, ( -K_{\widetilde{V}}, \mathbf{x} ) = 1 \}
\end{eqnarray*}
is defined by the $6$-form $\D \mathbf{x}$ such that $\D \mathbf{x} \wedge \D \omega = \D \mathbf{y}$, where $\D \omega$ is the linear form defined by $-K_{\widetilde{V}}$ on $\Pic(\widetilde{V})_{\mathbb{R}}^{\vee}$ and $\D \mathbf{y}$ is the natural Lebesgue measure on $\Pic(\widetilde{V})_{\mathbb{R}}^{\vee}$. This constant can be easily computed using the work of Derenthal, Joyce and Teitler \cite[Theorem $1$.$3$]{MR2377367} and we find
\begin{eqnarray*}
\alpha(\widetilde{V}) & = & \frac1{120} \cdot \frac1{\# W(\mathbf{A}_2)^2 \ \# W(\mathbf{A}_1)} \\
& = & \frac1{8640} \textrm{,}
\end{eqnarray*}
where $W(\mathbf{A}_n)$ stands for the Weyl group associated to the Dynkin diagram of the singularity $\mathbf{A}_n$ and where we have used the fact that $W(\mathbf{A}_n) \simeq \mathfrak{S}_{n+1}$ and therefore $\# W(\mathbf{A}_n) = (n+1)!$. Note that the value given in \cite[Table $6$]{MR2318651} was misprinted. In addition, $\beta(\widetilde{V})$ is defined by
\begin{eqnarray*}
\beta(\widetilde{V}) & = & \# H^1 ( \Gal (\overline{\mathbb{Q}}/\mathbb{Q}),\Pic_{\overline{\mathbb{Q}}}(\widetilde{V})) \textrm{,}
\end{eqnarray*}
and here $\beta(\widetilde{V}) = 1$ since $V$ is split over $\mathbb{Q}$. Finally, again in the particular case of a surface split over $\mathbb{Q}$, $\omega_H(\widetilde{V})$ is defined by 
\begin{eqnarray*}
\omega_H(\widetilde{V}) & = &
\lim_{s \rightarrow 1} \left( (s-1)^{\rho} \zeta(s)^{\rho} \right) \omega_{\infty}
\prod_p \left( 1 - \frac1{p} \right)^{\rho} \omega_p \\
& = & \omega_{\infty} \prod_p \left( 1 - \frac1{p} \right)^{7} \omega_p \textrm{,}
\end{eqnarray*}
where we recall that $\rho = \rho_{\widetilde{V}}$ denotes the rank of the Picard group of $\widetilde{V}$ and where $\omega_{\infty}$ and $\omega_p$ are respectively the archimedean and $p$-adic densities. Let $\mathbf{x} = (x_0,x_1,x_2,x_3)$ and
$f(\mathbf{x}) = x_3^2 (x_1 + x_3) + x_0 x_1 x_2$. The densities $\omega_p$ are given by
\begin{eqnarray*}
\omega_p & = & \lim_{n \rightarrow + \infty} \frac{ \# \left\{ \mathbf{x} \imod{p^n}, f(\mathbf{x}) \equiv 0 \imod{p^n}  \right\} }{p^{n(\dim(\widetilde{V})+1)}} \textrm{.}
\end{eqnarray*}
By a result of Loughran \cite[Lemma 2.3]{Loughran}, we have
\begin{eqnarray*}
\omega_p & = & 1 + \frac{7}{p} + \frac1{p^2} \textrm{.}
\end{eqnarray*}
Let us express $\omega_{\infty}$. We parametrize the points of $V$ with $x_1$, $x_2$ and $x_3$. We have
\begin{eqnarray*}
\frac{\partial f}{\partial x_0}(\mathbf{x}) & = & x_1 x_2 \textrm{,}
\end{eqnarray*}
and since $\mathbf{x} = - \mathbf{x} \in \mathbb{P}^3$, we get
\begin{eqnarray*}
\omega_{\infty} & = & 2 \int \int \int_{f(\mathbf{x}) = 0, 0 < |x_0|, x_1, x_2, |x_3| \leq 1} 
\left(\frac{\partial f}{\partial x_0}(\mathbf{x}) \right)^{-1} \D x_1 \D x_2 \D x_3 \\
& = & 2 \int \int \int_{0 < x_3^2 \left| x_1 + x_3 \right| /x_1 x_2, x_1, x_2, |x_3| \leq 1}
\frac{\D x_1 \D x_2 \D x_3}{x_1x_2} \textrm{.}
\end{eqnarray*}
Define the real-valued function
\begin{eqnarray}
\label{equation h}
h & : & (t_8,t_6,t_7) \mapsto \max \left\{ t_6^2 \left| t_7^2 + t_6t_8 \right|, t_7, |t_8|, t_6 t_7 |t_8| \right\} \textrm{.}
\end{eqnarray}
The change of variables given by $x_1 = t_7^3$, $x_2 = t_8^2$ and $x_3 = t_6 t_7 t_8$ yields the expression
\begin{eqnarray}
\label{omega}
\omega_{\infty} & = & 12 \int \int \int_{t_6, t_7 > 0, h(t_8,t_6,t_7) \leq 1} \D t_8 \D t_6 \D t_7 \textrm{.}
\end{eqnarray}

\section{Proof of the main theorem}

\subsection{Restriction of the domain}

The following lemma proves that we can assume that certain variables are greater in absolute value than a fixed power of $\log(B)$.

\begin{lemma}
\label{lemmalog}
Let $\mathcal{M}(B)$ be the overall contribution to $N_{U,H}(B)$ coming from the $(\eta_1, \dots ,\eta_{10}) \in \mathcal{T}(B)$ such that for a certain $i \neq 9, 10$, $\eta_i$ is subject to the condition $|\eta_i| \leq \log(B)^A$ where $A > 0$ is any fixed constant. We have the estimate
\begin{eqnarray*}
\mathcal{M}(B) & \ll_A & B \log(B)^5 \log(\log(B)) \textrm{.}
\end{eqnarray*}
\end{lemma}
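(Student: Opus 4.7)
By a union bound over $i \in \{1,2,\dots,8\}$, it suffices to show that for each such $i$, the contribution $\mathcal{M}_i(B)$ coming from tuples $(\eta_1,\dots,\eta_{10}) \in \mathcal{T}(B)$ with $|\eta_i| \leq \log(B)^A$ satisfies $\mathcal{M}_i(B) \ll_A B \log(B)^5 \log(\log(B))$. Heuristically, the full count $\# \mathcal{T}(B)$ is expected to have order of magnitude $B \log(B)^6$ (since $\rho - 1 = 6$), and restricting one variable to a range of size $\log(B)^A$ should replace one factor of $\log(B)$ by a factor of $\log(\log(B))$.

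Fix $i$. The plan is to first fix the value of $\eta_i$, bound the resulting nine-variable count, and then sum over $\eta_i$. To eliminate $\eta_9$ and $\eta_{10}$, the torsor equation \eqref{torsor} determines the product $\eta_9\eta_{10} = -\eta_1\eta_6\eta_8 - \eta_3\eta_5\eta_7^2$ in terms of the remaining variables, and the number of admissible pairs $(\eta_9,\eta_{10})$ is then bounded by $O(\tau(|\eta_9\eta_{10}|))$. The average bound $\sum_{n \leq N}\tau(n) \ll N\log(N)$ handles this divisor function without any polynomial loss in $B$. The remaining summations over the $\eta_j$ with $j \in \{1,\dots,8\} \setminus \{i\}$ are controlled using the four height conditions \eqref{height1}--\eqref{height4}, each variable naturally ranging over a logarithmic interval. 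Arranging these summations and invoking the divisor bound, one obtains an inner estimate of the shape $\ll B\log(B)^5 / |\eta_i|$, so that
\begin{eqnarray*}
\mathcal{M}_i(B) & \ll_A & B \log(B)^5 \sum_{1 \leq |\eta_i| \leq \log(B)^A} \frac{1}{|\eta_i|} \\
& \ll_A & B \log(B)^5 \log(\log(B)).
\end{eqnarray*}

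The main obstacle is the precise bookkeeping of logarithmic factors: one must check that exactly five factors of $\log(B)$ emerge from the seven summations over the variables different from $\eta_i$ (the torsor equation and the divisor-sum estimate together absorbing the remaining degrees of freedom), and that the $B$ coming from the height conditions appears correctly, so that the total inner count is of the desired form $B\log(B)^5/|\eta_i|$. This is most delicate when $i \in \{1,3,5,6,7,8\}$, in which case $\eta_i$ appears in the torsor equation and its small value interacts nontrivially with the divisor sum over $\eta_9\eta_{10}$; one has to choose the order of summation so as to extract the $\log(\log(B))$ saving from $\eta_i$ without sacrificing any of the remaining log factors. For $i \in \{2,4\}$, the variable $\eta_i$ is absent from \eqref{torsor} and can be summed independently first, producing the $\log(\log(B))$ factor, after which the remaining nine-variable count is bounded by $B\log(B)^5$ using the same height-condition-driven reasoning.
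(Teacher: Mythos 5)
Your plan---fix $\eta_i$, eliminate $\eta_9,\eta_{10}$ through the torsor equation and a divisor bound, then sum over the remaining variables and finally over $\eta_i$---is genuinely different from the paper's route, and it has a real gap at the divisor step. After fixing $\eta_1,\dots,\eta_8$, the integer $n = -\eta_1\eta_6\eta_8 - \eta_3\eta_5\eta_7^2$ is a polynomial in those variables, not a free parameter running over an interval, so the classical estimate $\sum_{n \le N}\tau(n) \ll N\log N$ does not apply. What one actually faces is a correlation sum $\sum \tau(|\eta_1\eta_6\eta_8 + \eta_3\eta_5\eta_7^2|)$ over a box, i.e.\ a shifted $\tau_3\times\tau$-type convolution. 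Unfolding $\tau(|n|) = \sum_{\eta_{10}\mid n}1$ and reading $\eta_{10}\mid n$ as the congruence $\eta_1\eta_6\eta_8 \equiv -\eta_3\eta_5\eta_7^2 \pmod{\eta_{10}}$ shows that this is precisely the equidistribution problem handled by lemma \ref{lemma bound} via the Friedlander--Iwaniec theorem (lemma \ref{Fri-Iwa lemma}). Without that input, the error in counting triples $(\eta_1,\eta_6,\eta_8)$ in a fixed residue class modulo $\eta_{10}$ is of order $K^{2/3}$ per modulus ($K$ the box size), and summing over the $\ll K^{1/2}$ admissible moduli costs $K^{7/6}$, which already exceeds the main term $\asymp K$. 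So the naive divisor-counting step cannot even produce an upper bound of the correct order of magnitude, let alone the extra $\log\log$ saving---a point the paper flags explicitly both immediately after lemma \ref{lemma bound} (the $\min$ in the bound for $M$ is crucial) and at the end of section \ref{equidistribution section} (even a naive Deligne-type bound would not suffice).

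The paper's actual proof is dyadic: it applies lemma \ref{lemma bound} to each dyadic box to get
$\mathcal{N} \ll Y_2Y_3Y_4Y_5Y_7\bigl(\min(Y_1Y_6Y_8,\, Y_9Y_{10}) + K^{1-\vartheta+\varepsilon}(Y_9Y_{10})^{\vartheta - 1/1200}\bigr)$,
beats the second term by the FI-powered power saving in $K$, and beats the first by $\min(Y_1Y_6Y_8,Y_9Y_{10}) \le (Y_1Y_6Y_8)^{3/4}(Y_9Y_{10})^{1/4}$ followed by summation against the height conditions \eqref{heighta}--\eqref{heightd}, which leaves exactly six free dyadic parameters; the $\log\log(B)$ then comes from the one restricted to $\le\log(B)^A$. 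Your divisor-function shortcut cannot reproduce this, and the paragraph you flag as ``delicate bookkeeping'' is in fact the place where the argument would need the same Friedlander--Iwaniec input in one form or another.
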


We first need to prove the following result.

\begin{lemma}
\label{lemma bound}
Let $K_1, K_3, K_5, K_6 \ldots, K_{10} \geq 1/2$ and let $K$ be a quantity such that
$K \geq \max \left(K_1 K_6 K_8, K_3 K_5 K_7^2 \right)$. Define $M = M(K_1, K_3, K_5, K_6, \ldots, K_{10})$ be the number of
$(m_1, m_3, m_5, m_6, \ldots, m_{10}) \in \mathbb{Z}^8$ such that $K_i < |m_i| \leq 2K_i$ for $i = 1, 3$ and $5 \leq i \leq 10$, $\gcd(m_1m_6m_8,m_9m_{10}) = 1$ and finally
\begin{equation}
\label{equation lemma}
m_1m_6m_8 + m_3m_5m_7^2 + m_9m_{10} = 0 \textrm{.} 
\end{equation}
For any fixed $\varepsilon > 0$, we have the estimate
\begin{eqnarray*}
M & \ll & K_3 K_5 K_7 \left( \min(K_1 K_6 K_8, K_9 K_{10}) +
K^{1 - \vartheta + \varepsilon} (K_9 K_{10})^{\vartheta - 1/1200} \right) \textrm{,}
\end{eqnarray*}
where $\vartheta$ is defined in lemma \ref{pre lemma tau}.
\end{lemma}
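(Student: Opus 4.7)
The plan is as follows. First, fix the triple $(m_3, m_5, m_7)$ in its dyadic box, at the cost of a factor $K_3 K_5 K_7$, and set $a = -m_3 m_5 m_7^2$; we then have to count quintuples $(m_1, m_6, m_8, m_9, m_{10})$ satisfying the equation $m_1 m_6 m_8 + m_9 m_{10} = a$, the coprimality condition $\gcd(m_1 m_6 m_8, m_9 m_{10}) = 1$, and the prescribed dyadic sizes. The coprimality and the equation together force $\gcd(a, m_9) = 1$, which is precisely what is needed in order to bring in lemma~\ref{Fri-Iwa lemma}. Since the announced bound is symmetric under the interchange $K_9 \leftrightarrow K_{10}$, we may assume throughout that $K_9 \leq K_{10}$.

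Next, for each fixed $m_9$ in its dyadic range, we interpret the equation as the congruence $m_1 m_6 m_8 \equiv a \imod{|m_9|}$ together with the supplementary condition $|m_1 m_6 m_8 + m_3 m_5 m_7^2| \asymp K_9 K_{10}$, which expresses that $m_{10} = (a - m_1 m_6 m_8)/m_9$ indeed has $|m_{10}| \asymp K_{10}$. After splitting into the finitely many sign patterns for $(m_1, m_6, m_8)$, this realizes the set of admissible $(m_1, m_6, m_8)$ as a region $\mathcal{S}$ of the type considered in lemma~\ref{pre lemma tau}, with $T = m_3 m_5 m_7^2$ (whose absolute value is $\leq K$ since $K \geq K_3 K_5 K_7^2$), $X$ of order $K \geq K_1 K_6 K_8$, $Z \asymp K_9 K_{10}$, $L_1 = K_6$, $L_2 = K_8$, and suitable $X_1, X_2$. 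An explicit volume computation shows $\vol(\mathcal{S}) \asymp \min(K_1 K_6 K_8, K_9 K_{10})$, so that the main term $D^{\ast}(\mathcal{S}; |m_9|) \asymp \vol(\mathcal{S})/\varphi(|m_9|)$, when summed over $m_9 \in (K_9, 2K_9]$, recovers the first term $\min(K_1 K_6 K_8, K_9 K_{10})$ of the stated bound.

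Provided $K_9 \leq X^{1/2 + 1/230}$, lemma~\ref{pre lemma tau} applies, and its Friedlander--Iwaniec-type error per fixed $m_9$ is $\ll X^{1/2 + \varepsilon} q^{-1/600} (X/q^2)^{1/2 - \vartheta}$; summed over $m_9 \sim K_9$ this amounts to $K_9^{23/120} X^{1 - \vartheta + \varepsilon}$. Using $X \leq K$, together with our assumption $K_9 \leq K_{10}$ (which gives $K_9^{230/1200} \leq (K_9 K_{10})^{115/1200} = (K_9 K_{10})^{\vartheta - 1/1200}$), this becomes exactly $K^{1 - \vartheta + \varepsilon} (K_9 K_{10})^{\vartheta - 1/1200}$, matching the second term of the stated bound. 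The auxiliary boundary contribution $X \log(X)/\varphi(q) \cdot (1/L_1 + 1/L_2)$ from lemma~\ref{pre lemma tau} is to be absorbed into one or the other of these two pieces after summation over $m_9$.

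The main difficulties to anticipate are: (i) choosing the parameters $X_1, X_2, X, Z, L_1, L_2$ defining $\mathcal{S}$ so that $\vol(\mathcal{S})$ is genuinely of order $\min(K_1 K_6 K_8, K_9 K_{10})$ rather than merely $K_1 K_6 K_8$, i.e., so that the supplementary constraint $|m_1 m_6 m_8 + T| \asymp K_9 K_{10}$ is fully exploited; and (ii) dealing with the complementary range $K_9 > X^{1/2 + 1/230}$, which is outside the scope of Friedlander--Iwaniec and has to be handled by fixing four of the remaining five variables and appealing to the trivial divisor bound ($\tau$ or $\tau_3$) for the last one, which in that regime yields an estimate already subsumed by the first term $\min(K_1 K_6 K_8, K_9 K_{10})$ of the bound.
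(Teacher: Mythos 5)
Your overall strategy is the paper's: assume WLOG that the modulus is the smaller of the two (you take $K_9 \leq K_{10}$ and modulus $|m_9|$; the paper takes $|m_9| \geq |m_{10}|$ and modulus $m_{10}$ — same thing), exploit the congruence $m_1 m_6 m_8 \equiv a \imod q$ via Friedlander--Iwaniec, and the exponent bookkeeping ($K_9^{2\vartheta - 1/600} \leq (K_9 K_{10})^{\vartheta - 1/1200}$ under $K_9 \leq K_{10}$) checks out. Your point (ii) is actually moot: since $q \asymp K_9$ is the smaller of the two and $K_9 K_{10} \ll K$, one has $q \ll K^{1/2}$ automatically, so there is no complementary range to worry about.

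However, there is a genuine gap, and you half-see it in your point (i). The region that must be fed to the Friedlander--Iwaniec machinery here is a dyadic box $\prod_{i \in \{1,6,8\}}\{K_i < |x_i| \leq 2K_i\}$ intersected with the two-sided annulus $qK_9 < |x_1 x_6 x_8 - a| \leq 2qK_9$. This region is \emph{not} of the shape $\mathcal{S}(X,X_1,X_2,T,Z,L_1,L_2)$ in lemma \ref{pre lemma tau}: the conditions \eqref{A}--\eqref{F} contain only one-sided bounds on $y$, $|z|$ and on $|xyz+T|$, and no choice of $X_1, X_2, X, Z, L_1, L_2$ reproduces both the upper dyadic cutoffs and the upper annulus cutoff. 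More seriously, even if one tried to enlarge to a region of the lemma's shape, the resulting boundary error $X\log(X)\,\varphi(q)^{-1}(L_1^{-1}+L_2^{-1})$ with $L_1 = K_6$, $L_2 = K_8$ is \emph{not} absorbable: summing over $q \asymp K_9$ gives a contribution $\asymp K_1(K_6 + K_8)(\log K)^2$, which can exceed both $\min(K_1K_6K_8, K_9K_{10})$ and $K^{1-\vartheta+\varepsilon}(K_9K_{10})^{\vartheta-1/1200}$ when $K_6$ or $K_8$ is $O(1)$ and $K_9 K_{10}$ is small (the lemma only assumes $K_i \geq 1/2$). The paper therefore does \emph{not} invoke lemma \ref{pre lemma tau}; it defines the annulus region $\mathcal{S}$ directly (equation \eqref{definition S}), redoes the dyadic decomposition with lemma \ref{Fri-Iwa lemma}, and — crucially — chooses $\zeta$ so that $2$ is an integer power of $\zeta$. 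With that alignment the dyadic box boundaries coincide with $\zeta$-adic boundaries and contribute nothing, so the only boundary error comes from the annulus itself and is $\ll \delta K/\varphi(q)$, with no $\log$ factor and no $1/L_i$. That is the missing ingredient in your write-up: a fresh boundary analysis for the annulus-box region, rather than a transplant of the region and error term from lemma \ref{pre lemma tau}.
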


It is instructive to compare lemma \ref{lemma bound} with \cite[Lemmas $3$, $4$]{MR2075628} which have been obtained by Heath-Brown in order to deal with the case of Cayley's cubic surface. In particular, it is worth pointing out that in the application of lemma \ref{lemma bound} to prove lemma \ref{lemmalog}, the presence of the $\min$ in the right-hand side of the bound for $M$ is crucial.

\begin{proof}
Note that the equation \eqref{equation lemma} implies $K_9 K_{10} \ll K$. We assume by symmetry that $|m_9| \geq |m_{10}|$ and without loss of generality that $m_{10} > 0$ and we start by counting the number of $m_9$, $m_1$, $m_6$ and $m_8$. The idea is to see the equation \eqref{equation lemma} as a congruence modulo $m_{10}$. Set $a = - m_3m_5m_7^2$ and $q = m_{10}$. Let
\begin{eqnarray}
\label{definition S}
\mathcal{S} & = & \left\{ (x_1, x_6, x_8) \in \mathbb{R}^3,
\begin{array}{l}
K_i < |x_i| \leq 2 K_i, i \in \{1, 6, 8 \} \\
q K_9 < |x_1x_6x_8 - a| \leq 2 q K_9
\end{array}
\right\} \textrm{.}
\end{eqnarray}
The number of $m_9$, $m_1$, $m_6$ and $m_8$ we want to estimate is less than the quantity $D(\mathcal{S};q,a)$ defined by
\begin{eqnarray*}
D(\mathcal{S};q,a) & = & \# \left\{ (m_1,m_6,m_8) \in \mathcal{S} \cap \mathbb{Z}^3, m_1m_6m_8 \equiv a \imod{q} \right\} \textrm{.}
\end{eqnarray*}
We now proceed to investigate this quantity exactly as in the proof of lemma \ref{pre lemma tau}. Let $\delta$ be a parameter such that $K^{-1/3} \leq \delta \leq 1$ and $\zeta = 1 + \delta$. Let also $U_1$, $U_6$ and $U_8$ be variables running over the set $\{ \pm \zeta^{n}, n \in \mathbb{Z}_{\geq 0} \}$ and $\mathcal{I}_i = ]U_i K_i,\zeta U_i K_i]$ if $U_i > 0$ and $\mathcal{I}_i = [\zeta U_i K_i, U_i K_i[$ if $U_i < 0$ for $i \in \{1, 6, 8 \}$. Recall the definitions \eqref{N} and \eqref{Nast} of $N(\mathcal{I}, \mathcal{J}, \mathcal{K}; q, a)$ and $N^{\ast}(\mathcal{I}, \mathcal{J}, \mathcal{K}; q)$. We have
\begin{eqnarray*}
D(\mathcal{S};q,a) - \sum_{\mathcal{I}_1 \times \mathcal{I}_6 \times \mathcal{I}_8 \cap \mathbb{Z}^3 \subset \mathcal{S}} \! \! \! \!
N(\mathcal{I}_1, \mathcal{I}_6, \mathcal{I}_8; q, a) & \ll &
\sum_{\substack{\mathcal{I}_1 \times \mathcal{I}_6 \times \mathcal{I}_8 \cap \mathbb{Z}^3 \nsubseteq \mathcal{S} \\ \mathcal{I}_1 \times \mathcal{I}_6 \times \mathcal{I}_8 \cap \mathbb{Z}^3 \nsubseteq \mathbb{R}^3 \setminus \mathcal{S}}} \! \! \! \!
N(\mathcal{I}_1,\mathcal{I}_6,\mathcal{I}_8;q,a) \textrm{.}
\end{eqnarray*}
We define the quantity
\begin{eqnarray*}
D(\mathcal{S};q) & = & \sum_{\mathcal{I}_1 \times \mathcal{I}_6 \times \mathcal{I}_8 \cap \mathbb{Z}^3 \subset \mathcal{S}}
N^{\ast}(\mathcal{I}_1,\mathcal{I}_6,\mathcal{I}_8;q) \textrm{.}
\end{eqnarray*}
We note that since $N^{\ast}(\mathcal{I}_1,\mathcal{I}_6,\mathcal{I}_8;q)$ is independent of $a$, so is $D(\mathcal{S};q)$. Recall the definition \eqref{definition E} of $E(K,q)$. Using lemma \ref{Fri-Iwa lemma} exactly as in the proof of lemma \ref{pre lemma tau}, we get
\begin{eqnarray*}
\sum_{\mathcal{I}_1 \times \mathcal{I}_6 \times \mathcal{I}_8 \cap \mathbb{Z}^3 \subset \mathcal{S}}
N(\mathcal{I}_1,\mathcal{I}_6,\mathcal{I}_8;q,a) - D(\mathcal{S};q) & \ll & \frac{K^{\varepsilon} E(K,q)}{\delta^3} \textrm{.}
\end{eqnarray*}
We have therefore proved that
\begin{eqnarray*}
D(\mathcal{S};q,a) - D(\mathcal{S};q) & \ll &
\sum_{\substack{\mathcal{I}_1 \times \mathcal{I}_6 \times \mathcal{I}_8 \cap \mathbb{Z}^3 \nsubseteq \mathcal{S} \\
\mathcal{I}_1 \times \mathcal{I}_6 \times \mathcal{I}_8 \cap \mathbb{Z}^3 \nsubseteq \mathbb{R}^3 \setminus \mathcal{S}}}
N(\mathcal{I}_1,\mathcal{I}_6,\mathcal{I}_8;q,a) + \frac{K^{\varepsilon} E(K,q)}{\delta^3} \textrm{.}
\end{eqnarray*}
Using the bound \eqref{bound} for
$N(\mathcal{I}_1,\mathcal{I}_6,\mathcal{I}_8;q,a)$ as in the proof of lemma
\ref{pre lemma tau}, we obtain
\begin{eqnarray*}
D(\mathcal{S};q,a) - D(\mathcal{S};q) & \ll & \frac1{\varphi(q)} \! \!
\sum_{\substack{\mathcal{I}_1 \times \mathcal{I}_6 \times \mathcal{I}_8 \cap \mathbb{Z}^3 \nsubseteq \mathcal{S} \\ \mathcal{I}_1 \times \mathcal{I}_6 \times \mathcal{I}_8 \cap \mathbb{Z}^3 \nsubseteq \mathbb{R}^3 \setminus \mathcal{S}}}
\! \! \! \! \# \left( \mathcal{I}_1 \times \mathcal{I}_6 \times \mathcal{I}_8 \cap \mathbb{Z}^3 \right)
+ \frac{K^{\varepsilon} E(K,q)}{\delta^3} \textrm{.}
\end{eqnarray*}
The hyperrectangles $\mathcal{I}_1 \times \mathcal{I}_6 \times \mathcal{I}_8$ subject to the conditions
$\mathcal{I}_1 \times \mathcal{I}_6 \times \mathcal{I}_8 \cap \mathbb{Z}^3 \nsubseteq \mathcal{S}$ and 
$\mathcal{I}_1 \times \mathcal{I}_6 \times \mathcal{I}_8 \cap \mathbb{Z}^3 \nsubseteq \mathbb{R}^3 \setminus \mathcal{S}$ are those for which $(\zeta^{s_1} U_1 K_1, \zeta^{s_6} U_6 K_6, \zeta^{s_8} U_8 K_8) \in \mathcal{S} \cap \mathbb{Z}^3$ and
$(\zeta^{t_1} U_1 K_1, \zeta^{t_6} U_6 K_6, \zeta^{t_8} U_8 K_8) \in \mathbb{Z}^3 \setminus \mathcal{S}$ for some triples
$(s_1,s_6,s_8) \in ]0,1]^3$ and $(t_1,t_6,t_8) \in ]0,1]^3$. Since we can assume without loss of generality that $2$ is an integer power of $\zeta$, for these hyperrectangles, we have either the two conditions
\begin{eqnarray}
\label{condition U_iK_i'}
q K_9 & < & \left| \zeta^{s_1 + s_6 + s_8} U_1 U_6 U_8 K_1 K_6 K_8 - a \right| \textrm{,} \\
\label{condition U_iK_i}
q K_9 & \geq & \left| \zeta^{t_1 + t_6 + t_8} U_1 U_6 U_8 K_1 K_6 K_8 - a \right| \textrm{,}
\end{eqnarray}
or the two conditions
\begin{eqnarray*}
\left| \zeta^{s_1 + s_6 + s_8} U_1 U_6 U_8 K_1 K_6 K_8 - a \right| & \leq & 2 q K_9 \textrm{,} \\
\left| \zeta^{t_1 + t_6 + t_8} U_1 U_6 U_8 K_1 K_6 K_8 - a \right| & > & 2 q K_9 \textrm{.}
\end{eqnarray*}
The treatments of these two cases are identical so we only deal with the first. We see that the conditions \eqref{condition U_iK_i'} and \eqref{condition U_iK_i} imply
\begin{eqnarray}
\label{condition U_i}
& & \zeta^{-3} q K_9 - \left( 1 - \zeta^{-3} \right) |a| < \left| U_1 U_6 U_8 K_1 K_6 K_8 - a \right| \leq
q K_9 + \left( 1 - \zeta^{-3} \right) |a| \textrm{.}
\end{eqnarray}
Going back to the variables $m_1$, $m_6$ and $m_8$, we easily see that
\begin{eqnarray*}
\big| \left| U_1 U_6 U_8 K_1 K_6 K_8 - a \right| - \left| m_1 m_6 m_8 - a \right| \big| & \leq &
\left| m_1 m_6 m_8 - U_1 U_6 U_8 K_1 K_6 K_8 \right| \\
& \leq & 7 \delta U_1 U_6 U_8 K_1 K_6 K_8 \\
& \leq & 7 \delta (q K_9 + |a|) \textrm{,}
\end{eqnarray*}
using the inequality \eqref{condition U_iK_i}. Since $1 - \zeta^{-3} \leq 3 \delta$, we therefore get
\begin{eqnarray}
\label{condition m_i}
& & \left( \zeta^{-3} - 7 \delta \right) q K_9 - 10 \delta |a| < \left| m_1 m_6 m_8 - a \right| \leq
\left( 1 + 7 \delta \right) q K_9 + 10 \delta |a| \textrm{.}
\end{eqnarray}
We can assume by symmetry that $K_1 \geq K_6,K_8$ and thus $K_6 K_8 \ll K^{2/3}$. Summing first over $m_1$, we see that the error we want to bound is 
\begin{eqnarray*}
\sum_{\eqref{condition U_i}} \# \left( \mathcal{I}_1 \times \mathcal{I}_6 \times \mathcal{I}_8 \cap \mathbb{Z}^3 \right) & \ll &
\# \left\{ (m_1,m_6,m_8) \in \mathbb{Z}^3,
\begin{array}{l}
\eqref{condition m_i} \\
K_6 < |m_6| \leq 2 K_6 \\
K_8 < |m_8| \leq 2 K_8
\end{array}
\right\} \\
& \ll & \sum_{\substack{K_6 < |m_6| \leq 2 K_6 \\ K_8 < |m_8| \leq 2 K_8}}
\left( \frac{\delta q K_9}{|m_6 m_8|} + \frac{\delta |a|}{|m_6 m_8|} + 1 \right) \\
& \ll & \delta q K_9 + \delta |a| + K_6 K_8 \\
& \ll & \delta K \textrm{,}
\end{eqnarray*}
since $q K_9 \ll K$, $|a| \ll K$ and $K_6 K_8 \ll K^{2/3}$, $K^{-1/3} \leq \delta$. We therefore obtain
\begin{eqnarray*}
D(\mathcal{S};q,a) - D(\mathcal{S};q) & \ll & \frac{\delta K}{\varphi(q)} + \frac{K^{\varepsilon} E(K,q)}{\delta^3} \\
& \ll & K^{\varepsilon} \left( \frac{\delta K}{q} + \frac{E(K,q)}{\delta^3} \right) \textrm{.}
\end{eqnarray*}
The choice of some $\delta \leq 1$ such that $2$ is an integer power of $\zeta$ and satisfying
\begin{eqnarray*}
\delta & \asymp & \left( \frac{E(K,q) q}{K} \right)^{1/4} \textrm{,}
\end{eqnarray*}
is allowed since $q \ll K^{1/2}$ and yields
\begin{eqnarray}
\label{estimate D}
D(\mathcal{S};q,a) - D(\mathcal{S};q) & \ll & \frac{K^{1 - \vartheta + \varepsilon}}{q^{1 - 2 \vartheta + 1/600}} \textrm{.}
\end{eqnarray}
Averaging over $a$ coprime to $q$, we see that we can replace $D(\mathcal{S};q)$ in this estimate by
\begin{eqnarray*}
D^{\ast}(\mathcal{S};q) & = & \frac1{\varphi(q)} \# \left\{ (m_1,m_6,m_8) \in \mathcal{S} \cap \mathbb{Z}^3, \gcd(m_1m_6m_8, q) = 1 \right\} \textrm{.}
\end{eqnarray*}
Recalling the definition \eqref{definition S} of $\mathcal{S}$, we instantly get the bound
\begin{eqnarray*}
D^{\ast}(\mathcal{S};q) & \ll & \frac1{\varphi(q)} K_1 K_6 K_8 \textrm{.}
\end{eqnarray*}
Furthermore, assuming by symmetry that $K_1 \geq K_6, K_8$ and summing first over $m_1$ using the condition
$q K_9 < |m_1m_6m_8 - a| \leq 2 q K_9$, we also get
\begin{eqnarray*}
D^{\ast}(\mathcal{S};q) & \ll & \frac1{\varphi(q)} \left( q K_9 + K_6 K_8 \right) \\
& \ll & \frac1{\varphi(q)} \left( q K_9 + K^{2/3} \right) \textrm{.}
\end{eqnarray*}
Finally, we deduce
\begin{eqnarray*}
D^{\ast}(\mathcal{S};q) & \ll & \frac1{\varphi(q)} \min \left( K_1 K_6 K_8, q K_9 \right) + \frac{K^{2/3}}{\varphi(q)} \textrm{,}
\end{eqnarray*}
Putting together this bound and the estimate \eqref{estimate D}, we see that
\begin{eqnarray*}
D(\mathcal{S};q,a) & \ll & \frac1{\varphi(q)} \min \left( K_1 K_6 K_8, q K_9 \right) +
\frac{K^{1 - \vartheta + \varepsilon}}{q^{1 - 2 \vartheta + 1/600}} \textrm{.}
\end{eqnarray*}
Recalling that $q = m_{10}$ and summing over $m_3$, $m_5$, $m_7$ and $m_{10}$, we get
\begin{eqnarray*}
M & \ll & K_3 K_5 K_7 \left( \min(K_1 K_6 K_8, K_9 K_{10}) + K^{1 - \vartheta +\varepsilon} K_{10}^{2 \vartheta - 1/600} \right) \textrm{,}
\end{eqnarray*}
which completes the proof since $K_{10} \ll K_9$.
\end{proof}

We are now ready to prove lemma \ref{lemmalog}.

\begin{proof}
Let $Y_i \geq 1/2$ for $i = 1, \dots, 10 $ and $\mathcal{N} = \mathcal{N}(Y_1, \dots, Y_{10})$ be the contribution coming from the $(\eta_1, \dots, \eta_{10}) \in \mathcal{T}(B)$ subject to the conditions $Y_i < |\eta_i| \leq 2 Y_i$ for $i = 1, \dots, 10$. The height conditions imply that the following inequalities hold (otherwise $\mathcal{N} = 0$ and there is nothing to prove),
\begin{eqnarray}
\label{heighta}
Y_1 Y_4^2 Y_5 Y_6^2 Y_9 & \leq & B \textrm{,} \\
\label{heightb}
Y_2 Y_3^2 Y_4 Y_5^2 Y_7^3 & \leq & B \textrm{,} \\
\label{heightc}
Y_1 Y_2^2 Y_3 Y_8^2 Y_{10} & \leq & B \textrm{,} \\
\label{heightd}
Y_1 Y_2 Y_3 Y_4 Y_5 Y_6 Y_7 Y_8 & \leq & B \textrm{.}
\end{eqnarray}
Define
\begin{eqnarray*}
K & = & \frac{B}{Y_2 Y_3 Y_4 Y_5 Y_7} \textrm{.}
\end{eqnarray*}
The conditions \eqref{heightb} and \eqref{heightd} can be rewritten as $Y_3 Y_5 Y_7^2 \leq K$ and $Y_1 Y_6 Y_8 \leq K$. We can therefore apply lemma \ref{lemma bound}. By summing over $\eta_2$ and $\eta_4$, we get
\begin{eqnarray*}
\mathcal{N} & \ll & Y_2 Y_3 Y_4 Y_5 Y_7 \left( \min(Y_1 Y_6 Y_8, Y_9 Y_{10}) +
K^{1 - \vartheta + \varepsilon} (Y_9 Y_{10})^{\vartheta - 1/1200} \right) \textrm{.}
\end{eqnarray*}
Let respectively $\mathcal{N}'$ and $\mathcal{N}''$ denote the first and the second terms of the right-hand side. In the following estimations, the notation $\sum_{\widehat{Y}}$ indicates that the summation is over all the $Y_i \neq Y$. Let us first estimate the contribution of $\mathcal{N}''$. For this, we first sum over $Y_{10}$ using the condition $Y_9 Y_{10} \ll K$, and we choose $\varepsilon = 1/2400$, we get
\begin{eqnarray*}
\sum_{Y_i} \mathcal{N}'' & \ll & \sum_{\widehat{Y_{10}}} Y_2 Y_3 Y_4 Y_5 Y_7 K^{1 - 1/2400} \\
& = & B^{1-1/1200} \sum_{\widehat{Y_{10}}} (Y_2 Y_3 Y_4 Y_5 Y_7)^{1/2400} \\
& \ll & B \sum_{\widehat{Y_2}, \widehat{Y_{10}}} Y_3^{-1/2400} Y_5^{-1/2400} Y_7^{-1/1200} \\
& \ll & B  \log(B)^5 \textrm{,}
\end{eqnarray*}
where we have summed over $Y_2$ using the condition \eqref{heightb}. Let us now turn to the estimation of the contribution of $\mathcal{N}'$. We notice that we have
\begin{eqnarray*}
\mathcal{N}' & \leq & Y_2 Y_3 Y_4 Y_5 Y_7 (Y_1 Y_6 Y_8)^{3/4} (Y_9 Y_{10})^{1/4} \textrm{.}
\end{eqnarray*}
It turns out that the investigation of the contribution of this quantity is sufficient for our purpose. We successively sum over $Y_9$, $Y_{10}$, $Y_8$ and $Y_7$ using respectively the condtions \eqref{heighta}, \eqref{heightc}, \eqref{heightd} and \eqref{heightb}. We obtain
\begin{eqnarray*}
\sum_{Y_i} \mathcal{N}' & \ll & B^{1/2} \sum_{\widehat{Y_9}, \widehat{Y_{10}}}
Y_2^{1/2} Y_3^{3/4} Y_4^{1/2} Y_5^{3/4} Y_7 Y_1^{1/4} Y_6^{1/4} Y_8^{1/4} \\
& \ll & B^{3/4} \sum_{\widehat{Y_8}, \widehat{Y_9}, \widehat{Y_{10}}} Y_2^{1/4} Y_3^{1/2} Y_4^{1/4} Y_5^{1/2} Y_7^{3/4} \\
& \ll & B \sum_{\widehat{Y_7}, \widehat{Y_8}, \widehat{Y_9}, \widehat{Y_{10}}} 1 \textrm{.}
\end{eqnarray*}
This proves that if one of the variables among $\eta_1, \dots, \eta_6$, say $\eta_i$, is subject to the condition
$|\eta_i| \leq \log(B)^A$ for some fixed constant $A > 0$ then the overall contribution of $\mathcal{N}'$ is
$\ll_A B \log(B)^5 \log(\log(B))$. Furthermore, we could have summed over $\eta_1$ or $\eta_6$ instead of $\eta_8$ and over $\eta_3$ or $\eta_5$ instead of $\eta_7$ so the conclusion is also valid for $\eta_7$ and $\eta_8$, which completes the proof of lemma \ref{lemma bound}.
\end{proof}

\subsection{Setting up}

To be able to apply lemma \ref{lemma tau}, we need to make sure that in our setting the condition
$q \leq X^{1/2 + 1/230}$ holds. For this, we make the following assumption
\begin{eqnarray*}
|\eta_{10}| & \leq & |\eta_9| \textrm{.}
\end{eqnarray*}
Thanks to the equation \eqref{torsor} and the height conditions \eqref{height2} and \eqref{height4}, we get the new condition
\begin{eqnarray}
\label{new}
\eta_{10}^2 & \leq & 2 \frac{B}{\eta_2\eta_3\eta_4\eta_5\eta_7} \textrm{.}
\end{eqnarray}
The symmetry between $\eta_9$ and $\eta_{10}$ revealed by $(\eta_4,\eta_5,\eta_6,\eta_9) \mapsto (\eta_2,\eta_3,\eta_8,\eta_{10})$ and the following lemma prove that we simply need to multiply our future main term by $2$ to take into account this new assumption.

\begin{lemma}
\label{equal}
Let $N_0(B)$ be the total number of $(\eta_1, \dots, \eta_{10}) \in \mathcal{T}(B)$ such that $|\eta_9| = |\eta_{10}|$. We have the upper bound
\begin{eqnarray*}
N_0(B) & \ll & B \log(B)^4 \textrm{.}
\end{eqnarray*}
\end{lemma}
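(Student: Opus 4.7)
My plan is to exploit the collapse of the torsor equation under the constraint $|\eta_9| = |\eta_{10}|$. Writing $\eta_9 = \epsilon \eta_{10}$ with $\epsilon \in \{-1, +1\}$ and substituting into \eqref{torsor}, the equation becomes
\begin{eqnarray*}
\eta_1 \eta_6 \eta_8 & = & -\eta_3 \eta_5 \eta_7^2 - \epsilon \eta_{10}^2 \;=:\; P \textrm{,}
\end{eqnarray*}
so that $P$ is uniquely determined by $(\eta_3, \eta_5, \eta_7, \eta_{10}, \epsilon)$. Since $\eta_1, \eta_6$ are positive integers and the sign of $\eta_8$ is forced by that of $P$, the number of compatible triples $(\eta_1, \eta_6, \eta_8)$ is at most $d_3(|P|)$, where $d_3$ denotes the usual ternary divisor function.

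Next I would bound the contribution of $(\eta_2, \eta_4)$. Setting $Q = \eta_3 \eta_5 \eta_7^2$ and using $\eta_1 \eta_6 |\eta_8| = |P|$, the combination of \eqref{height2} and \eqref{height4} reduces to the single inequality
\begin{eqnarray*}
\eta_2 \eta_4 & \leq & \frac{B}{\eta_3 \eta_5 \eta_7 \max(Q, |P|)} \textrm{.}
\end{eqnarray*}
Since the number of pairs of positive integers with product at most $M$ is $\ll M \log M$, this yields
\begin{eqnarray*}
N_0(B) & \ll & B \log B \sum_{\eta_3, \eta_5, \eta_7 \geq 1} \frac{1}{\eta_3 \eta_5 \eta_7} \sum_{\eta_{10}, \epsilon} \frac{d_3(|P|)}{\max(Q, |P|)} \textrm{.}
\end{eqnarray*}

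The heart of the argument is the inner sum for fixed $(\eta_3, \eta_5, \eta_7)$. Invoking $d_3(n) \ll_\varepsilon n^\varepsilon$, I would split according to the location of $|P|$ relative to $Q$: the regime $|P| < Q$ only arises when $\epsilon = -1$ and $\eta_{10} \leq \sqrt{2Q}$, contributing $\ll \sqrt{Q} \cdot Q^{\varepsilon - 1} = O(Q^{\varepsilon - 1/2})$. In the regime $|P| \geq Q$ I would further subdivide at $\eta_{10} \asymp \sqrt{Q}$: the balanced range again contributes $O(Q^{\varepsilon - 1/2})$, while the tail $\eta_{10} > \sqrt{Q}$ is handled by the convergent geometric-type bound $\sum \eta_{10}^{2\varepsilon - 2} \ll Q^{\varepsilon - 1/2}$, valid for $\varepsilon < 1/2$. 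Summing $Q^{\varepsilon - 1/2}/(\eta_3 \eta_5 \eta_7) = \eta_3^{-3/2 + \varepsilon} \eta_5^{-3/2 + \varepsilon} \eta_7^{-2 + 2\varepsilon}$ over $(\eta_3, \eta_5, \eta_7) \in \mathbb{Z}_{>0}^3$ yields a convergent triple sum, so in fact $N_0(B) \ll B \log B$, which is strictly stronger than the claimed bound $O(B \log(B)^4)$.

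The main technical delicacy I anticipate lies in the shifted-divisor behaviour of $d_3$ near the cancellation region $\eta_{10}^2 \approx Q$, where $|P|$ may be much smaller than either $Q$ or $\eta_{10}^2$. The pointwise bound $d_3(n) \ll_\varepsilon n^\varepsilon$ conveniently sidesteps any need to invoke deeper averaged estimates for $d_3$ evaluated at polynomial values; a quantitatively cruder variant using only the classical estimates $\sum_{n \leq N} d_3(n)/n \ll (\log N)^3$ and $\sum_{n \leq N} d(n) \ll N \log N$ would still comfortably produce a bound of the form $O(B (\log B)^{O(1)})$, which is the claimed order of magnitude.
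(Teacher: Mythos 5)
Your proof is correct and takes a genuinely different route from the paper's. The paper fixes $\eta_2,\dots,\eta_8$ and exploits the quadratic structure of the constraint: from the torsor equation, $\eta_{10}^2\equiv\pm\eta_3\eta_5\eta_7^2\pmod{\eta_6\eta_8}$, so it counts $\eta_{10}$ as a square root in $O\bigl(2^{\omega(\eta_6\eta_8)}\bigr)$ residue classes modulo $\eta_6\eta_8$, after splitting into three cases according to which of $\eta_1,\eta_6,\eta_8$ is largest in absolute value (to ensure $\eta_6|\eta_8|\gg(\eta_1\eta_6|\eta_8|)^{2/3}$); the remaining sums give $N_0(B)\ll B\log(B)^4$. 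You instead fix $(\eta_3,\eta_5,\eta_7,\eta_{10},\epsilon)$, note that the product $\eta_1\eta_6\eta_8$ is then rigidly equal to $P$, and bound the number of triples by $d_3(|P|)$. This is a dual reading of the same rigidity: the paper spends the constraint $\eta_{10}^2=\bigl|\eta_3\eta_5\eta_7^2+\eta_1\eta_6\eta_8\bigr|$ on the $\eta_{10}$-side (square-root counting), while you spend it on the multiplicative $\eta_1\eta_6\eta_8$-side (divisor counting). Your version is cleaner — no case split — and gives the stronger bound $N_0(B)\ll B\log B$, though of course $B\log(B)^4$ is all that is needed here and elsewhere in the paper.

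Two small points to tighten. First, the pair count should be stated as $\#\{(\eta_2,\eta_4)\in\mathbb{Z}_{>0}^2:\eta_2\eta_4\leq M\}\ll M\log(2+M)$ rather than $\ll M\log M$, so that it remains a valid upper bound down to $M=1$ (when $M<1$ the count is $0$, so no harm); since $M\leq B$ this is still $\ll M\log B$ uniformly, and your display is unaffected. Second, you should record that $P\neq0$ for every tuple of $\mathcal{T}(B)$ under consideration, since $\eta_1\eta_6>0$ and $\eta_8\neq0$; equivalently, the coprimality of the three monomials $\eta_1\eta_6\eta_8$, $\eta_3\eta_5\eta_7^2$, $\eta_9\eta_{10}$ forces $\eta_{10}^2\neq\eta_3\eta_5\eta_7^2$ except in the excluded degenerate case. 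This ensures $d_3(|P|)$ is well-defined and the bound $d_3(|P|)\ll_\varepsilon|P|^\varepsilon$ applies, which is what your treatment of the near-cancellation regime $\eta_{10}^2\approx Q$ tacitly uses.
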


\begin{proof}
We split the proof in three cases depending on which variable among $\eta_1$, $\eta_6$ and $\eta_8$ has greater absolute value (recall that $(\eta_1, \eta_ 6) \in \mathbb{Z}_{>0}^2$). We only treat the case where $\eta_1 \geq \eta_6, |\eta_8|$ since the two others are strictly identical. Note that the condition \eqref{new} is at our disposal here too. Let $N_0'$ be the number of $\eta_1$ and $\eta_{10}$ to be counted. We have
\begin{eqnarray*}
N_0' & \ll & \# \left\{ (\eta_1, \eta_{10}) \in \mathbb{Z}_{>0} \times \mathbb{Z}_{\neq 0}, 
\begin{array}{l}
\eta_{10}^2 = \pm \eta_3 \eta_5 \eta_7^2 \pm \eta_1 \eta_6 \eta_8 \\
\eqref{new}
\end{array}
 \right\}  \\
& \ll & \# \left\{ \eta_{10} \in \mathbb{Z}_{\neq 0}, 
\begin{array}{l}
\eta_{10}^2 \equiv \pm \eta_3 \eta_5 \eta_7^2 \imod{\eta_6 \eta_8} \\
\eqref{new}
\end{array}
 \right\} \\
& \ll & 2^{\omega(\eta_6 \eta_8)}
\left( \frac{B^{1/2}}{\eta_2^{1/2}\eta_3^{1/2}\eta_4^{1/2}\eta_5^{1/2}\eta_7^{1/2} \eta_6 |\eta_8| } + 1 \right) \textrm{,}
\end{eqnarray*}
where we have used $\gcd(\eta_3 \eta_5 \eta_7, \eta_6 \eta_8) = 1$ and the fact that for $a$ and $q$ two coprime integers and for $X \geq 1$, we have
\begin{eqnarray*}
\# \left\{ n \in \mathbb{Z}_{\neq 0},
\begin{array}{l}
n^2 \equiv a \imod{q} \\
|n| \leq X
\end{array}
\right \} & \ll & 2^{\omega(q)} \left( \frac{X}{q} + 1 \right) \textrm{.}
\end{eqnarray*}
Since we are dealing with the case where $\eta_1 \geq \eta_6, |\eta_8|$, we have
$\eta_1 \geq \eta_6^{1/2} |\eta_8|^{1/2}$. Combining this inequality with the condition \eqref{height4}, we get
\begin{eqnarray*}
\eta_2 \eta_3 \eta_4 \eta_5 \eta_6^{3/2} \eta_7 |\eta_8|^{3/2} & \leq & B \textrm{.}
\end{eqnarray*}
Summing our upper bound for $N_0'$ over $\eta_2$ for instance using this condition gives
\begin{eqnarray*}
\sum_{\eta_2} N_0' & \ll & 2^{\omega(\eta_6 \eta_8)}
\left( \frac{B}{\eta_3 \eta_4 \eta_5 \eta_7 \eta_6^{7/4} |\eta_8|^{7/4}}
+ \frac{B}{\eta_3 \eta_4 \eta_5 \eta_7 \eta_6^{3/2} |\eta_8|^{3/2} } \right) \textrm{.}
\end{eqnarray*}
Summing over the six remaining variables completes the proof of the lemma.
\end{proof}

Since $(\eta_9,\eta_{10}) \mapsto (-\eta_9,-\eta_{10})$ is a bijection on $\mathcal{T}(B)$, we can assume that $\eta_{10} > 0$ multiplying our main term by $2$ once again. Let $A > 0$ be a constant to be chosen later and let $N_A(B)$ be the overall contribution of the $(\eta_1, \dots, \eta_{10}) \in \mathcal{T}(B)$ subject to the conditions
\begin{eqnarray}
\label{sym}
& & 0 < \eta_{10} \leq |\eta_9| \textrm{,} \\
\label{log6}
& & \log(B)^A \leq \eta_6 \textrm{,} \\
\label{log8}
& & \log(B)^A \leq |\eta_8| \textrm{.}
\end{eqnarray}
Note that, combining the condition \eqref{height4} and the assumption \eqref{log8}, we get
\begin{eqnarray}
\label{control'}
\log(B)^A \eta_1 \eta_2 \eta_3 \eta_4 \eta_5 \eta_6 \eta_7 & \leq & B \textrm{.}
\end{eqnarray}
Lemmas \ref{T}, \ref{lemmalog} and \ref{equal} give us the following result.

\begin{lemma}
\label{N_A(B)}
For any fixed $A > 0$, we have the estimate
\begin{eqnarray*}
N_{U,H}(B) & = & 4 N_A(B) + O \left( B \log(B)^5 \log(\log(B)) \right) \textrm{.}
\end{eqnarray*}
\end{lemma}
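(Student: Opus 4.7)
The plan is to invoke Lemma~\ref{T} to rewrite $N_{U,H}(B)$ as $\#\mathcal{T}(B)$, and then to exploit two commuting involutions of $\mathcal{T}(B)$ together with Lemmas~\ref{lemmalog} and~\ref{equal} to produce both the factor $4$ and the additional restrictions defining $N_A(B)$.

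The first step is to verify that the two maps
\[
\tau : (\eta_9, \eta_{10}) \longmapsto (-\eta_9, -\eta_{10}),
\]
\[
\sigma : (\eta_2, \eta_3, \eta_4, \eta_5, \eta_6, \eta_8, \eta_9, \eta_{10}) \longmapsto (\eta_4, \eta_5, \eta_2, \eta_3, \eta_8, \eta_6, \eta_{10}, \eta_9)
\]
(both fixing the remaining coordinates) are involutive bijections of $\mathcal{T}(B)$ onto itself. For $\tau$ this is immediate, since the torsor equation~\eqref{torsor}, the height conditions~\eqref{height1}--\eqref{height4} and the coprimality conditions~\eqref{gcd1}--\eqref{gcd7} only involve the absolute values of $\eta_9$ and $\eta_{10}$; moreover $\tau$ has no fixed point because $\eta_9 \eta_{10} \neq 0$ on $\mathcal{T}(B)$. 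For $\sigma$, one observes that each of the three monomials $\eta_1 \eta_6 \eta_8$, $\eta_3 \eta_5 \eta_7^2$ and $\eta_9 \eta_{10}$ is individually invariant, so~\eqref{torsor} is preserved; the conditions \eqref{height1} and \eqref{height3} are exchanged while \eqref{height2} and \eqref{height4} are preserved; and the transformed coprimality relations follow from~\eqref{gcd1}--\eqref{gcd7} combined with the pairwise coprimality of the three monomials noted just before Lemma~\ref{T}.

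Next, $\sigma$ and $\tau$ commute and generate a group $G \simeq (\mathbb{Z}/2\mathbb{Z})^2$ acting on $\mathcal{T}(B)$. Writing $S_{=} = \#\{(\eta_1, \ldots, \eta_{10}) \in \mathcal{T}(B) : |\eta_9| = |\eta_{10}|\}$, one checks that the only non-identity element of $G$ with a fixed point in $\mathcal{T}(B)$ is $\sigma$, and $\sigma$ only fixes points with $\eta_9 = \eta_{10}$. Hence $G$ acts freely on $\mathcal{T}(B) \setminus \{|\eta_9| = |\eta_{10}|\}$, partitioning it into orbits of size $4$, each meeting the cell $\{0 < \eta_{10} < |\eta_9|\}$ in exactly one point. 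Combined with the bound $S_{=} \ll B \log(B)^4$ from Lemma~\ref{equal}, this yields
\[
\#\bigl\{(\eta_1, \ldots, \eta_{10}) \in \mathcal{T}(B) : 0 < \eta_{10} \leq |\eta_9|\bigr\} = \tfrac{1}{4}\,\#\mathcal{T}(B) + O\bigl(B \log(B)^4\bigr).
\]

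Finally, further imposing $\eta_6 \geq \log(B)^A$ and $|\eta_8| \geq \log(B)^A$ to pass from this count to $N_A(B)$ removes $O_A(B \log(B)^5 \log(\log(B)))$ elements, by Lemma~\ref{lemmalog} applied with $i = 6$ and $i = 8$ in turn. Together with Lemma~\ref{T}, this produces the claimed identity. The only non-routine check is the invariance of the full coprimality system under $\sigma$, but this is purely symbolic provided one remembers to invoke the pairwise coprimality of the three monomials in addition to~\eqref{gcd1}--\eqref{gcd7}; no analytic input beyond Lemmas~\ref{lemmalog} and~\ref{equal} is required.
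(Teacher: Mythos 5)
Your argument is correct and takes the same route as the paper, which states the lemma as an immediate consequence of Lemmas~\ref{T}, \ref{lemmalog} and \ref{equal} via the same two symmetries. Two small points of care: as written, $\sigma$ sends $\eta_6 \mapsto \eta_8$, which can be negative while $\eta_6$ must remain positive in $\mathcal{T}(B)$, so one should take new $\eta_6 = |\eta_8|$ and new $\eta_8 = \operatorname{sgn}(\eta_8)\,\eta_6$ (preserving the monomial $\eta_1 \eta_6 \eta_8$); and $\sigma\tau$ may also have fixed points (where $\eta_9 = -\eta_{10}$), but since all fixed points of non-identity elements of $G$ force $|\eta_9| = |\eta_{10}|$, your free-action conclusion on the complement of that set, and hence the final count, still hold.
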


The end of the proof is devoted to the estimation of $N_A(B)$.

\subsection{Application of lemma \ref{lemma tau}}

We take care of the equation \eqref{torsor} seeing it as a congruence modulo $\eta_{10}$.  For this, we replace $\eta_9$ by its value given by the equation \eqref{torsor} in the height conditions \eqref{height1} and \eqref{sym}. These conditions become
\begin{eqnarray*}
\eta_1 \eta_4^2 \eta_5 \eta_6^2 \left| \eta_3 \eta_5 \eta_7^2 + \eta_1 \eta_6 \eta_8 \right| & \leq & B \eta_{10} \textrm{,} \\
\eta_{10}^2 & \leq & \left| \eta_3 \eta_5 \eta_7^2 + \eta_1 \eta_6 \eta_8 \right| \textrm{,}
\end{eqnarray*}
and we carry on denoting them respectively by \eqref{height1} and \eqref{sym}. From now on, we set
$\boldsymbol{\eta} = (\eta_2, \eta_3, \eta_4, \eta_5, \eta_7, \eta_{10})$ and we consider that
$\boldsymbol{\eta} \in \mathbb{Z}_{> 0}^6$ is fixed and is subject to the conditions \eqref{height2} and \eqref{new} and to the coprimality conditions \eqref{gcd5}, \eqref{gcd6} and \eqref{gcd7}. Let $N(\boldsymbol{\eta},B)$ be the number of $(\eta_1, \eta_6, \eta_8, \eta_9) \in \mathbb{Z}_{> 0}^2 \times \mathbb{Z}_{\neq 0}^2$ satisfying the equation \eqref{torsor}, the conditions \eqref{height1}, \eqref{height3}, \eqref{height4}, \eqref{sym}, \eqref{log6}, \eqref{log8} and finally the coprimality conditions \eqref{gcd1}, \eqref{gcd2}, \eqref{gcd3} and \eqref{gcd4}. Recall the definition \eqref{varphi} of $\varphi^{\ast}$. The goal of this section is to prove the following lemma.

\begin{lemma}
\label{lemma inter}
For any fixed $A \geq 6$, we have the estimate
\begin{eqnarray*}
N(\boldsymbol{\eta},B) & = & \frac1{\eta_{10}}
\sum_{\substack{k_9|\eta_2 \\ \gcd(k_9,\eta_3) = 1}} \frac{\mu(k_9)}{k_9 \varphi^{\ast}(k_9\eta_{10})}
\sum_{\substack{k_1|\eta_2\eta_3\eta_4\eta_5\eta_7 \\ \gcd(k_1, k_9\eta_{10})=1}} \mu(k_1) \\
& & \sum_{\substack{k_6|\eta_2\eta_3\eta_5\eta_7 \\ \gcd(k_6, k_9\eta_{10})=1}} \mu(k_6)
\sum_{\ell_1 | k_9 \eta_{10}} \mu(\ell_1) C(\boldsymbol{\eta},B) + R(\boldsymbol{\eta},B) \textrm{,}
\end{eqnarray*}
where, with the notations $\eta_1 = k_1 \ell_1 \eta_1''$ and $\eta_6 = k_6 \eta_6'$,
\begin{eqnarray*}
C(\boldsymbol{\eta},B) & = & \# \left\{ \left( \eta_1'', \eta_6', \eta_8 \right) \in \mathbb{Z}_{> 0}^2 \times \mathbb{Z}_{\neq 0},
\begin{array}{l}
\gcd(\eta_6'\eta_8,k_9 \eta_{10}) = 1 \\
\eqref{height1}, \eqref{height3}, \eqref{height4} \\
\eqref{sym}, \eqref{log6}, \eqref{log8} \\
\eqref{gcd4} \\
\end{array}
\right\} \textrm{,}
\end{eqnarray*}
and where
\begin{eqnarray*}
\sum_{\boldsymbol{\eta}} R(\boldsymbol{\eta},B) & \ll & B \log(B)^5 \textrm{.}
\end{eqnarray*}
\end{lemma}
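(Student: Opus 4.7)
The plan is to use the torsor equation to eliminate $\eta_9$ in favor of a congruence modulo $\eta_{10}$, then dispatch the remaining coprimality conditions \eqref{gcd1}, \eqref{gcd2}, \eqref{gcd3} via successive Möbius inversions, and finally invoke lemma \ref{lemma tau}. The coprimality \eqref{gcd4} is kept as a passive filter throughout, with the piece $\gcd(\eta_6',\eta_8)=1$ absorbed into the parameter $b$ of the lemma.

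\textbf{Successive Möbius inversions.} Using $\eta_9\eta_{10}=-\eta_1\eta_6\eta_8-\eta_3\eta_5\eta_7^2$, I eliminate $\eta_9$. The right-hand side is coprime to $\eta_{10}$ by \eqref{gcd5} and \eqref{gcd7}, so the congruence $\eta_1\eta_6\eta_8\equiv-\eta_3\eta_5\eta_7^2\pmod{\eta_{10}}$ is well-posed and in particular forces $\gcd(\eta_i,\eta_{10})=1$ for $i\in\{1,6,8\}$, whence $\gcd(\eta_9,\eta_{10})=1$. A direct case analysis using the torsor equation shows that once \eqref{gcd2}, \eqref{gcd3}, \eqref{gcd4} are imposed the coprimalities $\gcd(\eta_9,\eta_i)=1$ for $i\in\{1,3,5,6,7,8\}$ are automatic, so \eqref{gcd1} reduces to $\gcd(\eta_9,\eta_2)=1$. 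Möbius inverting this residual condition introduces $k_9\mid\gcd(\eta_9,\eta_2)$, which lifts the congruence to modulus $k_9\eta_{10}$; the restriction $\gcd(k_9,\eta_3)=1$ follows from $k_9\mid\eta_9$ and $\gcd(\eta_9,\eta_3)=1$. Analogously, I Möbius invert the residual parts of \eqref{gcd2} and \eqref{gcd3}, namely $\gcd(\eta_1,\eta_2\eta_3\eta_4\eta_5\eta_7)=1$ and $\gcd(\eta_6,\eta_2\eta_3\eta_5\eta_7)=1$, introducing $k_1,k_6$ with $\gcd(k_1,k_9\eta_{10})=\gcd(k_6,k_9\eta_{10})=1$ (terms outside this range vanish since $\gcd(\eta_1,k_9\eta_{10})=\gcd(\eta_6,k_9\eta_{10})=1$ automatically). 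Writing $\eta_1=k_1\eta_1'$ and $\eta_6=k_6\eta_6'$, the congruence becomes $\eta_1'\eta_6'\eta_8\equiv a\pmod{k_9\eta_{10}}$ with $a$ coprime to $k_9\eta_{10}$.

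\textbf{Applying lemma \ref{lemma tau}.} Setting $u=\eta_1'$, $v=\eta_6'$, $w=\eta_8$, $q=k_9\eta_{10}$, with $b$ chosen so that $\gcd(v,bw)=1$ encodes the piece $\gcd(\eta_6',\eta_8)=1$ of \eqref{gcd4}, lemma \ref{lemma tau} converts the congruence count into $\varphi(k_9\eta_{10})^{-1}$ times the count satisfying $\gcd(\eta_1'\eta_6'\eta_8,k_9\eta_{10})=1$ and the remaining filters. The hypothesis $q\leq X^{1/2+1/230}$ is supplied by \eqref{new} together with $k_9$ lying in the admissible range; tuples outside it are pushed into the error term $R(\boldsymbol{\eta},B)$ via the trivial bound. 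A final Möbius inversion $\mathbf{1}_{\gcd(\eta_1',k_9\eta_{10})=1}=\sum_{\ell_1\mid\gcd(\eta_1',k_9\eta_{10})}\mu(\ell_1)$ detaches the $\eta_1'$-coprimality from the other filters; writing $\eta_1'=\ell_1\eta_1''$, the resulting count is exactly $C(\boldsymbol{\eta},B)$. Using $\varphi(k_9\eta_{10})=k_9\eta_{10}\varphi^{\ast}(k_9\eta_{10})$ and reassembling the Möbius prefactors produces the stated main term.

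\textbf{Main obstacle: error control.} The delicate point is to establish $\sum_{\boldsymbol{\eta}}R(\boldsymbol{\eta},B)\ll B\log(B)^5$. The error in lemma \ref{lemma tau} splits as a power-saving term $\sigma_{-\lambda}(b)^{1/4}X^{1/2+\varepsilon}q^{-1/600}(X/q^2)^{1/2-\vartheta}$ and a log-loss term $X\log(X)\varphi(q)^{-1}(L_1^{-1}+L_2^{-1})$. Summing over the Möbius variables $k_1,k_6,\ell_1$ produces divisor-type sums that are harmless thanks to $\sigma_{-\lambda}(b)^{1/4}$ having average order $O(1)$, while the combined sum over $k_9\mid\eta_2$ and over the six coordinates of $\boldsymbol{\eta}$ constrained by \eqref{height2} and \eqref{new} exploits the power saving of the first term to remain within $B\log(B)^5$. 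For the second term, the lower bounds $L_1=L_2=\log(B)^A$ imposed by \eqref{log6} and \eqref{log8} contribute a factor $\log(B)^{-A}$ which must absorb the logarithmic losses coming from summing over the components of $\boldsymbol{\eta}$; the hypothesis $A\geq6$ is precisely what makes this balance succeed.
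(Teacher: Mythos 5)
Your proposal follows essentially the same route as the paper: eliminate $\eta_9$ in favour of a congruence modulo $\eta_{10}$, remove \eqref{gcd1}, \eqref{gcd2}, \eqref{gcd3} by successive M\"obius inversions to reach the modulus $k_9\eta_{10}$, apply lemma \ref{lemma tau} (after restricting to $k_9\eta_{10}\le X^{1/2+1/230}$ via \eqref{new} and pushing large $k_9$ into the error), and detach the $\eta_1'$-coprimality with a final M\"obius sum over $\ell_1\mid k_9\eta_{10}$ to obtain $C(\boldsymbol{\eta},B)$. The only substantive difference is organizational: you identify directly that \eqref{gcd1} collapses to $\gcd(\eta_9,\eta_2)=1$ given the torsor equation and \eqref{gcd2}--\eqref{gcd4}, whereas the paper M\"obius-inverts over the full product $\eta_1\eta_2\eta_3\eta_5\eta_6\eta_7\eta_8$ and then observes that $S_{k_9}$ vanishes unless $k_9\mid\eta_2$ and $\gcd(k_9,\eta_3\eta_5\eta_7)=1$ --- the two manoeuvres are equivalent. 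Two small points the paper handles that you pass over: (i) replacing the equation by the congruence requires subtracting the $\eta_9'=0$ contribution, which is dealt with separately and is negligible; (ii) your error discussion is qualitative where the paper computes each piece (the $\eta_9'=0$ term, the large-$k_9$ tail on both sides of the main-term application, and the two error terms of lemma \ref{lemma tau}) and verifies it is $\ll B\log(B)^5$, with $A\ge6$ arising precisely from the log-loss term $X\log(X)\varphi(q)^{-1}(L_1^{-1}+L_2^{-1})$. Neither of these affects the validity of the approach.
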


The achievement of lemma \ref{lemma inter} is that the summations over $\eta_1$, $\eta_6$ and $\eta_8$ have been carried out. That is why the torsor equation $\eqref{torsor}$ does not appear in the definition of 
$C(\boldsymbol{\eta},B)$ and also why we find $1/\eta_{10}$ in the main term of $N(\boldsymbol{\eta},B)$.

Let us remove the coprimality condition \eqref{gcd1} using a Möbius inversion. We get
\begin{eqnarray*}
N(\boldsymbol{\eta},B) & = & \sum_{k_9 | \eta_1\eta_2\eta_3\eta_5\eta_6\eta_7\eta_8} \mu(k_9) S_{k_9}(\boldsymbol{\eta},B) \textrm{,}
\end{eqnarray*}
where
\begin{eqnarray*}
S_{k_9}(\boldsymbol{\eta},B) & = & \# \left\{ \left( \eta_1, \eta_6, \eta_8, \eta_9' \right) \in \mathbb{Z}_{> 0}^2 \times \mathbb{Z}_{\neq 0}^2,
\begin{array}{l}
\eta_1 \eta_6 \eta_8 = -  \eta_3 \eta_5 \eta_7^2 - k_9 \eta_9' \eta_{10} \\
\eqref{height1}, \eqref{height3}, \eqref{height4} \\
\eqref{sym}, \eqref{log6}, \eqref{log8} \\
\eqref{gcd2}, \eqref{gcd3}, \eqref{gcd4} \\
\end{array}
\right\} \textrm{.}
\end{eqnarray*}
Since $\gcd(\eta_1 \eta_6 \eta_8, \eta_3 \eta_5 \eta_7) = 1$, $S_{k_9}(\boldsymbol{\eta},B)$ vanishes if $k_9$ and $\eta_1\eta_3\eta_5\eta_6\eta_7\eta_8$ are not coprime thus we can assume that
$\gcd(k_9, \eta_1\eta_3\eta_5\eta_6\eta_7\eta_8) = 1$. In addition, replacing the equation
$\eta_1 \eta_6 \eta_8 = -  \eta_3 \eta_5 \eta_7^2 - k_9 \eta_9' \eta_{10}$ by
$\eta_1 \eta_6 \eta_8 \equiv -  \eta_3 \eta_5 \eta_7^2 \imod{k_9 \eta_{10}}$ in $S_{k_9}(\boldsymbol{\eta},B)$ yields an error term $R_0(\boldsymbol{\eta},B)$ corresponding to the fact that $\eta_9'$ is not allowed to be equal to $0$. Otherwise, since
$\eta_1 \eta_6 \eta_8$ and $\eta_3 \eta_5 \eta_7$ are coprime, we would necessarily have
$\eta_1 = \eta_6 = |\eta_8| = \eta_3 = \eta_5 = \eta_7 = 1$ and thus the overall contribution of this error term is
\begin{eqnarray*}
\sum_{k_9, \boldsymbol{\eta}} |\mu(k_9)| R_0(\boldsymbol{\eta},B) & \ll & \sum_{\eta_2, \eta_4, \eta_{10}} 2^{\omega(\eta_2)} \\
& \ll & \sum_{\eta_2, \eta_4} 2^{\omega(\eta_2)} \frac{B^{1/2}}{\eta_2^{1/2}\eta_4^{1/2}} \\
& \ll & B \log(B)^2 \textrm{,}
\end{eqnarray*}
where we have summed over $\eta_{10}$ using the condition \eqref{new}. We now remove the two coprimality conditions \eqref{gcd2} and \eqref{gcd3} using Möbius inversions. We find that the main term of $N(\boldsymbol{\eta},B)$ is equal to
\begin{eqnarray*}
\sum_{\substack{k_9|\eta_2 \\ \gcd(k_9,\eta_3\eta_5\eta_7) = 1}} \mu(k_9)
\sum_{k_1|\eta_2\eta_3\eta_4\eta_5\eta_7\eta_{10}} \mu(k_1) \sum_{k_6|\eta_2\eta_3\eta_5\eta_7\eta_{10}} \mu(k_6) S(\boldsymbol{\eta},B) \textrm{,}
\end{eqnarray*}
where
\begin{eqnarray*}
S(\boldsymbol{\eta},B) & = & \# \left\{ \left( \eta_1', \eta_6', \eta_8 \right) \in \mathbb{Z}_{> 0}^2 \times \mathbb{Z}_{\neq 0},
\begin{array}{l}
k_1 k_6 \eta_1' \eta_6' \eta_8 \equiv -  \eta_3 \eta_5 \eta_7^2 \imod{k_9 \eta_{10}} \\
\eqref{height1}, \eqref{height3}, \eqref{height4} \\
\eqref{sym}, \eqref{log6}, \eqref{log8} \\
\eqref{gcd4} \\
\end{array}
\right\} \textrm{,}
\end{eqnarray*}
and where we use the notations $\eta_1 = k_1 \eta_1'$ and $\eta_6 = k_6 \eta_6'$. Since $\eta_3 \eta_5 \eta_7$ and $k_9 \eta_{10}$ are coprime, we can add the condition $\gcd(k_1 k_6, k_9 \eta_{10}) = 1$ in the summations over $k_1$ and $k_6$ and the congruence can therefore be rewritten as $\eta_1' \eta_6' \eta_8 \equiv a \imod{k_9 \eta_{10}}$ where we have set
$a = - \left( k_1 k_6 \right)^{-1} \eta_3 \eta_5 \eta_7^2$. Define
\begin{eqnarray}
\label{X}
X & = & \frac{B}{k_1 k_6 \eta_2 \eta_3 \eta_4 \eta_5 \eta_7} \textrm{.}
\end{eqnarray}
We see that we are almost in position to apply lemma \ref{lemma tau}, however, we still need to check that we can assume that
$k_9 \eta_{10} \leq X^{1/2 + 1/230}$. The condition \eqref{new} can be rewritten as
\begin{eqnarray*}
\eta_{10} & \leq & \left( 2 k_1 k_6 \right)^{1/2} X^{1/2} \textrm{,}
\end{eqnarray*}
thus we need to check that the summation over $k_9$ can be restricted to 
\begin{eqnarray*}
k_9 & \leq & \left( 2 k_1 k_6 \right)^{-1/2} X^{1/230} \textrm{.}
\end{eqnarray*}
Indeed, let $N_1(\boldsymbol{\eta},B)$ be the contribution to $N(\boldsymbol{\eta},B)$ under the assumption
\begin{eqnarray*}
k_9 & > & \left( 2 k_1 k_6 \right)^{-1/2} X^{1/230} \textrm{.}
\end{eqnarray*}
We clearly have
\begin{eqnarray*}
S(\boldsymbol{\eta},B) & \leq & \# \left\{ \left( \eta_1', \eta_6', \eta_8 \right) \in \mathbb{Z}_{> 0}^2 \times \mathbb{Z}_{\neq 0},
\begin{array}{l}
 \eta_1' \eta_6' \eta_8 \equiv a \imod{k_9 \eta_{10}} \\
\eta_1' \eta_6' |\eta_8| \leq X \\
\end{array}
\right\} \\
& = & \sum_{\substack{1 \leq |n| \leq X \\ n \equiv a \imod{k_9 \eta_{10}}}} \tau_3(|n|) \\
& \ll & X^{\varepsilon} \left( \frac{X}{k_9 \eta_{10}} + 1 \right) \textrm{,}
\end{eqnarray*}
for any $\varepsilon > 0$. Let us use $k_9^{1/2} > \left( 2 k_1 k_6 \right)^{-1/4} X^{1/460}$, we obtain
\begin{eqnarray}
\label{assumption k_9}
S(\boldsymbol{\eta},B) & \ll & \left( k_1 k_6 \right)^{-1/4}
\frac{X^{1 - 1/460 + \varepsilon}}{k_9^{1/2} \eta_{10}} + X^{\varepsilon} \textrm{.}
\end{eqnarray}
Note that when one of the $k_i$ appears at the denominator of an error term then the arithmetic function involved by the Möbius inversion has average order $O(1)$ and consequently does not play any part in the estimation of the overall contribution of this error term. We thereby obtain
\begin{eqnarray*}
N_1(\boldsymbol{\eta},B) & \ll & \frac1{\eta_{10}}
\left( \frac{B}{\eta_2 \eta_3 \eta_4 \eta_5 \eta_7} \right)^{1 - 1/460 + \varepsilon} +
2^{\omega(\eta_2)} \left( \frac{B}{\eta_2 \eta_3 \eta_4 \eta_5 \eta_7} \right)^{\varepsilon} \textrm{.}
\end{eqnarray*}
Let us sum over $\eta_{10}$ using \eqref{new}, we get 
\begin{eqnarray*}
\sum_{\eta_{10}} N_1(\boldsymbol{\eta},B) & \ll & 
\left( \frac{B}{\eta_2 \eta_3 \eta_4 \eta_5 \eta_7} \right)^{1 - 1/460 + 2 \varepsilon} +
2^{\omega(\eta_2)} \left( \frac{B}{\eta_2 \eta_3 \eta_4 \eta_5 \eta_7} \right)^{1/2 + \varepsilon} \textrm{.}
\end{eqnarray*}
Choosing $\varepsilon = 1/1840$ and summing over $\eta_4$ using \eqref{height2}, we finally obtain that the overall contribution of the first term is
\begin{eqnarray*}
& \ll & \sum_{\eta_2, \eta_3, \eta_5, \eta_7} \frac{B}{\eta_2 \eta_3^{1+1/920} \eta_5^{1+1/920} \eta_7^{1+1/460}} \\
& \ll & B \log(B) \textrm{,}
\end{eqnarray*}
and the overall contribution of the second term is
\begin{eqnarray*}
& \ll & \sum_{\eta_2, \eta_3, \eta_5, \eta_7} 2^{\omega(\eta_2)}
\frac{B}{\eta_2 \eta_3^{3/2 - 1/1840} \eta_5^{3/2 - 1/1840} \eta_7^{2 - 1/920}} \\
& \ll & B \log(B)^2 \textrm{.}
\end{eqnarray*}
Let us sum up what we have done until now. We have proved that
\begin{eqnarray*}
N(\boldsymbol{\eta},B) & = &  \sum_{\substack{k_1|\eta_2\eta_3\eta_4\eta_5\eta_7 \\ \gcd(k_1,\eta_{10}) = 1}} \mu(k_1)
\sum_{\substack{k_6|\eta_2\eta_3\eta_5\eta_7 \\ \gcd(k_6,\eta_{10}) = 1}} \mu(k_6)
\! \! \! \! \sum_{\substack{k_9|\eta_2 \\ k_9 \leq \left( 2 k_1 k_6 \right)^{-1/2} X^{1/230} \\
\gcd(k_9, k_1 k_6 \eta_3\eta_5\eta_7) = 1}} \! \! \! \! \mu(k_9)  S(\boldsymbol{\eta},B) \\
& & + R_1(\boldsymbol{\eta},B)\textrm{,}
\end{eqnarray*}
where $X$ is defined in \eqref{X} and where
\begin{eqnarray*}
\sum_{\boldsymbol{\eta}} R_1(\boldsymbol{\eta},B) & \ll & B \log(B)^2 \textrm{.}
\end{eqnarray*}
We now aim to apply lemma \ref{lemma tau} with $b = k_6 \eta_3 \eta_4 \eta_5 \eta_7 \eta_{10}$ and
\begin{eqnarray*}
X_1 & = & \frac{B \eta_{10}}{k_1^2 k_6^3 \eta_4^2 \eta_5} \textrm{,} \\
X_2 & = & \frac{B}{k_1 \eta_2^2 \eta_3 \eta_{10}} \textrm{,} \\
T & = & \frac{\eta_3 \eta_5 \eta_7^2}{k_1 k_6} \textrm{,} \\
Z & = & \frac{\eta_{10}^2}{k_1 k_6} \textrm{,}
\end{eqnarray*}
and finally $L_1 = \log(B)^A / k_6$ and $L_2 = \log(B)^A$. The condition \eqref{height2} shows that we actually have $T \leq X$ and furthermore, we also have $k_9 \eta_{10} \leq X^{1/2 + 1/230}$ thus we can apply lemma \ref{lemma tau}. Recall the definitions of $\vartheta$, $\lambda$ and $\sigma_{- \lambda}$ respectively given in lemmas~\ref{pre lemma tau} and \ref{Fri-Iwa lemma 2} and in \eqref{sigma}. We get that for any fixed $\varepsilon > 0$,
\begin{eqnarray*}
S(\boldsymbol{\eta},B) - S^{\ast}(\boldsymbol{\eta},B) & \ll & 
\sigma_{- \lambda}(b)^{1/4} \frac{X^{1 - \vartheta + \varepsilon}}{(k_9 \eta_{10})^{1 - 2 \vartheta + 1/600}} +
\frac{X \log(X)}{\varphi(k_9 \eta_{10})} \frac{k_6}{\log(B)^A} \textrm{,}
\end{eqnarray*}
where
\begin{eqnarray*}
S^{\ast}(\boldsymbol{\eta},B) & = & \frac1{\varphi(k_9 \eta_{10})}
\# \left\{ \left( \eta_1', \eta_6', \eta_8 \right) \in \mathbb{Z}_{> 0}^2 \times \mathbb{Z}_{\neq 0},
\begin{array}{l}
\gcd(\eta_1'\eta_6'\eta_8,k_9 \eta_{10}) = 1 \\
\eqref{height1}, \eqref{height3}, \eqref{height4} \\
\eqref{sym}, \eqref{log6}, \eqref{log8} \\
\eqref{gcd4} \\
\end{array}
\right\} \textrm{.}
\end{eqnarray*}
Let us estimate the contribution of these two error terms. As explained earlier, the Möbius inversions do not intervene in the estimation of the first error term and neither does $\sigma_{- \lambda}(b)^{1/4}$ since its average order is $O(1)$. Summing over $\eta_{10}$ using the condition \eqref{new}, we easily get that the overall contribution of the first error term is
\begin{eqnarray*}
\sum_{\boldsymbol{\eta}} 
\frac{B^{1 - \vartheta + \varepsilon}}{(\eta_2\eta_3\eta_4\eta_5\eta_7)^{1 - \vartheta + \varepsilon}
\eta_{10}^{1 - 2 \vartheta + 1/600}} & \ll & \sum_{\eta_2, \eta_3, \eta_4, \eta_5, \eta_7}
\frac{B^{1 - 1/1200 + \varepsilon}}{(\eta_2\eta_3\eta_4\eta_5\eta_7)^{1 - 1/1200 + \varepsilon}} \textrm{.}
\end{eqnarray*}
Choosing $\varepsilon = 1/2400$ and summing over $\eta_2$ using the height condition \eqref{height2}, we deduce
\begin{eqnarray*}
\sum_{\eta_2, \eta_3, \eta_4, \eta_5, \eta_7}
\frac{B^{1 - 1/2400}}{(\eta_2\eta_3\eta_4\eta_5\eta_7)^{1 - 1/2400}} & \ll &
\sum_{\eta_3, \eta_4, \eta_5, \eta_7} \frac{B}{\eta_3^{1 + 1/2400} \eta_4 \eta_5^{1+1/2400} \eta_7^{1+1/1200}} \\
& \ll & B \log(B) \textrm{.}
\end{eqnarray*}
In addition, the overall contribution of the second error term is bounded by
\begin{eqnarray*}
\sum_{\boldsymbol{\eta}} 2^{\omega(\eta_2 \eta_3 \eta_5 \eta_7)} \frac{B \log(B)^{1-A}}{\eta_2 \eta_3 \eta_4 \eta_5 \eta_7 \eta_{10}} & \ll & B \log(B)^{11-A} \textrm{,}
\end{eqnarray*}
which is satisfactory if $A \geq 6$. Let us now prove that we can remove the condition
$k_9 \leq \left( 2 k_1 k_6 \right)^{-1/2} X^{1/230}$ from the sum over $k_9$. We clearly have 
\begin{eqnarray*}
S^{\ast}(\boldsymbol{\eta},B) & \ll & \frac{X^{1 + \varepsilon}}{\varphi(k_9 \eta_{10})} \textrm{.}
\end{eqnarray*}
Therefore, mimicking what we have done to deal with the first error term in \eqref{assumption k_9} proves that the contribution corresponding to $k_9 > \left( 2 k_1 k_6 \right)^{-1/2} X^{1/230}$ is $\ll B \log(B)$ and thus the condition
$k_9 \leq \left( 2 k_1 k_6 \right)^{-1/2} X^{1/230}$ can actually be removed from the sum over $k_9$. We also see that since $\gcd(\eta_2,\eta_5\eta_7) = 1$, we can remove the condition $\gcd(k_9,\eta_5\eta_7) = 1$ from the sum over $k_9$. To complete the proof of lemma \ref{lemma inter}, we simply notice that, with the notation $\eta_1 = k_1 \ell_1 \eta_1''$, we have
\begin{eqnarray*}
S^{\ast}(\boldsymbol{\eta},B) & = & \sum_{\ell_1 | k_9 \eta_{10}} \mu(\ell_1) C(\boldsymbol{\eta},B) \textrm{.}
\end{eqnarray*}

\subsection{Summations over $\eta_8$, $\eta_6'$ and $\eta_7$}

\label{summations section}

In the estimation of the main term obtained in lemma \ref{lemma inter}, we can choose the order in which we want to sum our variables at our best convenience. We decide to start by summing over $\eta_8$, $\eta_6'$ and $\eta_7$. We define
$\boldsymbol{\eta}' = (\eta_1'',\eta_2, \eta_3, \eta_4, \eta_5, \eta_{10})$ and we introduce the notation
\begin{eqnarray*}
\boldsymbol{\eta}'^{(r_1,r_2,r_3,r_4,r_5,r_{10})} & = & \eta_1''^{r_1} \eta_2^{r_2} \eta_3^{r_3} \eta_4^{r_4} \eta_5^{r_5} \eta_{10}^{r_{10}} \textrm{,}
\end{eqnarray*}
for $(r_1,r_2,r_3,r_4,r_5,r_{10}) \in \mathbb{Q}^6$. To ease the understanding of the height conditions we introduce the following quantities
\begin{align*}
& Y_8 = \frac{B^{1/2}}{\boldsymbol{\eta}'^{(1/2,1,1/2,0,0,1/2)}} \textrm{,} &
& Y_8' = \frac{Y_8}{k_1^{1/2} \ell_1^{1/2}} \textrm{,} \\
& Y_6 = \frac{B^{1/6}}{\boldsymbol{\eta}'^{(1/2,-1/3,-1/6,2/3,1/3,-1/2)}} \textrm{,} &
& Y_6' = \frac{Y_6}{k_1^{1/2} \ell_1^{1/2} k_6} \textrm{,} \\
& Y_7 = \frac{B^{1/3}}{\boldsymbol{\eta}'^{(0,1/3,2/3,1/3,2/3,0)}} \textrm{,} &
\end{align*}
and recalling the definition \eqref{equation h} of the function $h$, it is easy to check that the height conditions \eqref{height1}, \eqref{height2}, \eqref{height3} and \eqref{height4} can be summed up as
\begin{eqnarray*}
h \left( \frac{\eta_8}{Y_8'}, \frac{\eta_6'}{Y_6'}, \frac{\eta_7}{Y_7} \right) & \leq & 1 \textrm{.}
\end{eqnarray*}
Set $\mathcal{L} = k_1^{1/2} \ell_1^{1/2} \log(B)^A$. We also define the real-valued functions
\begin{eqnarray*}
g_1 & : & (t_6,t_7,t;\boldsymbol{\eta}',B) \mapsto
\int_{h(t_8,t_6,t_7) \leq 1, t \leq \left| t_6 t_8 + t_7^2 \right|, |t_8|Y_8 \geq \mathcal{L}} \D t_8 \textrm{,} \\
g_2 & : & (t_7,t;\boldsymbol{\eta}',B) \mapsto \int_{t_6 Y_6 \geq \mathcal{L}} g_1(t_6,t_7,t;\boldsymbol{\eta}',B) \D t_6 \textrm{,} \\
g_3 & : & (t_7,t) \mapsto \int \int_{t_6> 0, h(t_8,t_6,t_7) \leq 1, t \leq \left| t_6 t_8 + t_7^2 \right|} \D t_8 \D t_6 \textrm{,} \\
g_4 & : & t \mapsto \int \int \int_{t_6, t_7 > 0, h(t_8,t_6,t_7) \leq 1, t \leq \left| t_6 t_8 + t_7^2 \right|}
\D t_8 \D t_6 \D t_7 \textrm{.}
\end{eqnarray*}
The condition $t \leq \left| t_6 t_8 + t_7^2 \right|$ is here to take into account the condition \eqref{sym} which can be rewritten with our new notations as
\begin{eqnarray*}
\kappa & \leq & \left| \frac{\eta_6'}{Y_6'} \frac{\eta_8}{Y_8'} + \left( \frac{\eta_7}{Y_7} \right)^2 \right| \textrm{,}
\end{eqnarray*}
where $\kappa$ is defined by
\begin{eqnarray*}
\kappa & = & \frac{\eta_{10}^2}{\eta_3 \eta_5 Y_7^2} \textrm{.}
\end{eqnarray*}

\begin{lemma}
\label{bounds}
We have the bounds
\begin{eqnarray*}
g_1 \left(t_6,t_7,t;\boldsymbol{\eta}',B \right) & \ll & t_6^{-1} t_7^{-1} \textrm{,} \\
g_2 \left(t_7,t;\boldsymbol{\eta}',B \right) & \ll & 1 \textrm{.}
\end{eqnarray*}
\end{lemma}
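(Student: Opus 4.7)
The plan splits according to the two bounds. For the first inequality $g_1 \ll t_6^{-1} t_7^{-1}$, I would unpack the definition \eqref{equation h} of $h$: the condition $h(t_8,t_6,t_7) \leq 1$ includes in particular $t_6 t_7 |t_8| \leq 1$, so the $t_8$-range in the definition of $g_1$ is contained in an interval of length at most $2/(t_6 t_7)$, yielding the bound immediately.

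For the second inequality $g_2 \ll 1$, substituting the first bound naively in the $t_6$-integral defining $g_2$ produces only $g_2 \ll t_7^{-1} \log(B)$, since the effective $t_6$-range can be as large as $Y_8/(t_7 \mathcal{L})$. The idea is therefore to exploit the full strength of the cubic constraint $t_6^2 |t_7^2 + t_6 t_8| \leq 1$ hidden in $h \leq 1$ and apply lemma~\ref{elementary} to it. Concretely, I would rewrite $g_2$ as a double integral over $(t_6, t_8)$ and swap the order of integration by Fubini, so as to put $t_8$ on the outside.

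For fixed $t_8 \neq 0$, the cubic constraint reads $|t_6^3 + (t_7^2/t_8)\, t_6^2| \leq 1/|t_8|$, so lemma~\ref{elementary} with $r = 3$, $A = t_7^2/t_8$ and $Y = 1/|t_8|$---which is admissible because $h \leq 1$ forces $|t_8| \leq 1$ and hence $Y \geq 1$---yields that the set of admissible $t_6$ has measure at most $4|t_8|^{-1/3}$. Combining the lower bound $|t_8| \geq \mathcal{L}/Y_8$ built into $g_1$ with $|t_8| \leq 1$, the outer integral becomes
\[
g_2 \ll \int_{\mathcal{L}/Y_8 \leq |t_8| \leq 1} |t_8|^{-1/3}\,\D t_8 \ll 1,
\]
as required. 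The only mild obstacle is recognising that the naive approach loses a logarithm and that the cubic nature of the constraint---which is precisely why lemma~\ref{elementary} was stated with $r = 3$ in mind---must be invoked to save it.
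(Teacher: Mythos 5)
Your argument for the first bound is identical to the paper's: the condition $t_6 t_7 |t_8| \leq 1$ confines $t_8$ to an interval of length $\ll (t_6 t_7)^{-1}$.

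For the second bound, you take a genuinely different route. The paper does not swap the order of integration: it instead sharpens the pointwise bound on $g_1$ by observing that the cubic constraint $t_6^2 |t_7^2 + t_6 t_8| \leq 1$ alone confines $t_8$ to an interval of length $\ll t_6^{-3}$, while $|t_8| \leq 1$ gives length $\ll 1$, so $g_1 \leq 2\min(t_6^{-3}, 1)$; integrating this minimum in $t_6$ directly gives $g_2 \ll 1$. You instead apply Fubini, freeze $t_8$, and invoke lemma~\ref{elementary} (with $r=3$, $A = t_7^2/t_8$, $Y = 1/|t_8|$) to bound the $t_6$-measure by $4|t_8|^{-1/3}$, which is integrable near $t_8 = 0$. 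Both arguments are correct and elementary; the paper's is slightly shorter because it never needs Fubini or lemma~\ref{elementary} here. It is worth noting that your $|t_8|^{-1/3}$ device is precisely the one the paper \emph{does} use a few lines later to prove bound~\eqref{Z_8} in lemma~\ref{D_h lemma}, so you have in effect rediscovered the argument the author reserves for that neighbouring estimate. One small point: the lower bound $|t_8| \geq \mathcal{L}/Y_8$ that you carry through is unnecessary, since $\int_0^1 t_8^{-1/3}\,\D t_8$ already converges; all you need is $|t_8| \leq 1$.
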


\begin{proof}
The bound for $g_1$ follows from the inequality $t_6 t_7 |t_8| \leq 1$. In addition, the two conditions
$t_6^2 \left| t_7^2 + t_6t_8 \right| \leq 1$ and $|t_8| \leq 1$ imply
$g_1 \left(t_6,t_7,t;\boldsymbol{\eta}',B \right) \leq 2 \min \left( t_6^{-3}, 1 \right)$. Integrating this minimum over $t_6$ gives the bound for $g_2$.
\end{proof}

It is immediate to check that $\boldsymbol{\eta}'$ is restricted to lie in the region $\mathcal{V}$ defined by
\begin{eqnarray}
\label{V}
\mathcal{V} & = & \left\{ \boldsymbol{\eta}' \in \mathbb{Z}_{> 0}^6, Y_7 \geq 1, Y_8 \geq \log(B)^A \right\} \textrm{.}
\end{eqnarray} 
We consider from now on that $\boldsymbol{\eta}' \in \mathcal{V}$ and $\eta_7 \in \mathbb{Z}_{> 0}$ are fixed and are subject to the height condition \eqref{height2} and to the coprimality conditions \eqref{gcd5}, \eqref{gcd6} and \eqref{gcd7}. We set
\begin{eqnarray*}
P(\boldsymbol{\eta}',\eta_7,B) & = & \# \left\{(\eta_6', \eta_8) \in \mathbb{Z}_{> 0} \times \mathbb{Z}_{\neq 0}, (\eta_1'',\eta_6',\eta_8) \in C(\boldsymbol{\eta},B) \right\} \textrm{,}
\end{eqnarray*}
and
\begin{eqnarray}
\label{definition  N}
N(\boldsymbol{\eta}',\eta_7,B) & = & \frac1{\eta_{10}}
\sum_{\substack{k_9|\eta_2 \\ \gcd(k_9,\eta_3) = 1}} \frac{\mu(k_9)}{k_9 \varphi^{\ast}(k_9\eta_{10})}
\sum_{\substack{k_1|\eta_2\eta_3\eta_4\eta_5\eta_7 \\ \gcd(k_1, k_9\eta_{10})=1}} \mu(k_1) \\
\notag
& & \sum_{\substack{k_6|\eta_2\eta_3\eta_5\eta_7 \\ \gcd(k_6, k_9\eta_{10})=1}} \mu(k_6)
\sum_{\ell_1 | k_9 \eta_{10}} \mu(\ell_1) P(\boldsymbol{\eta}',\eta_7,B)  \textrm{.}
\end{eqnarray}
Using the estimates of lemmas \ref{N_A(B)} and \ref{lemma inter}, we see that for any fixed $A \geq 6$, we have
\begin{eqnarray}
\label{estimate N_{U,H}}
\ \ \ \ \ \ \ \ N_{U,H}(B) & = & 4 \! \! \sum_{\substack{\boldsymbol{\eta}' \in \mathcal{V} \\ \eqref{gcd6}, \eqref{gcd7}}}
\sum_{\substack{ \eta_7 \in \mathbb{Z}_{> 0} \\ \eqref{height2}, \eqref{gcd5}}} \! \! N(\boldsymbol{\eta}',\eta_7,B) 
+ O \left( B \log(B)^5 \log(\log(B)) \right) \textrm{.}
\end{eqnarray}
We now prove the following result.

\begin{lemma}
\label{summations 6, 8}
For any fixed $A \geq 10$, we have the estimate
\begin{eqnarray*}
N(\boldsymbol{\eta}',\eta_7,B) & = & \zeta(2)^{-1} \frac{Y_6Y_8}{\eta_{10}}
g_2 \left( \frac{\eta_7}{Y_7}, \kappa; \boldsymbol{\eta}', B \right)
\theta_1(\boldsymbol{\eta}') \theta_2(\boldsymbol{\eta}',\eta_7)
+ R(\boldsymbol{\eta}',\eta_7,B) \textrm{,}
\end{eqnarray*}
where $\theta_1(\boldsymbol{\eta}')$ and $\theta_2(\boldsymbol{\eta}',\eta_7)$ are arithmetic functions respectively defined in \eqref{theta1} and \eqref{theta2} and where
\begin{eqnarray*}
\sum_{\boldsymbol{\eta}', \eta_7} R(\boldsymbol{\eta}',\eta_7,B) & \ll & B \log(B)^5 \textrm{.}
\end{eqnarray*}
\end{lemma}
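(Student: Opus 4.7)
The plan is to compute $P(\boldsymbol{\eta}',\eta_7,B)$ by performing the inner sum over $\eta_8 \in \mathbb{Z}_{\neq 0}$ first and then the sum over $\eta_6' \in \mathbb{Z}_{>0}$, the point being to replace the two inner sums by integrals that rebuild $g_1$ and then $g_2$. The region of summation is governed by $h(\eta_8/Y_8', \eta_6'/Y_6', \eta_7/Y_7) \leq 1$ together with the $\kappa$-condition coming from \eqref{sym} and the lower bounds \eqref{log6}, \eqref{log8}.

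For the sum over $\eta_8$, I split by sign and combine the coprimality condition \eqref{gcd4} with the extra condition $\gcd(\eta_8, k_9 \eta_{10}) = 1$ inherited from $C(\boldsymbol{\eta},B)$; together they read $\gcd(\eta_8, m(\eta_6')) = 1$ with $m(\eta_6') = k_9 k_6 \eta_6' \eta_3 \eta_4 \eta_5 \eta_7 \eta_{10}$. The elementary estimate $\#\{n \in I, \gcd(n,m)=1\} = \varphi^{\ast}(m)|I| + O(2^{\omega(m)})$ yields, for fixed $\eta_6'$, the main term $2 Y_8' \varphi^{\ast}(m(\eta_6')) g_1(\eta_6'/Y_6', \eta_7/Y_7, \kappa; \boldsymbol{\eta}', B)$ plus an error of size $O(2^{\omega(m(\eta_6'))})$. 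For the outer sum over $\eta_6'$, I rearrange the weight as $\varphi^{\ast}(m(\eta_6')) = \varphi^{\ast}(m_0) \psi_{a,b}(\eta_6')$, where $m_0, a, b$ are built from $\eta_3, \eta_4, \eta_5, \eta_7, k_6, k_9, \eta_{10}$ and the $\psi_{a,b}$ factor isolates the genuine dependence on $\eta_6'$. Applying lemma \ref{arithmetic preliminary 1} to $t_6 \mapsto g_1(t_6/Y_6', \eta_7/Y_7, \kappa; \boldsymbol{\eta}', B)$ produces the main term $\zeta(2)^{-1} \Psi(a,b) Y_6' g_2(\eta_7/Y_7, \kappa; \boldsymbol{\eta}', B)$ with error $O(2^{\omega(b)} \log(B) M_I(g_1))$. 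Because $g_1$ is the volume of a family of intervals in $t_8$ that shrink and shift monotonically in $t_6$, the sign-change count $R_g(I)$ stays bounded uniformly in the parameters.

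Assembling the two steps and reinserting into \eqref{definition  N}, the factor $2 Y_8' Y_6' = 2 Y_6 Y_8 / (k_1 \ell_1 k_6)$ combines with $1/\eta_{10}$ and with the Möbius sums over $k_1, k_6, k_9, \ell_1$; regrouping the resulting density factors according to whether they depend on $\eta_7$ defines the arithmetic functions $\theta_1(\boldsymbol{\eta}')$ and $\theta_2(\boldsymbol{\eta}', \eta_7)$, with the $k_9, \ell_1$ sums contributing to $\theta_1$ and the $k_1, k_6$ sums (which involve $\eta_7$ in their divisibility constraints) together with the $\eta_7$-part of $\varphi^{\ast}(m(\eta_6'))$ contributing to $\theta_2$. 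The main obstacle is controlling the total error $\sum_{\boldsymbol{\eta}', \eta_7} R(\boldsymbol{\eta}', \eta_7, B)$: using the bounds $g_2 \ll 1$ and $g_1 \ll (t_6 t_7)^{-1}$ of lemma \ref{bounds}, together with the height conditions \eqref{height2} and \eqref{control'} and the lower bound $Y_8 \geq \log(B)^A$ from \eqref{V}, each error contribution is bounded by $B \log(B)^{c-A}$ for some constant $c$ depending only on the number of Möbius variables, and taking $A \geq 10$ yields the claimed bound $\ll B \log(B)^5$.
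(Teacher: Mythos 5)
Your route is essentially the paper's: sum over $\eta_8$ first via a coprimality count, then apply Lemma~\ref{arithmetic preliminary 1} to the $\eta_6'$-sum, and finally combine with the M\"obius sums over $k_1, k_6, k_9, \ell_1$. Two points of bookkeeping are off, one of which matters. First, your factor of $2$ is spurious: the paper's $g_1$ is defined as an integral over $t_8$ ranging over all of $\mathbb{R}$ (with only the constraint $|t_8|Y_8 \geq \mathcal{L}$), so $Y_8' g_1(\eta_6'/Y_6', \eta_7/Y_7, \kappa;\boldsymbol{\eta}',B)$ already accounts for both signs of $\eta_8$ and the ``split by sign'' should not introduce a doubling. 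Carried through as you do it, this yields a main term twice the one in the lemma. Second, the split into $\theta_1$ and $\theta_2$ is not obtained by allocating whole M\"obius sums to one factor or the other; rather, the paper first evaluates all four M\"obius sums in closed form using the identity
\begin{eqnarray*}
\sum_{\substack{k|n \\ \gcd(k,a) = 1}} \frac{\mu(k)}{k} \varphi^{+}(kb) & = &
\frac{\varphi^{\ast}(\gcd(n,b))}{\varphi^{\ast}(\gcd(n,a,b))}
\frac{\varphi^{+}(n) \varphi^{+}(b)}{\varphi^{+}(\gcd(n,ab))} \textrm{,}
\end{eqnarray*}
collapsing $\theta(\boldsymbol{\eta}',\eta_7)$ to $\varphi^{\ast}(\eta_2\eta_3\eta_5\eta_7)\varphi^{\ast}(\eta_2\eta_3\eta_4\eta_5\eta_7\eta_{10})\varphi^{+}(\eta_2\eta_3\eta_4\eta_5\eta_7\eta_{10})$, and only then factors that product into the part not involving $\eta_7$ (this is $\theta_1$) and the residual $\eta_7$-part (this is $\theta_2$, which is a ratio because of the $\gcd$'s with $\eta_3\eta_5$). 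Finally, in your error estimate the relevant input is not $Y_8 \geq \log(B)^A$ but condition \eqref{control'} rewritten as \eqref{control}, which truncates the $\eta_6'$-sum at $B\log(B)^{-A}/(\cdots)$: that is what controls the $O(2^{\omega(c\eta_6')})$ error from the $\eta_8$-sum, while the error from Lemma~\ref{arithmetic preliminary 1} is controlled via the bound $g_1 \ll t_6^{-1}t_7^{-1}$ together with the support condition $t_6 Y_6 \geq \mathcal{L}$, giving $\sup g_1 \ll (Y_6/\mathcal{L})(Y_7/\eta_7)$. The bound $Y_8 \geq \log(B)^A$ enters only later, when removing the $|t_8|Y_8 \geq \mathcal{L}$ cut from the integral.
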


Note that it is clear that the two coprimality conditions remaining in $C(\boldsymbol{\eta},B)$ can be rewritten as
$\gcd(\eta_8,\eta_3 \eta_4 \eta_5 k_6 \eta_6' \eta_7 k_9 \eta_{10}) = 1$ and $\gcd(\eta_6', k_9 \eta_{10}) = 1$. Recall the condition \eqref{control'} which can be rewritten with our new conditions as
\begin{eqnarray}
\label{control}
\eta_6' & \leq & \frac{B \log(B)^{-A}}{k_1 \ell_1 k_6 \boldsymbol{\eta}'^{(1,1,1,1,1,0)} \eta_7} \textrm{.}
\end{eqnarray}
Using a Möbius inversion and the trivial estimate
\begin{eqnarray*}
\# \left\{ n \in \mathbb{Z} \cap [t_1,t_2] \right\} & = & t_2 - t_1 + O(1) \textrm{,}
\end{eqnarray*}
we obtain
\begin{eqnarray*}
P(\boldsymbol{\eta}',\eta_7,B) & = & \sum_{\substack{\eta_6' \in \mathbb{Z}_{>0}, \eqref{control} \\ \gcd(\eta_6', k_9 \eta_{10})= 1}} \left( \varphi^{\ast}(c \eta_6') Y_8'g_1 \left( \frac{\eta_6'}{Y_6'}, \frac{\eta_7}{Y_7}, \kappa; \boldsymbol{\eta}', B \right)
+ O \left( 2^{\omega(c \eta_6')} \right) \right) \textrm{,}
\end{eqnarray*}
where we have set $c = \eta_3 \eta_4 \eta_5 k_6 \eta_7 k_9 \eta_{10}$. Recalling the definition \eqref{definition  N} of $N(\boldsymbol{\eta}',\eta_7,B)$, we see that the overall contribution of the error term is
\begin{eqnarray*}
\sum_{\boldsymbol{\eta}', \eta_7} 2^{\omega(\eta_2 \eta_3 \eta_4 \eta_5 \eta_7 \eta_{10})}
\frac{B \log(B)^{1-A}}{\boldsymbol{\eta}'^{(1,1,1,1,1,1)} \eta_7} & \ll & B \log(B)^{14 - A} \textrm{,}
\end{eqnarray*}
which is satisfactory if $A \geq 9$. Writing
$\varphi^{\ast}(c \eta_6') = \varphi^{\ast}(c) \varphi^{\ast}(\eta_6') \varphi^{\ast}( \gcd(c, \eta_6'))^{-1} $, we see that we can use lemma \ref{arithmetic preliminary 1} to sum over $\eta_6'$. Recall the definition \eqref{varphi+} of $\varphi^{+}$. We obtain that for any fixed $A \geq 9$ and $0 < \gamma \leq 1$,
\begin{eqnarray*}
P(\boldsymbol{\eta}',\eta_7,B) & = & \zeta(2)^{-1} \varphi^{\ast}(k_9 \eta_{10})
\varphi^{+}(\eta_3 \eta_4 \eta_5 k_6 \eta_7 k_9 \eta_{10}) Y_6' Y_8'
g_2 \left( \frac{\eta_7}{Y_7}, \kappa; \boldsymbol{\eta}', B \right) \\
& & + O \left( Y_8'  2^{\omega(k_9 \eta_{10})} \log(B) \sup_{t_6 Y_6 \geq \mathcal{L}}
g_1 \left( t_6, \frac{\eta_7}{Y_7}, \kappa; \boldsymbol{\eta}', B \right) \right) \textrm{.}
\end{eqnarray*}
Using the bound for $g_1$ proved in lemma \ref{bounds}, we get
\begin{eqnarray*}
\sup_{t_6 Y_6 \geq \mathcal{L}} g_1 \left( t_6, \frac{\eta_7}{Y_7}, \kappa; \boldsymbol{\eta}', B \right) & \ll &
\frac{Y_6}{\mathcal{L}} \frac{Y_7}{\eta_7} \textrm{.}
\end{eqnarray*}
Noticing that
\begin{eqnarray*}
\frac{Y_6 Y_7 Y_8}{\eta_{10}} & = & \frac{B}{\boldsymbol{\eta}'^{(1,1,1,1,1,1)}} \textrm{,}
\end{eqnarray*}
we see that the overall contribution of this error term is
\begin{eqnarray*}
\sum_{\boldsymbol{\eta}', \eta_7} 2^{\omega(\eta_2 \eta_{10})} 2^{\omega(\eta_2 \eta_3 \eta_5 \eta_7)}
\frac{B \log(B)^{1-A}}{\boldsymbol{\eta}'^{(1,1,1,1,1,1)} \eta_7} & \ll & B \log(B)^{15 - A} \textrm{,}
\end{eqnarray*}
which is satisfactory if $A \geq 10$. Recalling the definition \eqref{definition N} of $N(\boldsymbol{\eta}',\eta_7,B)$, we see that for any fixed $A \geq 10$, we have the estimate
\begin{eqnarray*}
N(\boldsymbol{\eta}',\eta_7,B) & = & \zeta(2)^{-1} \frac{Y_6Y_8}{\eta_{10}}
g_2 \left( \frac{\eta_7}{Y_7}, \kappa; \boldsymbol{\eta}', B \right) \theta(\boldsymbol{\eta}',\eta_7)
+ R(\boldsymbol{\eta}',\eta_7,B) \textrm{,}
\end{eqnarray*}
where
\begin{eqnarray*}
\sum_{\boldsymbol{\eta}',\eta_7} R(\boldsymbol{\eta}',\eta_7,B) &  \ll & B \log(B)^5 \textrm{,}
\end{eqnarray*}
and where
\begin{eqnarray*}
\theta(\boldsymbol{\eta}',\eta_7) & = &
\sum_{\substack{k_9|\eta_2 \\ \gcd(k_9,\eta_3) = 1}} \frac{\mu(k_9)}{k_9 \varphi^{\ast}(k_9\eta_{10})}
\sum_{\substack{k_1|\eta_2\eta_3\eta_4\eta_5\eta_7 \\ \gcd(k_1, k_9\eta_{10})=1}} \frac{\mu(k_1)}{k_1} 
\sum_{\substack{k_6|\eta_2\eta_3\eta_5\eta_7 \\ \gcd(k_6, k_9\eta_{10})=1}} \frac{\mu(k_6)}{k_6} \\
& & \sum_{\ell_1 | k_9 \eta_{10}} \frac{\mu(\ell_1)}{\ell_1} \varphi^{\ast}(k_9 \eta_{10})
\varphi^{+}(\eta_3 \eta_4 \eta_5 k_6 \eta_7 k_9 \eta_{10}) \textrm{.}
\end{eqnarray*}
It is straightforward to check that, given $a, b \in \mathbb{Z}_{\geq 1}$, the following equality holds
\begin{eqnarray*}
\sum_{\substack{k|n \\ \gcd(k,a) = 1}} \frac{\mu(k)}{k} \varphi^{+}(kb) & = &
\frac{\varphi^{\ast}(\gcd(n,b))}{\varphi^{\ast}(\gcd(n,a,b))}
\frac{\varphi^{+}(n) \varphi^{+}(b)}{\varphi^{+}(\gcd(n,ab))} \textrm{.}
\end{eqnarray*}
Using this equality, it is easy to simplify the expression of $\theta(\boldsymbol{\eta}',\eta_7)$ and we obtain
\begin{eqnarray*}
\theta(\boldsymbol{\eta}',\eta_7) & = & \varphi^{\ast}(\eta_2\eta_3\eta_5\eta_7)
\varphi^{\ast}(\eta_2\eta_3\eta_4\eta_5\eta_7\eta_{10}) \varphi^{+}(\eta_2\eta_3\eta_4\eta_5\eta_7\eta_{10}) \textrm{.}
\end{eqnarray*}
Bearing in mind that our next step is to sum over $\eta_7$, we set
\begin{eqnarray*}
\theta(\boldsymbol{\eta}',\eta_7) & = & \theta_1(\boldsymbol{\eta}') \theta_2(\boldsymbol{\eta}',\eta_7) \textrm{,}
\end{eqnarray*}
where
\begin{eqnarray}
\label{theta1}
\theta_1(\boldsymbol{\eta}') & = & \varphi^{\ast}(\eta_2\eta_3\eta_5) \varphi^{\ast}(\eta_2\eta_3\eta_4\eta_5\eta_{10}) \varphi^{+}(\eta_2\eta_3\eta_4\eta_5\eta_{10}) \textrm{,}
\end{eqnarray}
and
\begin{eqnarray}
\label{theta2}
\theta_2(\boldsymbol{\eta}',\eta_7) & = & \frac{\varphi^{\ast}(\eta_7)^2}{\varphi^{\ast}(\gcd(\eta_7, \eta_3 \eta_5))^2}
\frac{\varphi^{+}(\eta_7)}{\varphi^{+}(\gcd(\eta_7, \eta_3 \eta_5))} \textrm{.}
\end{eqnarray}
This completes the proof of lemma \ref{summations 6, 8}.

The following handlings aim to remove the conditions $t_6 Y_6 \geq \mathcal{L}$ (more exactly replace it by $t_6 > 0$) and
$|t_8| Y_8 \geq \mathcal{L}$ from the integral defining $g_2$ in the main term of $N(\boldsymbol{\eta}',\eta_7,B)$. This will have for effect to replace $g_2 \left( \eta_7 / Y_7, \kappa; \boldsymbol{\eta}', B \right)$ by $g_3 \left( \eta_7 / Y_7, \kappa \right)$. For short, we set
\begin{eqnarray*}
D_h(t_7) & = & \left\{ (t_8,t_6) \in \mathbb{R}^2, t_6 >0, h(t_8,t_6,t_7) \leq 1 \right\} \textrm{.}
\end{eqnarray*}
 
\begin{lemma}
\label{D_h lemma}
For $t_7 > 0$ and $Z_6, Z_8 > 0$, we have
\begin{eqnarray}
\label{Z_6}
\meas \{ (t_8,t_6) \in D_h(t_7), t_6 Z_6 \geq 1 \} & \ll & Z_6 \textrm{,} \\
\label{Z_6'}
\meas \{ (t_8,t_6) \in D_h(t_7), t_6 Z_6 < 1 \} & \ll & Z_6^{-1} \textrm{,} \\
\label{Z_8}
\meas \{ (t_8,t_6) \in D_h(t_7), |t_8| Z_8 < 1 \} & \ll & Z_8^{-2/3} \textrm{.}
\end{eqnarray}
\end{lemma}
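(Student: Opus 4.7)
The plan is to integrate out one of the two variables using Lemma \ref{elementary}, then to integrate the resulting section-measure bound over the other variable. Throughout I may assume $t_7 \in (0,1]$, since otherwise $D_h(t_7)$ is empty.

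The key pointwise bound is on the $t_8$-sections. For fixed $t_6 > 0$, the inequality $t_6^{2}|t_7^2 + t_6 t_8| \leq 1$ can be rewritten as $|t_6 t_8 + t_7^2| \leq t_6^{-2}$, which cuts out an interval in $t_8$ of length $2 t_6^{-3}$. Intersecting with the condition $|t_8| \leq 1$ coming from $h \leq 1$ yields
\begin{equation*}
\meas \{ t_8 : (t_8,t_6) \in D_h(t_7)\} \ \ll\ \min(1,\, t_6^{-3}).
\end{equation*}

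For \eqref{Z_6}, I would integrate this bound over $t_6 \geq Z_6^{-1}$, splitting according to whether $Z_6 \geq 1$ or not. When $Z_6 \leq 1$, the range is $t_6 \geq Z_6^{-1} \geq 1$, so the bound is $\int_{Z_6^{-1}}^{\infty} t_6^{-3} \D t_6 \ll Z_6^{2} \leq Z_6$; when $Z_6 \geq 1$, split at $t_6 = 1$: the piece $[Z_6^{-1},1]$ contributes $O(1) = O(Z_6)$ and the tail $[1,\infty)$ contributes $O(1) \leq O(Z_6)$. For \eqref{Z_6'} the range is $0 < t_6 \leq Z_6^{-1}$; again split at $t_6 = 1$ and use the bound $1$ on $(0,\min(1,Z_6^{-1})]$ and $t_6^{-3}$ on $[1,Z_6^{-1}]$ when applicable, yielding $O(Z_6^{-1})$ in all cases.

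For \eqref{Z_8}, the roles are swapped: fix $t_8$ with $0 < |t_8| < Z_8^{-1}$ (a set of $t_8 = 0$ being negligible) and bound the $t_6$-section. Writing the defining inequality as $|t_6^3 t_8 + t_6^2 t_7^2| \leq 1$, that is $|t_6^3 + (t_7^2/t_8)\,t_6^2| \leq |t_8|^{-1}$, Lemma \ref{elementary} with $r = 3$, $A = t_7^2/t_8$ and $Y = |t_8|^{-1}$ gives
\begin{equation*}
\meas\{t_6 : (t_8,t_6) \in D_h(t_7)\} \ \leq\ 4|t_8|^{-1/3}.
\end{equation*}
Using also $|t_8| \leq 1$, integration gives
\begin{equation*}
\int_{|t_8| \leq \min(1, Z_8^{-1})} |t_8|^{-1/3}\, \D t_8 \ \ll\ \min(1, Z_8^{-1})^{2/3} \ \leq\ Z_8^{-2/3},
\end{equation*}
which is the desired estimate. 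No step is a real obstacle: the only minor care needed is the case analysis $Z_6 \lessgtr 1$ (respectively $Z_8 \lessgtr 1$), where one has to split the integration range at $t_6 = 1$ (resp.\ use the $|t_8|\le 1$ constraint) so that the bound $\min(1, t_6^{-3})$ is exploited on the right piece.
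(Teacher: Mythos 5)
Your proof is correct and follows essentially the same route as the paper: bound the $t_8$-sections via the constraint $t_6^2|t_7^2+t_6t_8|\leq 1$ together with $|t_8|\leq 1$, and for \eqref{Z_8} bound the $t_6$-sections by $4|t_8|^{-1/3}$ via Lemma~\ref{elementary} with $r=3$. The only cosmetic difference is that the paper collapses your $\min(1,t_6^{-3})$ into the single bound $t_6^{-2}$ (valid for all $t_6>0$), which yields \eqref{Z_6} and \eqref{Z_6'} in one line each without the case split on $Z_6\lessgtr 1$.
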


\begin{proof}
The condition $t_6^2 \left| t_7^2 + t_6 t_8 \right| \leq 1$ proves that the measure of the set where $t_8$ runs over is less or equal to $2 t_6^{-3}$. Since $|t_8| \leq 1$, this measure is also $\ll t_6^{-2}$. Integrating this quantity over $t_6$ using
$t_6 Z_6 \geq 1$ proves \eqref{Z_6}. Since $|t_8| \leq 1$, the bound \eqref{Z_6'} is clear. Finally, since $|t_8|^{-1} \geq 1$ and
$t_6^2 \left| t_7^2 + t_6 t_8 \right| \leq 1$, we can use lemma \ref{elementary} to deduce that the measure of the set where $t_6$ runs over is bounded by $4 |t_8|^{-1/3}$. The bound \eqref{Z_8} immediately follows since $|t_8| Z_8 < 1$.
\end{proof}

The bound \eqref{Z_6} shows that
\begin{eqnarray*}
g_2 \left( \frac{\eta_7}{Y_7}, \kappa; \boldsymbol{\eta}', B \right) & \ll & \frac{Y_6}{\mathcal{L}} \textrm{.}
\end{eqnarray*}
Therefore, if we assume that $Y_6 < \log(B)^A$, we see that the overall contribution of the main term of
$N(\boldsymbol{\eta}',\eta_7,B)$ is
\begin{eqnarray*}
\sum_{\boldsymbol{\eta}', \eta_7} \frac{Y_6^2 Y_8}{\log(B)^{A} \eta_{10}} & \ll &
\sum_{\boldsymbol{\eta}'} \frac{Y_6^2 Y_7 Y_8}{\log(B)^{A} \eta_{10}} \\
& \ll & \sum_{\eta_2,\eta_3,\eta_4,\eta_5,\eta_{10}} \frac{B}{\boldsymbol{\eta}'^{(0,1,1,1,1,1)}} \\
& \ll & B \log(B)^5 \textrm{,}
\end{eqnarray*}
where we have summed over $\eta_7$ using \eqref{height2} and over $\eta_1''$ using $Y_6 < \log(B)^A$. We thereby assume from now on that
\begin{eqnarray}
\label{new Y}
Y_6 & \geq & \log(B)^A \textrm{.}
\end{eqnarray}
The bound \eqref{Z_6'} shows that replacing the condition $t_6 Y_6 \geq \log(B)^A$ by $t_6 > 0$ in the integral defining $g_2$ in the main term of $N(\boldsymbol{\eta}',\eta_7,B)$ yields an error term whose overall contribution is
\begin{eqnarray*}
\sum_{\boldsymbol{\eta}', \eta_7} \frac{\log(B)^A Y_8}{\eta_{10}} & \ll &
\sum_{\boldsymbol{\eta}'} \frac{\log(B)^A Y_7 Y_8}{\eta_{10}} \\
& \ll & \sum_{\eta_2,\eta_3,\eta_4,\eta_5,\eta_{10}} \frac{B}{\boldsymbol{\eta}'^{(0,1,1,1,1,1)}} \\
& \ll & B \log(B)^5 \textrm{,}
\end{eqnarray*}
where we have summed over $\eta_7$ using \eqref{height2} and over $\eta_1''$ using $Y_6 \geq \log(B)^A$. In a similar fashion, the bound \eqref{Z_8} together with the fact that $Y_8 \geq \log(B)^A$ show that removing the condition $|t_8| Y_8 \geq \log(B)^A$ from the integral defining $g_2$ in the main term of $N(\boldsymbol{\eta}',\eta_7,B)$ also creates an error term whose total contribution is $\ll B \log(B)^5 $. As already said, we have therefore replaced $g_2 \left( \eta_7 / Y_7, \kappa; \boldsymbol{\eta}', B \right)$ by $g_3 \left( \eta_7 / Y_7, \kappa \right)$ in the main term of $N(\boldsymbol{\eta}',\eta_7,B)$.

For fixed $\boldsymbol{\eta}' \in \mathcal{V}$ satisfying \eqref{new Y} and the coprimality conditions \eqref{gcd6} and \eqref{gcd7}, let $N'(\boldsymbol{\eta}',B)$ be the sum of the main term of $N(\boldsymbol{\eta}',\eta_7,B)$ over $\eta_7$, $\eta_7$ being subject to the height condition \eqref{height2} and to the coprimality condition \eqref{gcd5}. Recall the definition \eqref{Xi} of $\Xi$. We prove the following lemma.

\begin{lemma}
\label{summation 7}
For any fixed $A \geq 10$, we have the estimate
\begin{eqnarray*}
N'(\boldsymbol{\eta}',B) & = & \zeta(2)^{-1} \Xi \frac{B}{\boldsymbol{\eta}'^{(1,1,1,1,1,1)}}
g_4 \left( \kappa \right) \Theta(\boldsymbol{\eta}') + R'(\boldsymbol{\eta}',B) \textrm{,}
\end{eqnarray*}
where $\Theta(\boldsymbol{\eta}')$ is a certain arithmetic function defined in \eqref{Theta} and where
\begin{eqnarray*}
\sum_{\boldsymbol{\eta}'} R'(\boldsymbol{\eta}',B) & \ll & B \log(B)^5 \textrm{.}
\end{eqnarray*}
\end{lemma}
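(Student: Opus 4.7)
The plan is to apply Lemma~\ref{arithmetic preliminary 2'} to the sum over $\eta_7$ of the main term of $N(\boldsymbol{\eta}',\eta_7,B)$ produced by Lemma~\ref{summations 6, 8} (with $g_2$ already replaced by $g_3$). First, by a direct computation of exponents, one checks that $Y_7^3 = B/(\eta_2\eta_3^2\eta_4\eta_5^2)$, so the height condition \eqref{height2} is exactly $\eta_7 \leq Y_7$. Moreover, setting $a = \eta_3\eta_5$ and $b = \eta_2\eta_4\eta_{10}$ and comparing with the definition of $\psi'_{a,b}$ given in section~\ref{arithmetic section}, the $\eta_7$-dependent arithmetic piece together with the coprimality condition~\eqref{gcd5} is precisely $\psi'_{\eta_3\eta_5,\,\eta_2\eta_4\eta_{10}}(\eta_7)$. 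Since $Y_6$, $Y_7$, $Y_8$ and $\kappa$ do not depend on $\eta_7$, the sum to estimate is
\begin{eqnarray*}
\sum_{\eta_7 \leq Y_7} \psi'_{\eta_3\eta_5,\,\eta_2\eta_4\eta_{10}}(\eta_7)\, g_3\!\left( \frac{\eta_7}{Y_7},\kappa \right).
\end{eqnarray*}

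Applying Lemma~\ref{arithmetic preliminary 2'} with $I = [0,Y_7]$ and $g(t) = g_3(t/Y_7,\kappa)$, the main term becomes $\Xi\,\Psi'(\eta_3\eta_5,\,\eta_2\eta_4\eta_{10})\,\int_0^{Y_7} g_3(t/Y_7,\kappa)\,\D t$. The substitution $u = t/Y_7$ converts this integral into $Y_7 \int_0^1 g_3(u,\kappa)\,\D u$, and since the condition $h(t_8,t_6,t_7) \leq 1$ forces $t_7 \leq 1$, the remaining integral coincides with $g_4(\kappa)$. Multiplying by the prefactor $\zeta(2)^{-1}(Y_6Y_8/\eta_{10})\,\theta_1(\boldsymbol{\eta}')$ from Lemma~\ref{summations 6, 8} and using the exponent identity $Y_6Y_7Y_8/\eta_{10} = B/\boldsymbol{\eta}'^{(1,1,1,1,1,1)}$, one obtains exactly the claimed main term upon setting
\begin{eqnarray}
\label{Theta}
\Theta(\boldsymbol{\eta}') & = & \theta_1(\boldsymbol{\eta}') \Psi'(\eta_3\eta_5,\eta_2\eta_4\eta_{10}).
\end{eqnarray}
Unpacking the definitions of $\theta_1$, $\Psi'$ and the arithmetic functions $\varphi^{\ast}$, $\varphi^{+}$, $\varphi^{\times}$ gives an explicit product formula for $\Theta$.

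It remains to control the error produced by Lemma~\ref{arithmetic preliminary 2'}, which is $O\bigl(\sigma_{-\gamma/2}(\eta_2\eta_4\eta_{10})\,Y_7^{\gamma}\,M_I(g)\bigr)$ for any fixed $0 < \gamma \leq 1$. Using the constraints $|t_8| \leq 1$ and $t_6^2|t_7^2 + t_6t_8| \leq 1$ (as in the proof of Lemma~\ref{bounds} and in Lemma~\ref{D_h lemma}) yields $\sup_{u \in [0,1]} g_3(u,\kappa) \ll 1$, while $u \mapsto g_3(u,\kappa)$ is piecewise smooth with its derivative changing sign a bounded number of times because the boundary of its defining region is cut out by a fixed finite collection of polynomial inequalities in $u$; hence $M_I(g) \ll 1$. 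Multiplying by $Y_6Y_8/\eta_{10}\cdot \theta_1(\boldsymbol{\eta}')$ and bounding $\theta_1 \ll 1$, the error per $\boldsymbol{\eta}'$ is $\ll \sigma_{-\gamma/2}(\eta_2\eta_4\eta_{10}) \cdot B\bigl/\bigl(Y_7^{1-\gamma}\boldsymbol{\eta}'^{(1,1,1,1,1,1)}\bigr)$. Summing over $\eta_1''$ (bounded by $Y_8 \geq \log(B)^A$) and over $\eta_{10}$ (bounded by \eqref{new}) each contributes a factor $\log(B)$, and the remaining summation over $\eta_2,\eta_3,\eta_4,\eta_5$ subject to $\eta_2\eta_3^2\eta_4\eta_5^2 \leq B$ can be handled by iterated partial summation; the exponents assemble to give a total $\ll B\log(B)^5$ for any fixed $\gamma \in (0,1)$, using that $\sigma_{-\gamma/2}$ has average order $O(1)$. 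The technically delicate point is the uniform bound $R_g(I) = O(1)$, which needs a geometric argument but does not depend on any of the arithmetic data.
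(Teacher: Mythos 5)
Your proof is correct and follows the same route as the paper: you identify the $\eta_7$-dependent weight $\theta_2(\boldsymbol{\eta}',\eta_7)$ together with the coprimality condition \eqref{gcd5} as $\psi'_{\eta_3\eta_5,\,\eta_2\eta_4\eta_{10}}(\eta_7)$, invoke Lemma \ref{arithmetic preliminary 2'}, recognize the resulting integral as $Y_7\,g_4(\kappa)$ (since $h \leq 1$ forces $t_7 \leq 1$), and obtain $\Theta(\boldsymbol{\eta}') = \theta_1(\boldsymbol{\eta}')\,\varphi^{\ast}(\eta_2\eta_4\eta_{10})\,\varphi^{\times}(\eta_2\eta_3\eta_4\eta_5\eta_{10})$, which agrees with \eqref{Theta}. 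The paper simply fixes $\gamma = 1/2$ and sums over $\eta_2$ first using $Y_7 \geq 1$ to bound the error, whereas you leave $\gamma$ free and invoke iterated partial summation under $\eta_2\eta_3^2\eta_4\eta_5^2 \leq B$; both yield $\ll B\log(B)^5$, and your explicit remark that $R_g(I) = O(1)$ fills in a point the paper leaves implicit.
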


Recall the definition \eqref{varphi times} of $\varphi^{\times}$. Using lemma \ref{arithmetic preliminary 2'} to sum over $\eta_7$, we see that for any fixed $A \geq 10$ and $0 < \gamma \leq 1$, we have
\begin{eqnarray*}
N'(\boldsymbol{\eta}',B) & = & \zeta(2)^{-1} \Xi \frac{Y_6 Y_7 Y_8}{\eta_{10}}
g_4 \left( \kappa \right) \theta_1(\boldsymbol{\eta}') \varphi^{\ast}(\eta_2 \eta_4 \eta_{10})
\varphi^{\times}( \eta_2 \eta_3 \eta_4 \eta_5 \eta_{10}) \\
& & + O \left( \frac{Y_6 Y_8}{\eta_{10}} Y_7^{\gamma} \sigma_{- \gamma/2}(\eta_2 \eta_4 \eta_{10})
\sup_{t_7 > 0} g_3 \left( t_7, \kappa \right) \right) \textrm{.}
\end{eqnarray*}
Since $g_3$ obviously satisfies the same bound as $g_2$ in lemma \ref{bounds}, we have
\begin{eqnarray*}
\sup_{t_7 > 0} g_3 \left( t_7, \kappa \right) & \ll &  1 \textrm{.}
\end{eqnarray*}
Let us choose for instance $\gamma = 1/2$. As already explained, since $\sigma_{- 1/4}$ has average order $O(1)$, it can be ignored in the estimation of the total contribution of the error term. This contribution is therefore seen to be
\begin{eqnarray*}
\sum_{\boldsymbol{\eta}'} \frac{Y_6 Y_8 Y_7^{1/2}}{\eta_{10}} & \ll &
\sum_{\eta_1'',\eta_3, \eta_4, \eta_5, \eta_{10}} \frac{B}{\boldsymbol{\eta}'^{(1,0,1,1,1,1)}} \\
& \ll & B \log(B)^5 \textrm{,}
\end{eqnarray*}
where we have summed over $\eta_2$ using $Y_7 \geq 1$. Recalling that 
\begin{eqnarray*}
\frac{Y_6 Y_7 Y_8}{\eta_{10}} & = & \frac{B}{\boldsymbol{\eta}'^{(1,1,1,1,1,1)}} \textrm{,}
\end{eqnarray*}
and setting
\begin{eqnarray}
\label{Theta}
\Theta(\boldsymbol{\eta}') & = & \theta_1(\boldsymbol{\eta}') \varphi^{\ast}(\eta_2 \eta_4 \eta_{10})
\varphi^{\times}( \eta_2 \eta_3 \eta_4 \eta_5 \eta_{10}) \textrm{,}
\end{eqnarray}
we see that we have completed the proof of lemma \ref{summation 7}.

Our next goal is to replace $g_4(\kappa)$ by $g_4(0)$ in the main term of $N'(\boldsymbol{\eta}',B)$ in lemma~\ref{summation 7}. It is convenient to set 
\begin{eqnarray*}
D_h & = & \left\{ (t_8,t_6,t_7) \in \mathbb{R}^3, t_6,t_7>0, h(t_8,t_6,t_7) \leq 1 \right\} \textrm{.}
\end{eqnarray*}

\begin{lemma}
We have the bounds
\begin{eqnarray}
\label{t}
\meas \{ (t_8,t_6,t_7) \in D_h, t \leq \left| t_6 t_8 + t_7^2 \right| \} & \ll & t^{-1/2} \textrm{,} \\
\label{t'}
\meas \{ (t_8,t_6,t_7) \in D_h, t > \left| t_6 t_8 + t_7^2 \right| \} & \ll & t^{1/2} \textrm{.}
\end{eqnarray}
\end{lemma}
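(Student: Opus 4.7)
My plan is to bound both measures by integrating over $t_8$ first with $t_6 > 0$ and $t_7 \in (0,1]$ fixed, then over the remaining two variables (noting that integration over $t_7$ contributes at most a factor of $1$ since $t_7 \leq 1$). Set $u = t_6 t_8 + t_7^2$. The condition $h(t_8, t_6, t_7) \leq 1$ supplies three inequalities I will use: $|t_8| \leq 1$, $t_7 \leq 1$, and $t_6^2 |u| \leq 1$.

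For \eqref{t}, the hypothesis $t \leq |u|$ combined with $t_6^2 |u| \leq 1$ forces $t_6 \leq t^{-1/2}$. For fixed $t_6, t_7$ the admissible $t_8$-measure is at most $\min(2, 2 t_6^{-3})$, coming respectively from $|t_8| \leq 1$ and from the fact that $|u| \leq t_6^{-2}$ cuts out an interval of length $2 t_6^{-3}$ in $t_8$ (since $u$ is affine in $t_8$ with slope $t_6$). The remaining one-dimensional integral $\int_0^{t^{-1/2}} \min(2, 2 t_6^{-3}) \D t_6$ is handled by splitting at $t_6 = 1$ when $t < 1$ and is easily shown to be $\ll t^{-1/2}$ in both regimes.

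For \eqref{t'}, for fixed $t_6, t_7$ the admissible $t_8$ lies in the intersection of three intervals coming respectively from $|t_8| \leq 1$, $|u| \leq t_6^{-2}$, and $|u| < t$; its measure is therefore at most $2 \min(1, t_6^{-3}, t / t_6)$. To bound $\int_0^\infty 2 \min(1, t_6^{-3}, t / t_6) \D t_6$ I would use the natural breakpoints $t_6 = t$ (where $t/t_6 = 1$), $t_6 = 1$ (where $t_6^{-3} = 1$) and $t_6 = t^{-1/2}$ (where $t/t_6 = t_6^{-3}$), distinguishing the cases $t \geq 1$ and $t < 1$. When $t \geq 1$ the integral is bounded by a constant, which is $\ll t^{1/2}$. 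When $t < 1$ the dominant piece is $\int_t^{t^{-1/2}} (2 t / t_6) \D t_6 \ll t \log(1/t)$, which is $\ll t^{1/2}$, while the remaining pieces contribute only $O(t)$.

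The argument is essentially bookkeeping and no step presents a real obstacle. The one subtlety in \eqref{t'} is that all three of the bounds $1$, $t_6^{-3}$ and $t / t_6$ on the $t_8$-measure are genuinely needed: the bound $1$ ensures convergence near $t_6 = 0$, the bound $t_6^{-3}$ ensures convergence near $t_6 = \infty$, and the intermediate bound $t / t_6$ supplies the saving of a power of $t$ on the middle range.
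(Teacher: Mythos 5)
Your proof is correct, but it takes a genuinely different route from the paper's, most visibly in \eqref{t'}. The paper fixes $t_8, t_7$ and integrates out $t_6$ first: by lemma \ref{elementary} (applied with $r = 3$ as in the proof of lemma \ref{D_h lemma}), the $t_6$-measure inside $D_h$ is $\leq 4|t_8|^{-1/3}$, while the extra constraint $|t_6 t_8 + t_7^2| < t$ gives the independent bound $4 t |t_8|^{-1}$; combining these via $\min(a,b) \leq \sqrt{ab}$ yields $4 t^{1/2} |t_8|^{-2/3}$, and $\int_{|t_8|\leq 1} |t_8|^{-2/3}\, \D t_8 < \infty$ finishes the job. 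You instead fix $t_6, t_7$, integrate out $t_8$ first with the measure $\leq 2\min(1, t_6^{-3}, t/t_6)$, and evaluate the resulting $t_6$-integral by splitting at the natural breakpoints $t_6 = t$, $t_6 = t^{-1/2}$, obtaining a dominant contribution $\ll t\log(1/t) \ll t^{1/2}$. Your route avoids lemma \ref{elementary} and the geometric-mean trick altogether, replacing a slick one-line bound by a more hands-on case analysis; the paper's approach is shorter and makes more visible use of the tools it set up earlier. For \eqref{t} the two arguments agree in substance: both rest on the implication $t_6 \leq t^{-1/2}$, the paper then simply bounding the $t_8$- and $t_7$-ranges by constants while you compute the $t_8$-range a bit more precisely (which is not needed for the stated bound). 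One small remark: your integrals over $t_6$ for \eqref{t} and \eqref{t'} actually evaluate to $O(1)$ uniformly in $t$ for $t \leq 1$, which is of course fine since $t^{-1/2} \geq 1$ there; the bound $t^{-1/2}$ is only informative for $t \geq 1$, which is precisely the regime ($\kappa > 1$) in which the paper uses it.
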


\begin{proof}
We note that $t \leq \left| t_6 t_8 + t_7^2 \right|$ and $t_6^2 \left| t_6 t_8 + t_7^2 \right| \leq 1$ imply that $t_6^2 t \leq 1$. This proves the first bound since $t_7, |t_8| \leq 1$. In addition, as already said in the proof of lemma \ref{D_h lemma}, the measure of the set where $t_6$ runs over is less or equal to $4 |t_8|^{-1/3}$. Using the condition $t > \left| t_6 t_8 + t_7^2 \right|$, we see that this measure is also less or equal to $4 \min \left( |t_8|^{-1/3}, t |t_8|^{-1} \right) \leq 4 t^{1/2} |t_8|^{-2/3}$. Integrating over $t_7, |t_8| \leq 1$ completes the proof.
\end{proof}

We see that if we assume that $\kappa > 1$ then the bound \eqref{t} allows us to prove that the contribution of the main term of $N'(\boldsymbol{\eta}',B)$ is $\ll B \log(B)^5 $. We therefore assume from now on that
$\kappa \leq 1$, namely
\begin{eqnarray}
\label{new kappa}
\eta_3 \eta_5 Y_7^2& \geq & \eta_{10}^2 \textrm{.}
\end{eqnarray}
The bound \eqref{t'} therefore shows that replacing $g_4(\kappa)$ by $g_4(0)$ in the main term of $N'(\boldsymbol{\eta}',B)$ in lemma \ref{summation 7} creates an error term whose overall contribution is $\ll~B \log(B)^5$. Recalling the equality \eqref{omega}, we see that we have replaced $g_4(\kappa)$ in the main term of $N'(\boldsymbol{\eta}',B)$ in lemma \ref{summation 7} by
\begin{eqnarray*}
\int \int \int_{t_6, t_7 > 0, h(t_8,t_6,t_7) \leq 1} \D t_8 \D t_6 \D t_7 & = & \frac{\omega_{\infty}}{12} \textrm{.}
\end{eqnarray*}

\subsection{Conclusion}

Recall the definition \eqref{V} of $\mathcal{V}$. It remains to sum the main term of $N'(\boldsymbol{\eta}',B)$ over the $\boldsymbol{\eta}' \in \mathcal{V}$ satisfying \eqref{new Y} and \eqref{new kappa} and the coprimality conditions \eqref{gcd6} and \eqref{gcd7}. It is easy to see that replacing
$\left\{ \boldsymbol{\eta}' \in \mathcal{V}, \eqref{new Y}, \eqref{new kappa} \right\}$ by the region 
\begin{eqnarray*}
\mathcal{V}' & = &  \left\{ \boldsymbol{\eta}' \in \mathbb{Z}_{>0}^6, Y_6 \geq 1, Y_7 \geq 1, Y_8 \geq 1, 
\eta_3 \eta_5 Y_7^2 \geq \eta_{10}^2 \right\} \textrm{,}
\end{eqnarray*}
produces an error term whose overall contribution is $\ll B \log(B)^5 \log(\log(B))$. We redefine the arithmetic function $\Theta$ as being equal to zero if the remaining coprimality conditions \eqref{gcd6} and \eqref{gcd7} are not satisfied. Putting together these remarks and the estimate \eqref{estimate N_{U,H}} and fixing for example $A = 10$, we obtain the following lemma.

\begin{lemma}
\label{final lemma}
We have the estimate
\begin{eqnarray*}
N_{U,H}(B) & = & \zeta(2)^{-1} \Xi \frac{\omega_{\infty}}{3} B
\sum_{\boldsymbol{\eta}' \in \mathcal{V}'} \frac{\Theta(\boldsymbol{\eta}')}{\boldsymbol{\eta}'^{(1,1,1,1,1,1)}}
+ O \left( B \log(B)^5 \log(\log(B)) \right) \textrm{.}
\end{eqnarray*}
\end{lemma}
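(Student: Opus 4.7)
The plan is to assemble lemma \ref{final lemma} by chaining together the results of section \ref{summations section}, with careful bookkeeping of the error terms. No new analytic input is needed.

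First, I would start from the estimate \eqref{estimate N_{U,H}} specialised to $A = 10$, which expresses $N_{U,H}(B)$ as $4$ times the double sum of $N(\boldsymbol{\eta}', \eta_7, B)$ over $\boldsymbol{\eta}' \in \mathcal{V}$ and $\eta_7$ satisfying the relevant coprimality and height conditions, plus an error of the desired size $O(B \log(B)^5 \log(\log(B)))$. Then I would invoke lemma \ref{summations 6, 8} to replace each $N(\boldsymbol{\eta}', \eta_7, B)$ by its main term
\[
\zeta(2)^{-1} \frac{Y_6 Y_8}{\eta_{10}} g_2\!\left(\tfrac{\eta_7}{Y_7}, \kappa; \boldsymbol{\eta}', B\right) \theta_1(\boldsymbol{\eta}') \theta_2(\boldsymbol{\eta}', \eta_7),
\]
the aggregated remainder $R(\boldsymbol{\eta}',\eta_7,B)$ being bounded by $O(B \log(B)^5)$.

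Next, using the bounds \eqref{Z_6}, \eqref{Z_6'} and \eqref{Z_8} of lemma \ref{D_h lemma}, I would follow the argument immediately after lemma \ref{summations 6, 8}: first restrict to $Y_6 \geq \log(B)^A$, i.e.\ to the condition \eqref{new Y} (the complementary contribution being $O(B \log(B)^5)$), then remove the cutoffs $t_6 Y_6 \geq \mathcal{L}$ and $|t_8| Y_8 \geq \mathcal{L}$ from the integral defining $g_2$, each at cost $O(B \log(B)^5)$. The net effect is to replace $g_2(\eta_7/Y_7, \kappa; \boldsymbol{\eta}', B)$ by $g_3(\eta_7/Y_7, \kappa)$. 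Lemma \ref{summation 7} then allows one to sum over $\eta_7$, producing the main term
\[
\zeta(2)^{-1} \Xi \frac{B}{\boldsymbol{\eta}'^{(1,1,1,1,1,1)}} g_4(\kappa) \Theta(\boldsymbol{\eta}'),
\]
with aggregated remainder $O(B \log(B)^5)$.

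It then remains to replace $g_4(\kappa)$ by $g_4(0)$ in the resulting main term. Using the bounds \eqref{t} and \eqref{t'} together with the restriction \eqref{new kappa}, the cost of both the truncation $\kappa \leq 1$ and the subsequent replacement $g_4(\kappa) \to g_4(0)$ is $O(B \log(B)^5)$, as already noted in the excerpt. Recalling the definition \eqref{omega} of $\omega_\infty$ gives $g_4(0) = \omega_\infty/12$. Finally, I would enlarge the domain of summation from $\{\boldsymbol{\eta}' \in \mathcal{V}, \eqref{new Y}, \eqref{new kappa}\}$ to the region $\mathcal{V}'$ (dropping the $\log(B)^A$ lower bounds on $Y_6, Y_8$ in favour of $\geq 1$, and keeping the constraint $\eta_3 \eta_5 Y_7^2 \geq \eta_{10}^2$), at cost $O(B \log(B)^5 \log(\log(B)))$ by a direct estimate analogous to those already performed. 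The constant factor $4 \cdot \omega_\infty/12 = \omega_\infty/3$ produces the announced leading constant.

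The main \emph{difficulty} here is not conceptual but organisational: one must verify that every error term accumulated through the successive approximations stays within the prescribed bound $O(B \log(B)^5 \log(\log(B)))$, and that the arithmetic function $\Theta$ is extended by zero appropriately so that the final sum is taken over $\mathcal{V}'$ without extra coprimality prefactors. Once these checks are carried out, the statement of lemma \ref{final lemma} follows directly.
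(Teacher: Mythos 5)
Your proposal is correct and follows exactly the same path as the paper: starting from \eqref{estimate N_{U,H}} with $A=10$, chaining lemmas \ref{summations 6, 8}, \ref{D_h lemma}, and \ref{summation 7}, then the $g_4(\kappa)\to g_4(0)=\omega_\infty/12$ replacement, and finally enlarging the domain to $\mathcal{V}'$ with $\Theta$ extended by zero, all with the same error accounting. No meaningful difference from the paper's argument.
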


The end of the paper is dedicated to the completion of the proof of theorem \ref{Manin}. Let us introduce the generalized Möbius function  $\boldsymbol{\mu}$ defined for $(n_1, \dots, n_6) \in \mathbb{Z}_{>0}^6$ by
$\boldsymbol{\mu}(n_1, \dots, n_6) = \mu(n_1) \cdots \mu(n_6)$. We set $\mathbf{k} = (k_1,k_2,k_3,k_4,k_5,k_{10})$ and we define for $s \in \mathbb{C}$ such that $\Re(s) > 1$,
\begin{eqnarray*}
F(s) & = & \sum_{\boldsymbol{\eta}' \in \mathbb{Z}_{>0}^6}
\frac{\left|(\Theta \ast \boldsymbol{\mu})(\boldsymbol{\eta}')\right|}{\eta_1''^s \eta_2^s \eta_3^s \eta_4^s \eta_5^s \eta_{10}^s} \\
& = & \prod_p \left( \sum_{\mathbf{k} \in \mathbb{Z}_{\geq 0}^6}
\frac{\left|(\Theta \ast \boldsymbol{\mu}) \left( p^{k_1},p^{k_2},p^{k_3},p^{k_4},p^{k_5}, p^{k_{10}} \right)\right|}
{p^{k_1 s}p^{k_2 s}p^{k_3 s}p^{k_4 s}p^{k_5 s}p^{k_{10} s}} \right) \textrm{.}
\end{eqnarray*}
It is easy to check that if $\mathbf{k} \notin \{0,1\}^6$ then
$(\Theta \ast \boldsymbol{\mu}) \left( p^{k_1},p^{k_2},p^{k_3},p^{k_4},p^{k_5},p^{k_{10}} \right) = 0$ and if exactly one of the $k_i$ is equal to $1$, then
$(\Theta \ast \boldsymbol{\mu}) \left( p^{k_1},p^{k_2},p^{k_3},p^{k_4},p^{k_5}, p^{k_{10}} \right) \ll 1/p$, so the local factors $F_p$ of $F$ satisfy
\begin{eqnarray*}
F_p(s) & = & 1 + O \left( \frac1{p^{ \min \left( \Re(s)+1, 2 \Re(s) \right)}} \right) \textrm{.}
\end{eqnarray*}
This proves that the function $F$ converges in the half-plane $\Re(s) > 1/2$. This clearly implies that $\Theta$ satifies the assumption of \cite[Lemma $8$]{3A1}. The application of this lemma provides
\begin{eqnarray}
\label{sum1}
\ \ \ \ \ \ \ \sum_{\boldsymbol{\eta}' \in \mathcal{V}'} \frac{\Theta(\boldsymbol{\eta}')}{\boldsymbol{\eta}'^{(1,1,1,1,1,1)}} & = & \alpha \left( \sum_{\boldsymbol{\eta}' \in \mathbb{Z}_{>0}^6}
\frac{(\Theta \ast \boldsymbol{\mu})(\boldsymbol{\eta}')}{\boldsymbol{\eta}'^{(1,1,1,1,1,1)}} \right) \log(B)^6 + O \left( \log(B)^5 \right) \textrm{,}
\end{eqnarray}
where $\alpha$ is the volume of the polytope defined in $\mathbb{R}^6$ by $t_1,t_2,t_3,t_4,t_5,t_{10} \geq 0$ and
\begin{eqnarray*}
3t_1 - 2 t_2 - t_3 + 4 t_4 + 2 t_5 - 3 t_{10} & \leq & 1 \textrm{,} \\
t_2 + 2 t_3 + t_4 + 2 t_5 & \leq & 1 \textrm{,} \\
t_1 + 2 t_2 + t_3 + t_{10} & \leq & 1 \textrm{,} \\
2 t_2 + t_3 + 2 t_4  + t_5 + 6 t_{10} & \leq & 2 \textrm{.}
\end{eqnarray*}
The computation of $\alpha$ can be achieved using Franz's additional \textit{Maple} package Convex \cite{Convex} and we find
$\alpha = 1/2880$, that is to say
\begin{eqnarray}
\label{alpha}
\alpha & = & 3 \alpha(\widetilde{V}) \textrm{.}
\end{eqnarray}
Furthermore, since $\Theta(\boldsymbol{\eta}')$ is independent of $\eta_1''$, setting
$\mathbf{k}' = (k_2,k_3,k_4,k_5,k_{10})$, we have
\begin{eqnarray*}
\sum_{\boldsymbol{\eta}' \in \mathbb{Z}_{>0}^6} \frac{(\Theta \ast \boldsymbol{\mu}) (\boldsymbol{\eta}')}{\boldsymbol{\eta}'^{(1,1,1,1,1,1)}} & = & \prod_p \left( \sum_{\mathbf{k} \in \mathbb{Z}_{\geq 0}^6}
\frac{(\Theta \ast \boldsymbol{\mu}) \left( p^{k_1},p^{k_2},p^{k_3},p^{k_4},p^{k_5},p^{k_{10}} \right)}{p^{k_1}p^{k_2}p^{k_3}p^{k_4}p^{k_5}p^{k_{10}}} \right) \\
& = & \prod_p \left( 1 - \frac1{p} \right)^5 \left( \sum_{\mathbf{k}' \in \mathbb{Z}_{\geq 0}^5} 
\frac{\Theta \left( 1,p^{k_2},p^{k_3},p^{k_4},p^{k_5},p^{k_{10}} \right)}{p^{k_2}p^{k_3}p^{k_4}p^{k_5}p^{k_{10}}} \right) \textrm{.}
\end{eqnarray*}
The calculation of these local factors is long but straightforward and we find
\begin{eqnarray*}
\sum_{\mathbf{k}' \in \mathbb{Z}_{\geq 0}^5}
\frac{\Theta \left( 1,p^{k_2},p^{k_3},p^{k_4},p^{k_5},p^{k_{10}} \right)}{p^{k_2}p^{k_3}p^{k_4}p^{k_5}p^{k_{10}}} & = &
\varphi^{+}(p) \varphi^{\times}(p) \left( 1 - \frac1{p} \right) \left( 1 + \frac{7}{p} + \frac1{p^2} \right) \textrm{.}
\end{eqnarray*}
Since
\begin{eqnarray*}
\varphi^{+}(p) & = & \left( 1 - \frac1{p^2} \right)^{-1} \left( 1 - \frac1{p} \right) \textrm{,}
\end{eqnarray*}
we finally get
\begin{eqnarray}
\label{sum2}
\sum_{\boldsymbol{\eta}' \in \mathbb{Z}_{>0}^6} \frac{(\Theta \ast \boldsymbol{\mu}) (\boldsymbol{\eta}')}{\boldsymbol{\eta}'^{(1,1,1,1,1,1)}} & = & \zeta(2) \Xi^{-1} \prod_p \left( 1 - \frac1{p} \right)^7
 \omega_p \textrm{.}
\end{eqnarray}
Putting together the equalities \eqref{sum1}, \eqref{alpha}, \eqref{sum2} and lemma \ref{final lemma} instantly completes the proof of theorem \ref{Manin}.

\bibliographystyle{is-alpha}
\bibliography{biblio}

\end{document}